\def\hlinewd#1{%
  \noalign{\ifnum0=`}\fi\hrule \@height #1 \futurelet
   \reserved@a\@xhline}
  \newcounter{para}
\newcommand\mypara[1]{\par\refstepcounter{para}\textit{\thepara)\space#1\space}}
\crefname{hypothesis}{Hypothesis}{Hypotheses}
\title{On the Closed Form Expression of Elementary Symmetric Polynomials and the Inverse of Vandermonde Matrix}
\author{Mahdi S. Hosseini\thanks{The Edward S. Rogers Sr. Department of Electrical \& Computer Engineering , University of Toronto, ON M5S 3G4, Ontario, Canada (\email{mahdi.hosseini@mail.utoronto.ca}, \email{lixiang.chen@mail.utoronto.ca}, \email{kostas@ece.utoronto.ca}).}
\and Alfred Chen\footnotemark[1]
\and Konstantinos N. Plataniotis\footnotemark[1]}
\begin{document}

\maketitle

% REQUIRED
\begin{abstract}
Inverse Vandermonde matrix calculation is a long-standing problem to solve nonsingular linear system $Vc=b$ where the rows of a square matrix $V$ are constructed by progression of the power polynomials. It has many applications in scientific computing including interpolation, super-resolution, and construction of special matrices applied in cryptography. Despite its numerous applications, the matrix is highly ill-conditioned where specialized treatments are considered for approximation such as conversion to Cauchy matrix, spectral decomposition, and algorithmic tailoring of the numerical solutions. In this paper, we propose a generalized algorithm that takes arbitrary pairwise (non-repetitive) sample nodes for solving inverse Vandermonde matrix. This is done in two steps: first, a highly balanced recursive algorithm is introduced with $\mathcal{O}(N)$ complexity to solve the combinatorics summation of the elementary symmetric polynomials; and second, a closed-form solution is tailored for inverse Vandermonde where the matrix' elements utilize this recursive summation for the inverse calculations. The numerical stability and accuracy of the proposed inverse method is analyzed through the spectral decomposition of the Frobenius companion matrix that associates with the corresponding Vandermonde matrix. The results show significant improvement over the state-of-the-art solutions using specific nodes such as $N$th roots of unity defined on the complex plane. A basic application in one dimensional interpolation problem is considered to demonstrate the utility of the proposed method for super-resolved signals.
\end{abstract}

% REQUIRED
\begin{keywords}
Elementary symmetric polynomials, inverse Vandermonde matrix, ill-conditioned linear system, $N$th roots of unity, generalized sampling nodes
\end{keywords}

% REQUIRED
\begin{AMS}
05E05, 11C08, 11C20, 15B05, 15A09, 65F05
% 05E05: Combinatorics ==> Algebraic combinatorics ==> Symmetric functions and generalizations
% 11C08: Number theory ==> Polynomials and matrices ==> Polynomials
% 11C20: Number theory ==> Polynomials and matrices ==> Matrices, determinants
% 15B05: Linear and multilinear algebra; matrix theory ==> Special matrices ==> Toeplitz, Cauchy, and related matrices
% 15A09: Linear and multilinear algebra; matrix theory ==> Basic linear algebra ==> Matrix inversion, generalized inverses
% 65F05: Numerical analysis ==> Numerical linear algebra ==> Direct methods for linear systems and matrix inversion
\end{AMS}

\section{Introduction}
The Vandermonde inverse is extensively researched, continued onto this decade \cite{werts_1965,nasa_60,EisinbergFedele2006,Mikkawy_2003,taher_2016}. It appears in many applications such as in matrix pencil method \cite{hua1990matrix, sarkar1995using} where the idea is to approximate signals via a linear decomposition framework whose unknown parameters are the solution to the generalized eigenvalue problem. The super-resolution problem is a variant of this approximation approach where a summation of complex exponential basis functions can be represented by a Vandermonde system of equations \cite{superResolution}. The matrix can be used as a generator matrix in the transformation function of Reed-Solomon codes \cite{solomonCode} as one of the many possible applications. In another venue, the Vandermonde matrix is used to construct distance separable (MDS) matrices by multiplication of two block Vandermonde (variants of partitioned matrices) that are utilized in cryptography such as designing block ciphers and hash functions \cite{sajadieh2012construction, lacan2004systematic, li2018direct, yaici2019particular}. However, the main driver for the appearance of Vandermonde matrices roots back to the applications in polynomial approximation and interpolation \cite{nasa_60}.

Without loss of generality, consider a continuously differentiable function $f(\cdot)$ and a set of real pairwise distinct sampling points $\{v_1,v_2,\ldots,v_N\}$ with the goal of solving for the coefficients of the polynomial $\{c_0,c_1,\ldots,c_{N-1}\}$ in  
\begin{align}
c_0+c_1v_i+c_2v_i^2+\ldots+c_Nv_i^{N-1} = f(v_i),~\text{for}~i\in\{1,2,\cdots,N\} \nonumber
\end{align}
Rewriting in the well known form 
\begin{align}
V^T c = f\nonumber
\end{align}
with the general Vandermonde matrix $V$, coefficient vector $c$, and observation vector $f$ defined as
\begin{align}
V = \begin{bmatrix}
1 & 1 & \cdots & 1\\
v_1 & v_2 & \cdots & v_N\\
\vdots & \vdots &\vdots & \vdots\\
v_1^{N-1} & v_2^{N-1} & \cdots & v_N^{N-1}
\end{bmatrix},~~~
c = 
\begin{bmatrix}
c_0\\c_1\\\vdots\\c_{N-1}
\end{bmatrix}, ~\text{and}~~
f =
\begin{bmatrix}
f(v_1)\\f(v_2)\\\vdots\\f(v_{N})
\end{bmatrix}.\label{eq:variables}
\end{align}
In this definition, the Vandermonde matrix $V$ is defined on a finite field of real pairwise distinct nodes $\{v_1,v_2,\cdots,v_N\}$. Most Vandermonde applications fall into this type of problem. For example, in Reed-Solomon coding the message of length $k$ can be decoded by solving the coefficients of a $k^{th}$ polynomial function at $k+1$ different points--a variant of polynomial approximation problem. The transpose notation $V^T$ is another representation of the Vandermonde matrix \cite{interp_text,EisinbergFedele2006,stirlingInv}. However we chose this Vandermonde form based on the inverse decomposition from \cite{chen1981new,man2014inversion} that is used and analyzed in \cref{sec:main}. For simplicity and flow, the inverse and the Vandermonde inverse are interchangeable in context unless explicitly stated.

The Vandermonde inverse with pairwise distinct nodes always exists because the columns are linearly independent. However, the inverse is generally ill conditioned \cite{ill_posed1,ill_posed2,ill_posed3, bazan2000conditioning}. The condition number of these matrices grow exponentially in $N$ i.e. $\kappa\left(V\right)>2^{N-2}/\sqrt{N}$ \cite{gautschi1975optimally, gautschi1990stable} unless the sample elements are equally spaced on the unit circle on the origin (these samples are roots of unity on the complex plane) \cite{ill_posed3}. Large condition numbers have been observed to subject a digital inversion method (such as the matrix inversion function in MATLAB) to severe round-off errors caused by large floating point arithmetic. The existence of the Vandermonde inverse and ill-conditioning enabled a significant amount of research in finding accurate and fast solutions to the inverse \cite{macon_1958,werts_1965,Mikkawy_2003,Traub_1966,taher_2016,csaki_1975,eisin_1981,EisinbergFedele2006,nasa_60,bjorck_1970,eisinberg_1998,stirlingInv,man2014inversion,pantelous_2013,kaufman_1969,klinger_1967,neagoe_1996,youness_2016}. The stability condition of the Vandermonde matrices with different sampling nodes is also of broad interest in super-resolution problem to analyze the recovery conditions in different sampling scenarios that is mainly contaminated with noise \cite{li2017stable, kunis2018condition, batenkov2018conditioning, batenkov2019super, batenkov2019rethinking}. The Vandermonde matrix can be also converted into Cauchy matrix representation via discrete Fourier transform (DFT) to overcome the issue of ill-posedness \cite{demmel2000accurate, drmavc2018data}. 

A function commonly used in the majority of competitive Vandermonde inversion algorithms is the \textit{elementary symmetric polynomials (ESP)}, which is a specific sum of products without permutation of repetitions and defined by
\begin{align}
 \sigma_{N,j} = \sum\limits_{r_1\neq r_2\neq\cdots\neq r_j}{v_{r_1}v_{r_2}\cdots v_{r_j}},\label{eq:elem_func}
 \end{align}
where, $\sigma_{N,j}$ is called the j'th ESP over the roots/samples $\{v_1,v_2,\ldots,v_N\}$ \cite{dick_1945}. This notation is used for the rest of the paper. Macon et.al. was the first to use this function in a Vandermonde inversion solution in \cite{macon_1958}. It takes a Lagrange interpolation approach and uses the ESP to solve the inverse problem. Some inversion approaches such as LU decomposition do not encounter such functions. The only known exception is in \cite{Yang_2009} where Yang et.al. uses it as a tertiary variable in a LU decomposition solution. Solving these polynomials directly is inefficient. To reduce the time, there has been some research in finding algorithms to solve them \cite{Traub_1966,Yang_2009,Mikkawy_2003}. The majority of Vandermonde inversion techniques either use a recursive solution introduced in \cite{Traub_1966}, or do not explicitly state a solution. For example, there are a handful of algorithms that use the ESP to form an explicit algorithmic solution but are presented in a theoretical viewpoint \cite{macon_1958,klinger_1967,neagoe_1996,taher_2016}. Therefore, the solutions and performance analysis is left to the reader. In contrast, Eisinberg et. al. \cite{eisin_1981} uses the ESP as a set of variables to define an explicit formulation for the Vandermonde inverse and later generalized in \cite{EisinbergFedele2006}. Both these methods uses the solution first introduced in \cite{Traub_1966}.

\subsection{Related Works}
In this section, we briefly discuss existing solutions for both the ESP and the Vandermonde inverse. 
\subsubsection{Elementary Symmetric Polynomials}\label{sec_ESP_introduction}
To the best of our knowledge, there are three closed form expressions to the ESP \cite{Traub_1966,Yang_2009,Mikkawy_2003}.

The first expression was introduced by Traub in \cite{Traub_1966} and is the most commonly used in the literature. The formulation of this algorithm is defined by
\begin{equation}
 \sigma_{n,j} =  \sigma_{n-1,j} +  v_n\sigma_{n-1,j-1}\label{eq:traub}
 \end{equation}
 where
 \begin{align*}
n=2,3,\ldots,N; \qquad j = 1,2,\ldots,n \\
 \sigma_{n,0} = 1,\enskip \forall n;\quad  \sigma_{1,1} =v_1; \quad  \sigma_{n,j} = 0,\enskip j>n.
 \end{align*}
It is a simple fully recursive algorithm that creates a $N\times N$ matrix where the $(i,j)$'th entry is the $j$'th ESP over the first $i$ nodes from the set. Since $\sigma_{i,j} = 0,\enskip j>i$, the output matrix is lower triangular.

The second algorithmic expression was proposed by Mikkawy in \cite{Mikkawy_2003}. The algorithm is designed for a Vandermonde inverse solution where the one of the elements is removed in the sample set. Building on the notation of \cref{eq:elem_func}, the removal of an element in the sample set for the ESP will be defined by 
 \begin{equation}\label{eq:removed}
 \sigma_{N,j}^{(1)} = \sum\limits_{r_1\neq r_2\neq\cdots\neq r_j\neq1}{v_{r_1}v_{r_2}\cdots v_{r_j}}.
 \end{equation}
The removal of an element in the sample set is used in Vandermonde inverse solutions such as \cite{Traub_1966,Mikkawy_2003} and the proposed inverse that will be introduced in \cref{sec:main}. The ESP solution in \cite{Mikkawy_2003} is defined by
 \begin{equation}
\sigma_{n,n}^{(1)} =  \sigma_{n-1,n-1}^{(1)}v_n;\quad
\sigma_{n,j}^{(1)} =  \sigma_{n-1,j-1}^{(1)}v_n + \sigma_{n-1,j}^{(1)}, \label{eq:mikkawy}
 \end{equation}
where
 \begin{align*}
 \sigma_{1,1}^{(1)} =  1;  \quad n = 2,3,\ldots,N; \quad j = 2,3,\ldots,n-1.
 \end{align*}
The expression when the $i$'th element of the sample set is removed is
 \begin{align*}
 \sigma_{n,j}^{(i)} = \sigma_{n,j}^{(1)} \Big\vert_{v_i \rightarrow v_1} \text{for } \quad n = 2,3,\ldots,N; \quad j = 2,3,\ldots,n-1.
 \end{align*}
It implies that for a specific $i$ and $j$, $\sigma_{n,j}^{(i)}$ is obtained by replacing every $v_k$ with $v_1$ in $\sigma_{n,j}^{(1)}$. Since algorithmic expression removes one of the elements when calculating, it will create a $(N-1)\times (N-1)$ lower triangular matrix.

Finally, the third solution to the ESP was introduced by Yang et.al. in \cite{Yang_2009}. It is defined by
\begin{equation}
 \sigma_{n,j} =  \sum\limits_{k=0}^{j}({\prod_{i=1}^{k}}{v_{n-i+1}}) \sigma_{n-k,j-k}\label{eq:yang}
 \end{equation}
 with
 \begin{align*}
 \sigma_{n,0} =  1; \quad \prod_{i=1}^{0}{v_{n-i+1}} = 1.
 \end{align*}
This expression takes a more direct approach to calculate ESP. The partially recursive solution is a re-written form of the original definition in \cref{eq:elem_func}. The algorithm \cref{eq:yang} produces a $N\times N$ lower triangular matrix with same entries as expressed in \cref{eq:traub}. 

The three solutions to the ESP provides a matrix containing the combinations of $n,j$ in $\sigma_{n,j}$ where the rows relate to $n$ (number of samples) and the columns relate to $j$ ($j$'th ESP). Although \cite{Traub_1966,Yang_2009,Mikkawy_2003} does not have analysis on the performance on their respective solutions, it is easy to see why \cref{eq:traub} is the most used by inspection. The simple implementation is favorable while the fully recursive structure with basic computations (addition and multiplication) should provide faster computation speeds. 

\subsubsection{Vandermonde Inverse}
Vandermonde inverse solutions can be further divided into two categories. Focused on the approaches taken, the solutions are either based on polynomials or matrix relations.
 
\mypara{Polynomials} 
In the literature, there are a number of ways that solves the Vandermonde inverse using the polynomial approach. Lagrange interpolation polynomials over the unique nodes of the Vandermonde matrix are used in a few approaches to develop a solution for the inverse of a Vandermonde \cite{macon_1958,werts_1965,Mikkawy_2003}. Although the initial approach in the solutions are the same, the methods they use to solve the inverse is unique. For example, Macon et.al. uses an explicit formulation for the derivatives of a $n+1$ ordered fitted polynomial while Mikkawy uses partial fractions to refactor the Lagrange basis polynomials and invert the Vandermonde matrix in \cite{macon_1958, Mikkawy_2003}, respectively. Traub uses the orthonormality relation between a monic-polynomial and its associated polynomials to derive a closed form inverse solution in \cite{Traub_1966}.  Taher et.al. uses the Binet formula for a weighted r-generalized Fibonacci sequence to solve the Vandermonde linear system in \cite{taher_2016}. The solutions to the interpolated polynomial coefficients in \cite{taher_2016} are then used to develop the entries to the Vandermonde inverse. Csaki in \cite{csaki_1975} was one of the first to determine the elements of the inverse through the Hermite-Kronecker polynomials. Later, Eisinberg et.al. in \cite{eisin_1981} provides an equivalent solution to \cite{csaki_1975} with a highly recursive structure that improves on computation. This algorithm was further generalized in \cite{EisinbergFedele2006} by providing a fully recursive algorithm to the solution. This solution is more flexible and it allows special algorithms to be obtained for specific nodes such as equidistant and Chebyshev nodes.

\mypara{Matrix Relations} LU decomposition is one of the earliest matrix approaches used for Vandermonde inversion. The inverse is a product of two matrices ($U^{-1}L^{-1}$) where the elements of matrices $U^{-1}$ and $L^{-1}$ are derived in various ways. Readers are referred into \cite{nasa_60,bjorck_1970,eisinberg_1998} for some approaches in solving the factorized matrices. There are other matrix approaches to find the inverse. Similar to \cite{eisinberg_1998}, Bender et.al. develops a recursive solution for Vandermonde matrices with equidistant integer nodes  \cite{stirlingInv}. It is a linear recursion relation in the form of a 2D Pascal pyramid by inspecting the differences between consecutive matrix orders. Discussed in detail in \cref{sec:main}, Man uses the cover-up technique in partial fraction decomposition to formulate the inverse as the product of two matrices \cite{man2014inversion}. Pantelous et.al. factorizes and calculates the inverse by a set of left-multiplied and a set of right-multiplied matrices \cite{pantelous_2013}. Kaufman in \cite{kaufman_1969} formulates a recursive formula based on Hermite interpolating polynomials in order to determine the rows of the inverse matrix. \cite{klinger_1967} and \cite{neagoe_1996} both develops a relationship between consecutive matrix orders using the determinants of the Vandermonde. Klinger of \cite{klinger_1967} forms a relation between the determinants of a Vandermonde and a $n+1$ column (powers) adjoined, j'th column removed Vandermonde matrix. Neagoe, on the other hand, forms a relation between the determinants of a Vandermonde and a Vandermonde with the j'th column removed \cite{neagoe_1996}. Both uses ESP in their formulation. In \cite{youness_2016}, Ghassabeh factorizes the inverse into three matrices and uses them in a recursive solution. The algorithm iteratively calculates the inverse of the Vandermonde. The order of the matrix increases by one for every iteration and stops when the desired order is reached \cite{youness_2016}. This design allows the formulation of the inverse when the nodes are observed sequentially.

\subsection{Shortcomings and Contributions} 
%The shortcomings of the state-of-the-art solutions for both the ESP and the Vandermonde inverse are described as follows:
Although the ESP solutions in \cite{Traub_1966,Mikkawy_2003,Yang_2009} are tailored differently, a common disadvantage is a specific recursive recalling behaviour, which produces inaccuracies when using the roots of unity sample set. This is of paramount importance in applications such as in super-resolution \cite{superResolution, li2017stable, kunis2018condition, batenkov2018conditioning, batenkov2019super, batenkov2019rethinking} or special matrix form construction such as in \cite{sajadieh2012construction, lacan2004systematic, li2018direct, yaici2019particular}. Moreover, despite vigorous research efforts in finding Vandermonde inverse solutions, there has yet to be a simple, general, fast and accurate inverse solution. Under certain applications, some solutions are designed to perform accurately under only integer nodes such as in \cite{eisinberg_1998,stirlingInv}. To the best of our knowledge, the solution by Eisinberg et.al. in \cite{EisinbergFedele2006} is the current state-of-the-art inverse solution for general square Vandermonde matrices. It uses recursive formulas from a set of defined functions. Depending on the type of nodes, these recursive functions may be reformulated into closed expressions.

This paper encapsulates solutions for the ESP and inverse Vandermonde that will address some of the shortcomings above. The main contributions are as follows:
\begin{itemize}
\item We proposed a new recursive solution to the ESP that solves the issue of imbalance summation and significantly outperforms other ESPs on certain nodes such as the $N$th roots of unity defined on the complex plane.
\item Utilized by our new ESP method, we develop a novel and compact approach to calculate the inverse of a general Vandermonde matrix that can be defined on any arbitrary pairwise (non-repetitive) nodes.
\item We employ the spectral decomposition of Frobenius companion matrix for indirect evaluation of the inverse Vanderomonde and provide a numerical approach for stability and accuracy analysis.
\item Thorough analysis on numerical experimentation are provided to analyze the utility of our proposed solution on different nodes for 1D interpolation problem and compared to the sate-of-the-art methods. The results suggest that the proposed method can achieve great performances on certain nodes such as $N$th roots of unity.
\end{itemize}

The early work in \cite{hosseini2017finite} proposed a closed-form solution to the finite difference methods, where the solution to the fullband finite difference calculations are expressed by the inverse Vandermonde matrix-vector calculation using equal distance nodes. We found a closed form expression to the ESP using this node design. In this paper, we generalize this design into any arbitrary pairwise distinct nodes $\{v_1,v_2,\cdots,v_N\}$ that can contain real or complex structure and obtain a closed form solution to both ESP and inverse Vandermonde matrix calculation. Furthermore, we emphasize its high stability over approximating polynomials sampled over the complex unit circle.

The remainder of this paper is organized as follows. The closed form expressions for the ESP and the Vandermonde inverse are proposed in \cref{sec:main}. The analysis and discussion on ESP over the unit circle is presented in \cref{sec:elementary}. And the experimental results and discussions are provided in \cref{sec:app}. We conclude the paper in \cref{sec:conclusions}.

\section{Main results}\label{sec:main}
Chen \emph{et al.} \cite{chen1981new} introduced a method to determine the coefficients of high ordered polynomial expansions and to incorporate the inverse of a Vandermonde matrix. Given the following rational function:
\begin{align}
f(x) 
&= \frac{b_1x^{N-1}+b_2x^{N-2}+\cdots+b_N}{x^N + a_1x^{N-1}+\cdots+a_N}\nonumber \\
&= \frac{b_1x^{N-1}+b_2x^{N-2}+\cdots+b_N}{(x-v_1)(x-v_2)\cdots(x-v_N)}\nonumber,
\end{align}
with partial decomposition
\begin{displaymath}
f(x) = \frac{k_1}{x-v_1} + \frac{k_2}{x-v_2} + \cdots + \frac{k_N}{x-v_N},
\end{displaymath}
their formulation for $k_i$ is as follows: 
\begin{displaymath}
\begin{bmatrix}
k_1\\k_2\\ \vdots\\k_N
\end{bmatrix}=
\begin{bmatrix}
1 & 1 & \cdots & 1\\
v_1 & v_2 & \cdots & v_N\\
\vdots & \vdots &\vdots & \vdots\\
v_1^{N-1} & v_2^{N-1} & \cdots & v_N^{N-1}
\end{bmatrix}^{-1}
\begin{bmatrix}
1&0&\cdots&0\\
a_1&1&\cdots&0\\
a_2&a_1&\cdots&0\\
\vdots &\vdots & \vdots &\vdots\\
a_{N-1} & a_{N-2}&\cdots&1
\end{bmatrix}^{-1}
\begin{bmatrix}
b_1\\b_2\\\vdots\\b_N
\end{bmatrix}
\end{displaymath}
and rewritten as
\begin{equation}
k= V^{-1}A^{-1}b\label{eq:1981}
\end{equation}
Where $V$ is a Vandermonde matrix and $A$ is a Stanley matrix. The form of matrix $A$ was first introduced by William D. Stanley in his time-to-frequency domain matrix formulation \cite{stanley1964time}. Y. K. Man \cite{man2014inversion} solves coefficient vector $k$ and uses the formulation of \cite{chen1981new} to create a new solution for the Vandermonde inverse.
\\
\\
In \cite{man2014inversion}, the coefficients $k_i$ can be solved using
\begin{displaymath}
f(x)(x-v_i)\vert_{x=v_i} = \frac{b_1v_i^{N-1}+b_2v_i^{N-2}+\cdots+b_N}{\lambda_i}
\end{displaymath}
where $\lambda_k=\prod\limits_{\substack{j=1\\ j\neq k}}^{N}(v_k-v_j)$. Rewritten in ma  trix form
\begin{displaymath}
k = \begin{bmatrix}
\frac{v_1^{N-1}}{\lambda_1}&
\frac{v_1^{N-2}}{\lambda_1}&
\cdots&
\frac{1}{\lambda_1}\\
\frac{v_2^{N-1}}{\lambda_2}&
\frac{v_2^{N-2}}{\lambda_2}&
\cdots&
\frac{1}{\lambda_2}\\
\vdots & \vdots & \vdots & \vdots \\
\frac{v_1^{N-1}}{\lambda_N}&
\frac{v_1^{N-2}}{\lambda_N}&
\cdots&
\frac{1}{\lambda_N}\\
\end{bmatrix}
\begin{bmatrix}
b_1\\b_2\\ \vdots \\b_N
\end{bmatrix}
\end{displaymath}
\begin{displaymath}
= W b \nonumber
\end{displaymath}
\\
\\Combined with Equation \cref{eq:1981}, \cite{man2014inversion} obtains:
 
\begin{equation}\label{eq:eq1}
V^{-1}=WA
\end{equation}
\\
In this work, $W$ is further refactored
\begin{displaymath}\label{eq:eq2}
W=\left[\begin{array}{c@{\hspace{.5em}}c@{\hspace{.5em}}c@{\hspace{.5em}}c@{\hspace{.5em}}}
\lambda_1 & 0 & \cdots & 0 \\
0 & \lambda_2 & \cdots & 0 \\
\vdots & \vdots & \ddots & \vdots \\
0 & 0 & \cdots & \lambda_N
\end{array}\right]^{-1}
\left[
\begin{array}{l@{\hspace{.5em}}l@{\hspace{.5em}}l@{\hspace{.5em}}l@{\hspace{.5em}}}
v_1^{N-1} & v_1^{N-2} & \cdots & 1 \\
v_2^{N-1} & v_2^{N-2} & \cdots & 1 \\
\vdots & \vdots & \ddots & \vdots \\
v_N^{N-1} & v_N^{N-2} & \cdots & 1
\end{array}
\right]
\end{displaymath}
The pivot elements of the diagonal matrix is computed as
\begin{equation}\label{eq:eq3}
\lambda_k=\prod\limits_{\substack{j=1\\ j\neq k}}^{N}(v_k-v_j)
\end{equation}
The Stanely matrix \cite{stanley1964time} $A$ is defined by 
\begin{equation}\label{eq:eq4}
A=\left[
\begin{array}{l@{\hspace{.45em}}l@{\hspace{.45em}}l@{\hspace{.45em}}l@{\hspace{.45em}}}
1 & 0  & \cdots & 0 \\
a_1 & 1 & \cdots & 0 \\
a_2 & a_1 & \cdots & 0 \\
\vdots & \vdots & \ddots & \vdots \\
a_{N-1} & a_{N-2} & \cdots & 1
\end{array}
\right],~\text{where}~
a_j=(-1)^j\sum\limits_{r_1\neq r_2\neq\cdots\neq r_j}{v_{r_1}v_{r_2}\cdots v_{r_j}} = (-1)^j\sigma_{N,j}
\end{equation}
%The coefficients $v_j$ in $F$ are the roots of a rational function. 
%The sum is non repetitive with $r_i \in \{1,2,\cdots,N\}$.
\\
\\
We define the closed form expression for the ESP below.
\begin{theorem}
\label{thm:hypercube_summation}
Consider the elementary symmetric polynomial $\sigma_{N,n}$
\begin{equation}
 \sigma_{N,n} = \sum\limits_{\chi}{v_{r_1}v_{r_2}\cdots v_{r_n}}
%\label{eq:hypercube_1}
\end{equation}
where, $r_j\in \{1,2,\cdots,N\}$ and $\chi=\{r_1\neq r_2\neq\cdots\neq r_n\}$ is a topological space containing a set of elements with no repetition. The $n$ recursive formulation with initialized parameters $i=0$ and $f_{0}(v) = v$.
\begin{equation}
\left\{
\begin{array}{l}
f_{i}(v_d) = v_d\left[C_{i-1} - (n-i)f_{i-1}(v_d)\right] \\
C_{i} = \sum\limits_{d}f_{i}(v_d)\\
i\leftarrow i+1
\end{array}
\right.
%\label{eq_hypercube_2}
\end{equation}
with $d = \{1,2,\cdots,N\}$ leads to the closed form expression with $\sigma_{N,n} = \frac{1}{n!}C_{n-1}$ %\label{eq_hypercube_3} 
\end{theorem}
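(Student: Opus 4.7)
The plan is to unwind the recursion into an explicit polynomial expression for $C_{n-1}$ in the power sums $p_k:=\sum_d v_d^k$, and then match this expression against the classical Newton--Girard expansion of $n!\,\sigma_{N,n}$.

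First, I would prove by induction on $i$ the closed form
\[
f_i(v) = \sum_{m=0}^{i} (-1)^m \left[\prod_{j=0}^{m-1}(n-i+j)\right] v^{m+1}\, C_{i-m-1}, \qquad C_{-1}:=1.
\]
The base case $i=0$ is $f_0(v)=v$ (only the $m=0$ term survives, with empty product equal to $1$). For the inductive step, $f_i(v)=vC_{i-1}-(n-i)v f_{i-1}(v)$ supplies the $m=0$ term directly, while applying $-(n-i)v$ to the $m'$-th term of $f_{i-1}$ produces the $(m'+1)$-th term of $f_i$, picking up exactly one extra factor $(n-i)$ in the rising factorial. Summing at $v=v_d$ over $d$ converts each $v_d^{m+1}$ into a power sum $p_{m+1}$, yielding
\[
C_i = \sum_{m=0}^{i} (-1)^m \left[\prod_{j=0}^{m-1}(n-i+j)\right] p_{m+1}\, C_{i-m-1}.
\]

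Next, I would specialize to $i=n-1$ and iteratively unroll. Tracking partial sums $K_t:=k_1+\cdots+k_t$ along a chain of substitutions parametrized by positive integers $k_1,\ldots,k_s$ with $\sum_t k_t=n$, the rising factorial at the $t$-th substitution evaluates to $(K_t-1)!/K_{t-1}!$. The product across the chain telescopes to $(n-1)!/(K_1 K_2\cdots K_{s-1})$. Collecting signs $(-1)^{n-s}$, this delivers
\[
C_{n-1} = \sum_{s\ge 1}\ \sum_{(k_1,\ldots,k_s)} (-1)^{n-s}\frac{(n-1)!}{K_1 K_2 \cdots K_{s-1}}\prod_{r=1}^{s} p_{k_r},
\]
the inner sum ranging over ordered compositions of $n$ into $s$ positive parts.

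Finally, I would compare against the Newton--Girard expansion obtained as $n!$ times the coefficient of $t^n$ in $\exp\!\bigl(\sum_{k\ge 1}(-1)^{k-1}\tfrac{p_k t^k}{k}\bigr)$:
\[
n!\,\sigma_{N,n} = \sum_{s\ge 1}\ \sum_{(k_1,\ldots,k_s)}\frac{(-1)^{n-s}\, n!}{s!\,k_1 k_2\cdots k_s}\prod_{r=1}^{s} p_{k_r}.
\]
Grouping compositions by their underlying partition of $n$, term-by-term equality with the previous display reduces to the identity
\[
\sum_{\sigma\in S_s}\prod_{r=1}^{s-1}\frac{1}{K_{r,\sigma}} = \frac{n}{k_1 k_2\cdots k_s}, \qquad K_{r,\sigma}:=k_{\sigma(1)}+\cdots+k_{\sigma(r)},
\]
which I would prove by induction on $s$: condition on $\sigma(s)$, use that $K_{s,\sigma}=n$ is $\sigma$-independent, and apply the inductive hypothesis on the remaining sum over $S_{s-1}$. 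The main obstacle is this combinatorial bookkeeping---the telescoping of rising factorials and the symmetric-group identity are the two technical crux points, after which the theorem follows immediately.
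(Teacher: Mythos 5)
Your proof is correct, but it takes a genuinely different route from the paper's. The paper argues directly on the nested constrained sums defining the (ordered) ESP: it relaxes the innermost constraint $r_n\neq r_1,\ldots,r_{n-1}$, subtracts the $n-1$ overcounted diagonal terms, uses the permutation symmetry of the outer summation domain to collapse them into a single correction with coefficient $(n-1)$, and factors the result to produce exactly the recursion $f_{i}(v_d)=v_d\left[C_{i-1}-(n-i)f_{i-1}(v_d)\right]$; iterating the same relaxation $n-1$ more times identifies $C_{n-1}$ with the ordered sum, which is $n!$ times the elementary symmetric polynomial. You instead take the recursion as given, unroll it into an explicit expansion of $C_{n-1}$ over ordered compositions of $n$ in the power sums $p_k$ (your closed form for $f_i$, the telescoping $(K_t-1)!/K_{t-1}!$, and the resulting coefficient $(n-1)!/(K_1\cdots K_{s-1})$ all check out), and match it against the Newton--Girard / exponential-generating-function expansion of $n!\,\sigma_{N,n}$ via the identity $\sum_{\sigma\in S_s}\prod_{r=1}^{s-1}K_{r,\sigma}^{-1}=n/(k_1\cdots k_s)$, which your conditioning-on-$\sigma(s)$ induction does establish (and which handles repeated parts correctly, since the multiplicity factors cancel on both sides). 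The paper's argument is more self-contained and explains where the recursion comes from, namely an inclusion--exclusion on the constraint relaxation; yours requires more combinatorial bookkeeping but ties the algorithm to the classical power-sum machinery and, as a bonus, makes the $1/n!$ normalization explicit --- a factor the paper's own closing display quietly elides, since the quantity computed in its proof is the ordered sum, $n!$ times the $\sigma_{N,n}$ appearing in the theorem's conclusion.
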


\begin{proof}

The conditions of the summation set can be rewritten as
\begin{equation}
\sigma_{N,n} =
{\sum\limits_{\substack{r_1}}}
{\sum\limits_{\substack{r_2\\ r_2\neq r_1}}}
\ldots
{\sum\limits_{\substack{r_n\\ r_n\neq r_{n-1}\\ \vdots \\ r_n\neq r_1}}}
{v_{r_1}v_{r_2}\cdots v_{r_n}}
\label{eq:hyper_cube_proof_1}
\end{equation}
An equivalent interpretation of \cref{eq:hyper_cube_proof_1} is to relax the restrictions of the last summation and subtract the new terms from the relaxation. Therefore \cref{eq:hyper_cube_proof_1} is rewritten as
%accumulate the multiplying functions of discrete coordinates $v_i$ of an $n$ dimensional hypercube except on its diagonals. Since the repetitions are not allowed, the summation is only valid on one corner of the hypercube out of $n!$ possible corners. Therefore, \cref{eq:hyper_cube_proof_1} can be decomposed by relaxing the conditions of the last sum and written as

\begin{multline}
\sigma_{N,n}  =
{\sum\limits_{\substack{r_1}}}
{\sum\limits_{\substack{r_2\\ r_2\neq r_1}}}
\ldots
{\sum\limits_{\substack{r_{n-1}\\ r_{n-1}\neq r_{n-2}\\ \vdots \\ r_{n-1}\neq r_{1}}}}
{\sum\limits_{r_n}}
{v_{r_1}v_{r_2}\cdots v_{r_n}} \\
-
{\sum\limits_{\substack{r_1}}}
{\sum\limits_{\substack{r_2\\ r_2\neq r_1}}}
\ldots
{\sum\limits_{\substack{r_{n-1}\\ r_{n-1}\neq r_{n-2}\\ \vdots \\ r_{n-1}\neq r_{1}}}}
{v_{r_1}^2v_{r_2}\cdots {v_{r_{n-1}}}} \\ 
-
{\sum\limits_{\substack{r_1}}}
{\sum\limits_{\substack{r_2\\ r_2\neq r_1}}}
\ldots
{\sum\limits_{\substack{r_{n-1}\\ r_{n-1}\neq r_{n-2}\\ \vdots \\ r_{n-1}\neq r_{1}}}}
{v_{r_1}v_{r_2}^2\cdots {v_{r_{n-1}}}} \\ 
\cdots -
{\sum\limits_{\substack{r_1}}}
{\sum\limits_{\substack{r_2\\ r_2\neq r_1}}}
\ldots
{\sum\limits_{\substack{r_{n-1}\\ r_{n-1}\neq r_{n-2}\\ \vdots \\ r_{n-1}\neq r_{1}}}}
{v_{r_1}v_{r_2}\cdots {v_{r_{n-1}}}^2}~~~~~~~~~~~~~~~~
\end{multline}
Collecting and re-arranging terms:
\begin{multline}
\sigma_{N,n} =
{\sum\limits_{\substack{r_1}}}
{\sum\limits_{\substack{r_2\\ r_2\neq r_1}}}
\ldots
{\sum\limits_{\substack{r_{n-1}\\ r_{n-1}\neq r_{n-2}\\ \vdots \\ r_{n-1}\neq r_{1}}}}
{\sum\limits_{r_n}}
{v_{r_1}v_{r_2}\cdots v_{r_n}} \\
-
{\sum\limits_{\substack{r_1}}}
{\sum\limits_{\substack{r_2\\ r_2\neq r_1}}}
\ldots
{\sum\limits_{\substack{r_{n-1}\\ r_{n-1}\neq r_{n-2}\\ \vdots \\ r_{n-1}\neq r_{1}}}}\Bigg(
{v_{r_1}^2v_{r_2}\cdots {v_{r_{n-1}}}} \\ 
\hspace*{4.6cm}+{v_{r_1}v_{r_2}^2\cdots {v_{r_{n-1}}}} \\ 
\hspace*{6.6cm}\\
\hspace*{5cm}\ldots+{v_{r_1}v_{r_2}\cdots {v_{r_{n-1}}}^2}\Bigg)\\
\end{multline}

\begin{flushleft}
Due to symmetry, the collected terms can simplify by allowing the the last term of each summand $v_{r_{n-1}}$ to be squared:
\end{flushleft} 
\begin{multline} \label{eq_hypercube_proof_4}
\sigma_{N,n} =
{\sum\limits_{\substack{r_1}}}
{\sum\limits_{\substack{r_2\\ r_2\neq r_1}}}
\ldots
{\sum\limits_{\substack{r_{n-1}\\ r_{n-1}\neq r_{n-2}\\ \vdots \\ r_{n-1}\neq r_{1}}}}
{\sum\limits_{r_n}}
{v_{r_1}v_{r_2}\cdots v_{r_n}} \\
-(n-1)
{\sum\limits_{\substack{r_1}}}
{\sum\limits_{\substack{r_2\\ r_2\neq r_1}}}
\ldots
{\sum\limits_{\substack{r_{n-1}\\ r_{n-1}\neq r_{n-2}\\ \vdots \\ r_{n-1}\neq r_{1}}}}
{v_{r_1}v_{r_2}\cdots {v_{r_{n-1}}}^2}
\end{multline}

\begin{flushleft}
The common terms in \cref{eq_hypercube_proof_4} can be factorized and the expression rewritten as
\end{flushleft}
\begin{gather}
\sigma_{N,n} =
{\sum\limits_{\substack{r_1}}}
{\sum\limits_{\substack{r_2\\ r_2\neq r_1}}}
\ldots
{\sum\limits_{\substack{r_{n-1}\\ r_{n-1}\neq r_{n-2}\\ \vdots \\ r_{n-1}\neq r_{1}}}}\left[
{v_{r_1}v_{r_2}\cdots v_{r_{n-1}}}
\left(
{\sum\limits_{r_n}v_{r_n}-(n-1)v_{r_{n-1}}}
\right)\right]\label{eq_hypercube_proof_5}
\end{gather}
\\
Define $f_{1}(v_{r_{n-1}})=v_{r_{n-1}}(\sum\limits_{r_n}(f_{0}(v_{r_n}))-(n-1)f_{0}(v_{r_{n-1}}))$ with terminal function $f_{0}(v)=v$. The combinatorial sum \cref{eq_hypercube_proof_5} is reduced to $n-1$ summations. Repeat this reduction recursively  $n$ times to obtain:

\begin{gather*}
\sigma_{N,n} =
{\sum\limits_{\substack{r_1}}}\left[
{v_{r_1}}
\left({\sum\limits_{{r_2}}f_{n-2}(v_{r_2})-(1)f_{n-2}(v_{r_1})}\right)\right]
\end{gather*}

\begin{gather*}
\sigma_{N,n} =
{\sum\limits_{\substack{r_1}}}
f_{n-1}(v_{r_1})
\end{gather*}
Since  $ \forall i,  r_i\in \{1,2,\cdots,N\}$, we define a general variable $d$
\begin{gather*}
\sigma_{N,n} =
{\sum\limits_{\substack{r}}}
f_{n-1}(v_r)
\end{gather*}
\end{proof}

Now, using \cref{thm:hypercube_summation}, we define a closed form solutions to the matrix equations defined in \cref{eq:eq1}:

\begin{theorem}\label{thm:main_vander}
Let row index $1\leq i \leq N$, column index $1\leq j \leq N$, and $v_j \in C$ be given. The closed form solution to the inverse Vandermonde matrix's (i'th row,j'th column) component is given by
\begin{equation}
\left(V^{-1}\right)_{i,j} =\begin{cases}
    \frac{(-1)^{N-j}}{\lambda_i}
\sigma^{(i)}_{N-1,N-j}\\%\sum\limits_{\chi}{v_{r_1} v_{r_2}\cdots v_{r_{N-j}}}, & \text{$j<N$}.\\
     \frac{1}{\lambda_i}, & \text{j=N}.
  \end{cases}\label{eq:eq5}
\end{equation}
where, the set of values is $\{r_1\neq r_2\neq\cdots\neq r_{N-j}\neq i\}$. The ESP with the $i$'th element removed, $\sigma_{N-1,N-j}^{(i)}$, and the pivotal elements, $\lambda(i)$, are obtained from \cref{thm:hypercube_summation} and \cref{eq:eq3}, respectively.
\end{theorem}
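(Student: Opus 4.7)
The plan is to expand the factorization $V^{-1} = WA$ entrywise and recognize the resulting sum by a polynomial-division argument. Substituting $W_{i,k} = v_i^{N-k}/\lambda_i$ and $A_{k,j} = (-1)^{k-j}\sigma_{N,k-j}$ (which vanishes for $k<j$, with $\sigma_{N,0}=1$) and setting $m = k - j$, the $(i,j)$ entry of $V^{-1}$ collapses to
$$(V^{-1})_{i,j} \;=\; \frac{1}{\lambda_i}\sum_{m=0}^{N-j}(-1)^m\sigma_{N,m}\,v_i^{\,N-j-m}.$$
The entire theorem thus reduces to the scalar identity
$$\sum_{m=0}^{n}(-1)^m\sigma_{N,m}\,v_i^{\,n-m} \;=\; (-1)^n\sigma_{N-1,n}^{(i)}, \qquad n := N-j,$$
after which the boundary case $j=N$ (i.e.\ $n=0$) is automatic from the conventions $\sigma_{N,0} = \sigma_{N-1,0}^{(i)} = 1$, so the two branches of the piecewise formula are consistent and I would simply prove the first branch.

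The central step is a synthetic-division argument on the monic polynomial whose roots are the Vandermonde nodes. By direct expansion,
$$P(x) \;=\; \prod_{k=1}^{N}(x-v_k) \;=\; \sum_{m=0}^{N}(-1)^m\sigma_{N,m}\,x^{N-m},$$
and analogously $Q(x) = \prod_{k\neq i}(x-v_k) = \sum_{n=0}^{N-1}(-1)^n\sigma_{N-1,n}^{(i)}\,x^{N-1-n}$. Because $v_i$ is a root of $P$, the factorization $P(x) = (x-v_i)Q(x)$ is exact, and matching coefficients of $x^{N-n}$ on both sides yields the Horner-type recursion $q_0 = p_0 = 1$ and $q_n = v_i\,q_{n-1} + p_n$, where $p_m = (-1)^m\sigma_{N,m}$ and $q_n = (-1)^n\sigma_{N-1,n}^{(i)}$. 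Unrolling this recursion gives the closed form $q_n = \sum_{m=0}^{n} v_i^{\,n-m}\,p_m$, which is precisely the left-hand side of the target identity. Equating the two expressions for $q_n$ closes the argument.

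The main obstacle is careful bookkeeping rather than any substantive analytic difficulty: two interleaved sign conventions (one from the alternating coefficients of $P(x)$, one from the subdiagonal entries of the Stanley matrix $A$ in \cref{eq:eq4}) and the three index shifts linking $(i,j)$, the dummy indices $k$ and $m$ in the matrix product, and the polynomial-division index $n = N-j$ must all be aligned consistently. It is worth noting that \cref{thm:hypercube_summation} is \emph{not} invoked in establishing correctness of the formula; it enters only afterwards, as the computational engine that evaluates the reduced ESPs $\sigma_{N-1,n}^{(i)}$ appearing in the final expression.
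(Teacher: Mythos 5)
Your proof is correct, and its skeleton coincides with the paper's: both start from the factorization $V^{-1}=WA$, expand the $(i,j)$ entry into $\frac{1}{\lambda_i}\sum_{m=0}^{N-j}(-1)^m\sigma_{N,m}\,v_i^{N-j-m}$, and then show this collapses to $\frac{(-1)^{N-j}}{\lambda_i}\sigma^{(i)}_{N-1,N-j}$. Where you diverge is in how the collapse is justified. The paper argues directly on the ESPs: it splits each $\sigma_{N,m}$ into the terms containing $v_i$ and those not, i.e.\ $\sigma_{N,m}=v_i\,\sigma^{(i)}_{N-1,m-1}+\sigma^{(i)}_{N-1,m}$, substitutes this into every summand, and observes a telescoping cancellation that leaves only the last term. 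You instead isolate the scalar identity $\sum_{m=0}^{n}(-1)^m\sigma_{N,m}v_i^{\,n-m}=(-1)^n\sigma^{(i)}_{N-1,n}$ and prove it by writing $P(x)=\prod_k(x-v_k)=(x-v_i)Q(x)$ with $Q(x)=\prod_{k\neq i}(x-v_k)$, matching coefficients to get the Horner recursion $q_n=v_iq_{n-1}+p_n$, and unrolling it. These are two presentations of the same algebraic fact---the coefficient-matching relation \emph{is} the paper's splitting identity, and unrolling the recursion \emph{is} the telescoping---but your packaging is cleaner: it explains \emph{why} the cancellation happens (synthetic division of the node polynomial by $x-v_i$) rather than exhibiting it term by term, and it cleanly disposes of the $j=N$ branch via $\sigma^{(i)}_{N-1,0}=1$. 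Your closing remark is also accurate and worth keeping: \cref{thm:hypercube_summation} plays no role in the correctness of \cref{eq:eq5}; it only supplies a way to evaluate $\sigma^{(i)}_{N-1,N-j}$ numerically, exactly as in the paper's own proof, which never invokes it either.
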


\begin{proof}
Use the decomposing matrices $W$ and $A$ defined in \cref{eq:eq2} and \cref{eq:eq4} for the Vandermonde inverse in \cref{eq:eq1}. The solution to the $i,j$th coefficient will be obtained by
\begin{align}
\left(V^{-1}\right)_{i,j} &=W[i\text{th row}] \times A[j\text{th column}]\nonumber\\
 &=\frac{1}{\lambda(i)}\left[v_i^{N-j}+v_i^{N-(j+1)}a_1+v_i^{N-(j+2)}a_2+\cdots+a_{N-j}\right] \label{eq:eq6}
\end{align}
Using the definition for $a_i$ in \cref{eq:eq4}, \cref{eq:eq6} yields
\begin{align}
\left(V^{-1}\right)_{i,j} &=\frac{1}{\lambda_i}\times\left[v_i^{N-j}+(-1)v_i^{N-(j+1)}\sum\limits_{r_1}v_{r_1} + (-1)^2v_i^{N-(j+2)}\sum\limits_{r_1\neq r_2}v_{r_1} v_{r_2} +\cdots \right. \nonumber\\
& \left. +(-1)^{N-j}\sum\limits_{r_1\neq r_2\neq\cdots\neq r_{N-j}}{v_{r_1} v_{r_2}\cdots v_{r_{N-j}}}\right] \label{eq:eq7}
\end{align}
Rewriting each summation into two terms, \cref{eq:eq7} can be rewritten as
\begin{align}
\left(V^{-1}\right)_{i,j} &=\frac{1}{\lambda(i)} \times\Bigg[v_i^{N-j}-v_i^{N-(j+1)}\left(v_i+\sum\limits_{r_1\neq i}v_{r_1}\right)+\nonumber \\
& (-1)^2 v_i^{N-(j+2)}\left(v_i\sum\limits_{r_1\neq i} v_{r_1} + \sum\limits_{r_1\neq r_2\neq i}v_{r_1} v_{r_2}\right)+ \nonumber\\
&\cdots + (-1)^{N-j}\Bigg(v_i\sum\limits_{r_1\neq r_2\neq\cdots\neq r_{N-(j+1)}\neq i}{v_{r_1} v_{r_2}\cdots v_{r_{N-(j+1)}}} \nonumber\\
&+\sum\limits_{r_1\neq r_2\neq\cdots\neq r_{N-j}\neq i}{v_{r_1} v_{r_2}\cdots v_{r_{N-j}}}\Bigg)\Bigg] 
\end{align}
Expanding and collecting possible terms, 
\begin{align}
        V^{-1}(i,j) &=\frac{1}{\lambda(i)} 
        \times
	\Bigg[
            \left(
                (v_i)^{N-j}-(v_i)^{N-j}
            \right) + \nonumber\\
            &\left(
                -(v_i)^{N-(j+1)}\sum\limits_{r_1\neq i}(v_{r_1})+(v_i)^{N-(j+1)}\sum\limits_{r_1\neq i}(v_{r_1})
            \right) + \ldots \nonumber\\ 
%            &\cdots + (-1)^{N-j}(v-i)^{1}\sum\limits_{r_1\neq r_2\neq\cdots\neq r_{N-(j+1)}\neq i}{(v_{r_1})(v_{r_2})\cdots(v_{r_{N-(j+1)}})}\Bigg)+
%            \nonumber\\
            &\ldots+(-1)^{N-j} \sum\limits_{r_1\neq r_2\neq\cdots\neq r_{N-j}\neq i}{(v_{r_1})(v_{r_2})\cdots(v_{r_{N-j}})}
            \Bigg]
\end{align}
The remaining non-cancelled terms are written as
\begin{equation}
V^{-1}(i,j) =\frac{(-1)^{N-j}}{\lambda(i)}\sum\limits_{r_1\neq r_2\neq\cdots\neq r_{N-j}\neq i}{(v_{r_1})(v_{r_2})\cdots(v_{r_{N-j}})}
\end{equation}
\end{proof}

%%%%%%%%%%%%%%%%%%%%%%%%%%%%%%%%%%%%%%%%%%%%%%
\section{Complex Nodes on the Unit Circle--A Case Study}\label{sec:elementary}
In this section, we consider a particular case of polynomials where the sample nodes are obtained from the $N$th roots of the unit circle in the complex domain. These specific nodes can play a crucial role in sampling problems such as super-resolution where the idea is to recover super-resolved signals obtained from the discrete Fourier transform (DFT) domain. Super-resolution is an inverse problem which becomes highly ill-conditioned in the presence of noise. Readers are referred to the related topics in \cite{candes2014towards, superResolution, batenkov2019super, batenkov2019rethinking} for more information. Motivated by such application in this type of node sampling, our aim in this section is to explore the utility of both theorems, introduced in \ref{thm:hypercube_summation} and \ref{thm:main_vander}, as a numerical framework for the inverse Vandermonde calculation. In particular, we conduct an ablation study on the well-posedness of both ESP and inverse Vandermonde in terms of numerical accuracy and stability.
\begin{figure}[ht]
\centering
\begin{tabular}{p{2.1cm}p{2.1cm}p{2.1cm}p{2.1cm}p{2.1cm}}
\subfloat[$N=4$]{\label{fig:simple_case}\includegraphics[width=.8\linewidth]{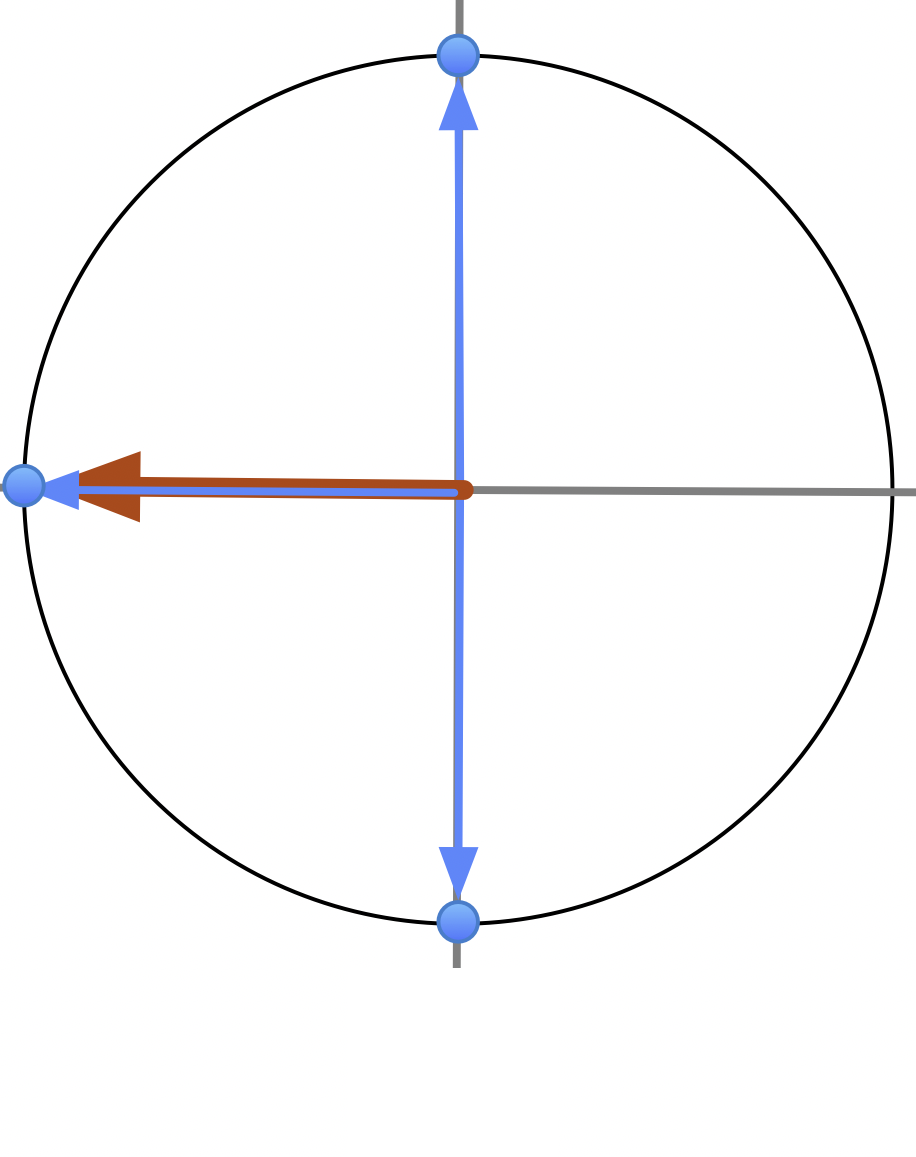}} &
\subfloat[$N=50$]{\label{fig:N_50}\includegraphics[width=1\linewidth]{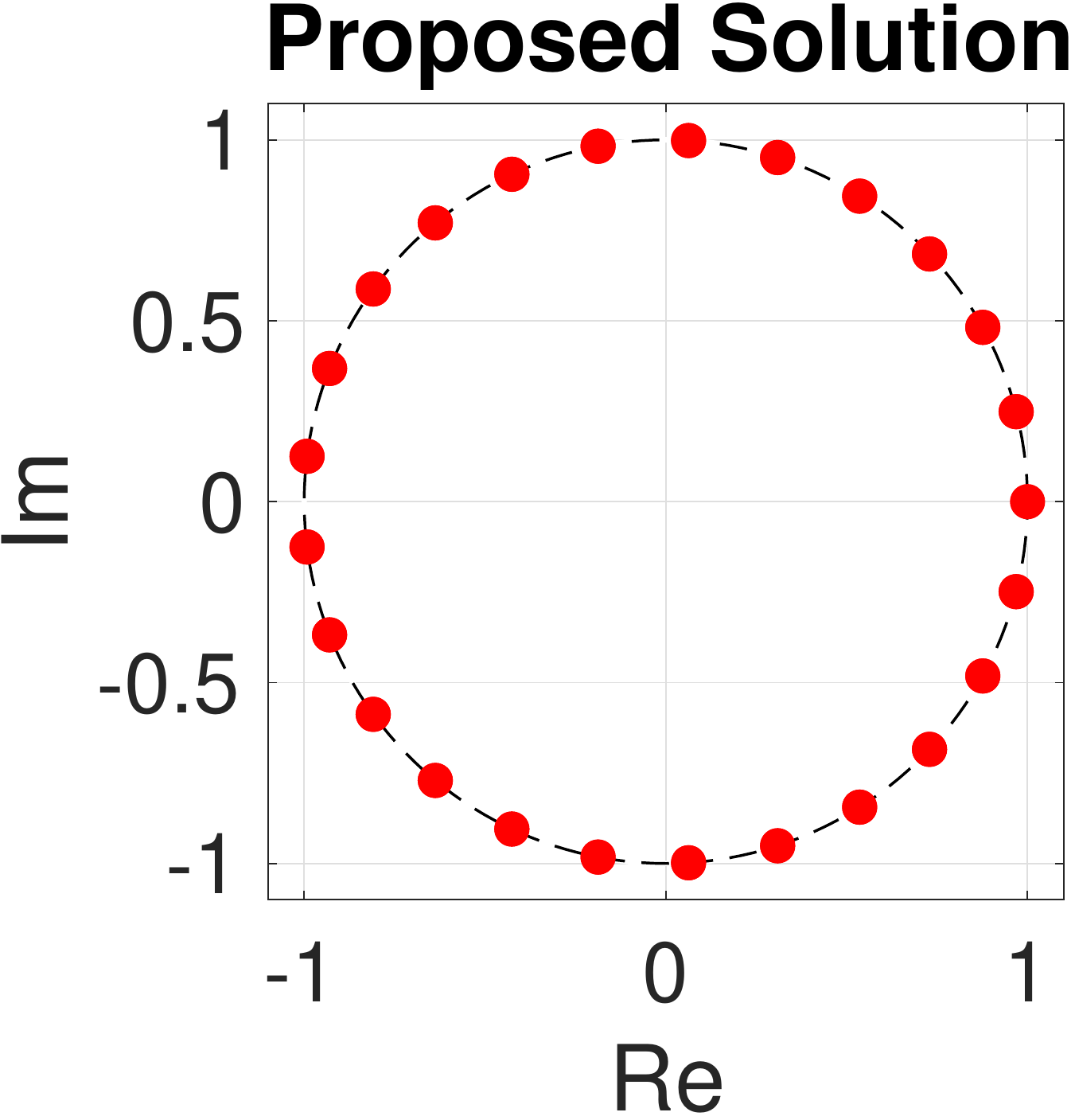}}\enskip &
\subfloat[$N=50$]{\label{fig:NT_50}\includegraphics[width=1\linewidth]{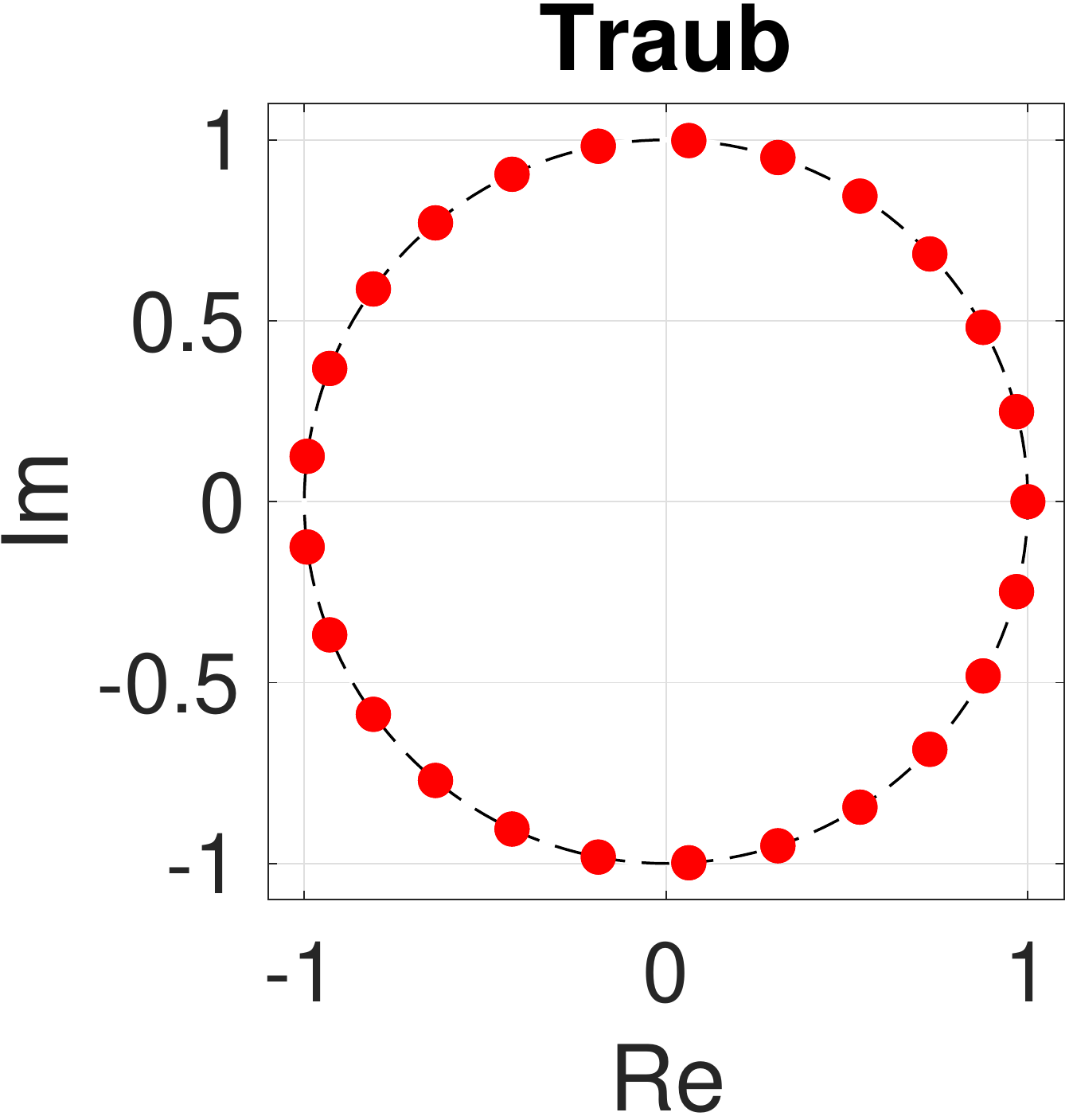}}\enskip &
\subfloat[$N=50$]{\label{fig:NM_50}\includegraphics[width=1\linewidth]{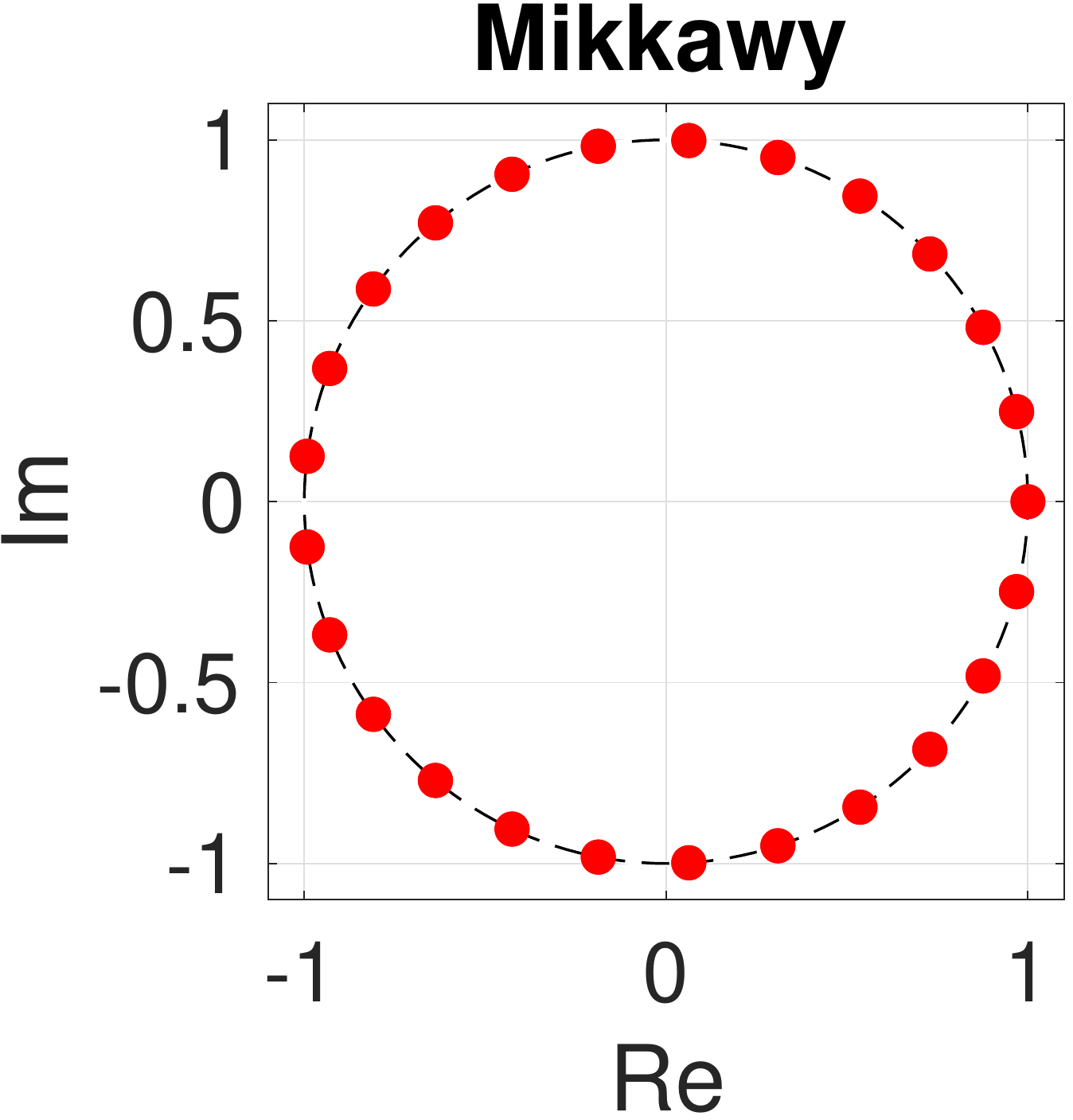}}\enskip &
\subfloat[$N=50$]{\label{fig:NY_50}\includegraphics[width=1\linewidth]{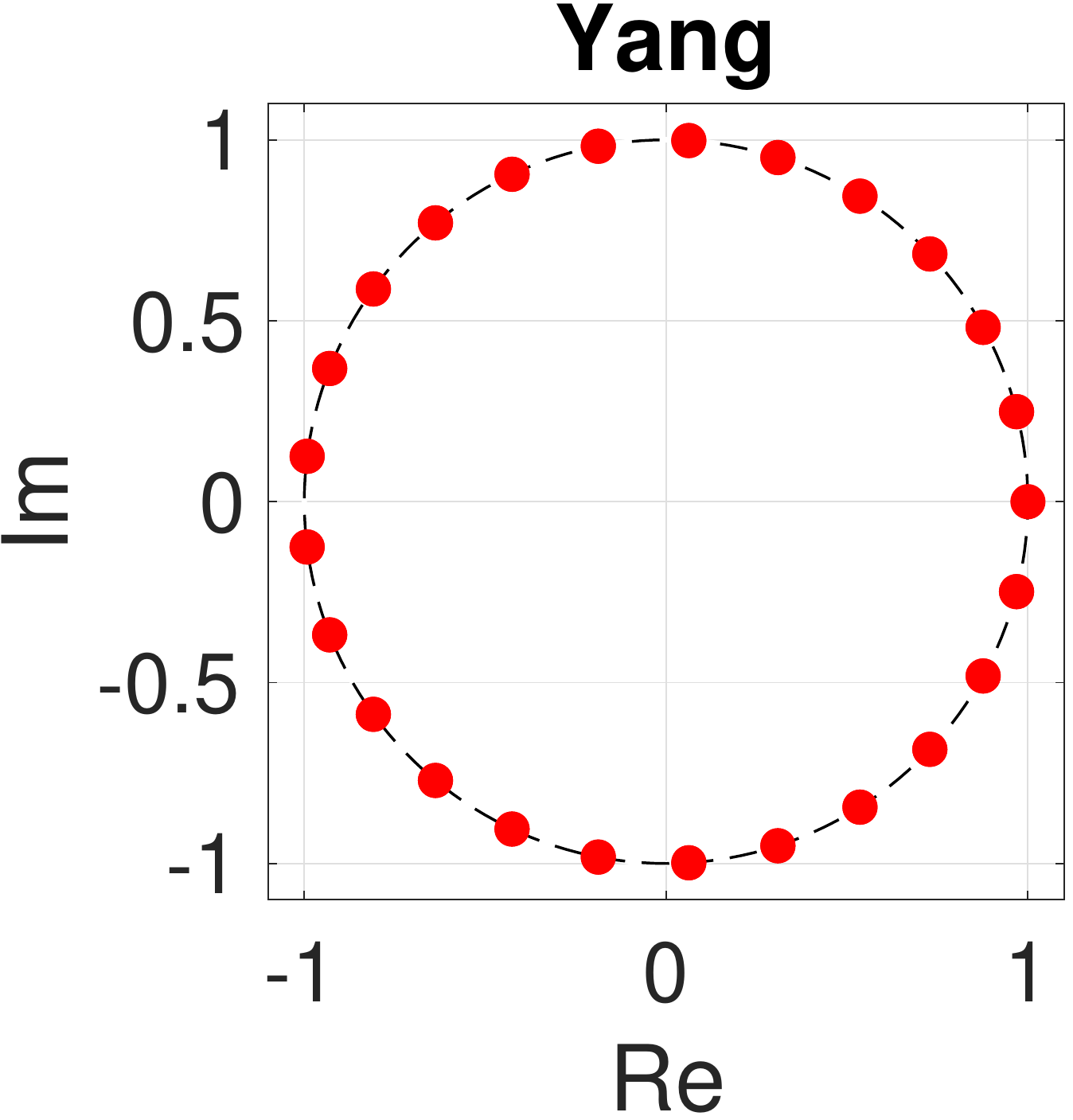}} \\
\multirow{2}{*}{\subfloat[The first consecutive $\frac{N}{2}$ samples ($N=8$)]{\label{fig:half_N}\includegraphics[width=.8\linewidth]{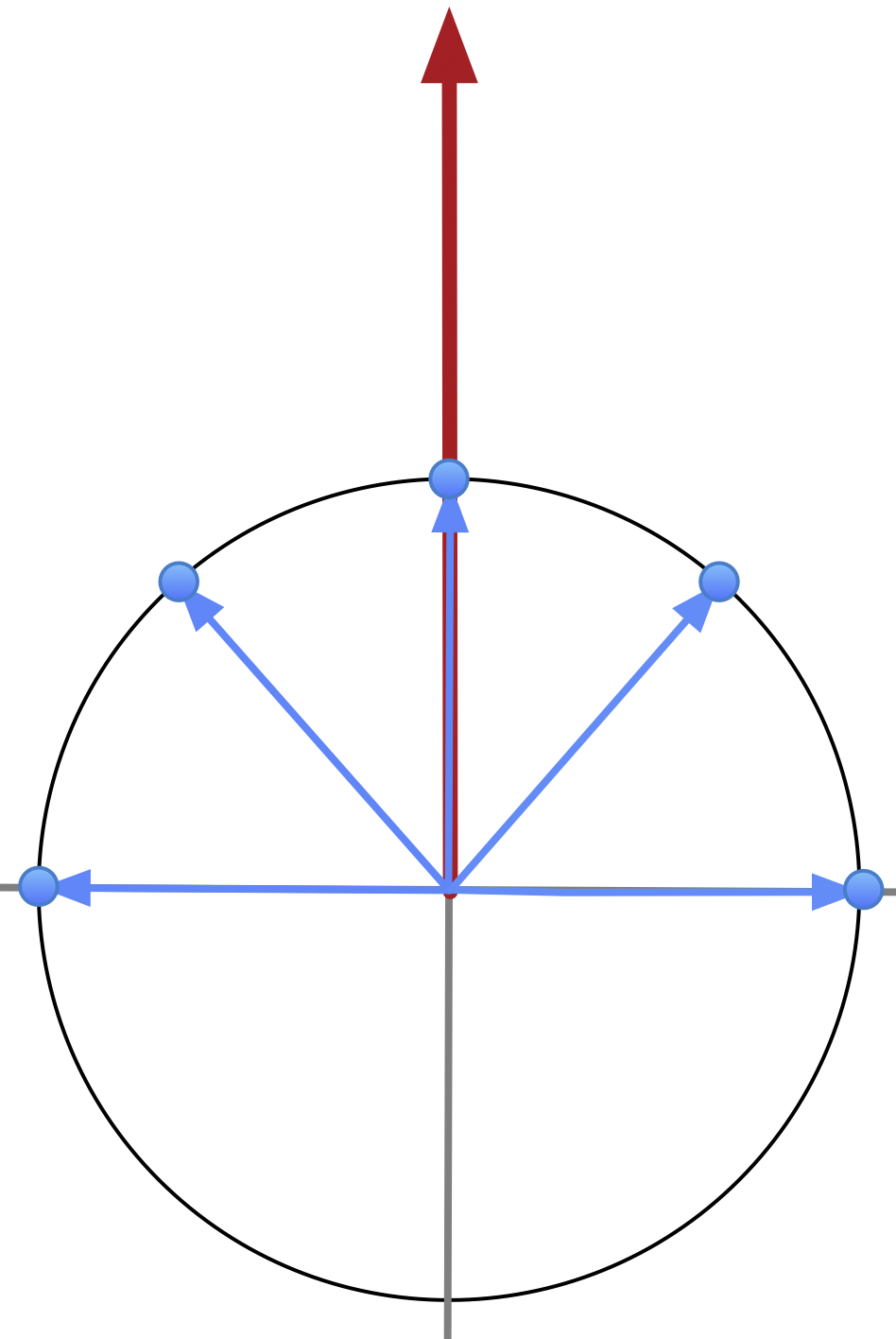}}} &
\subfloat[$N=64$]{\label{fig:N_64}\includegraphics[width=1\linewidth]{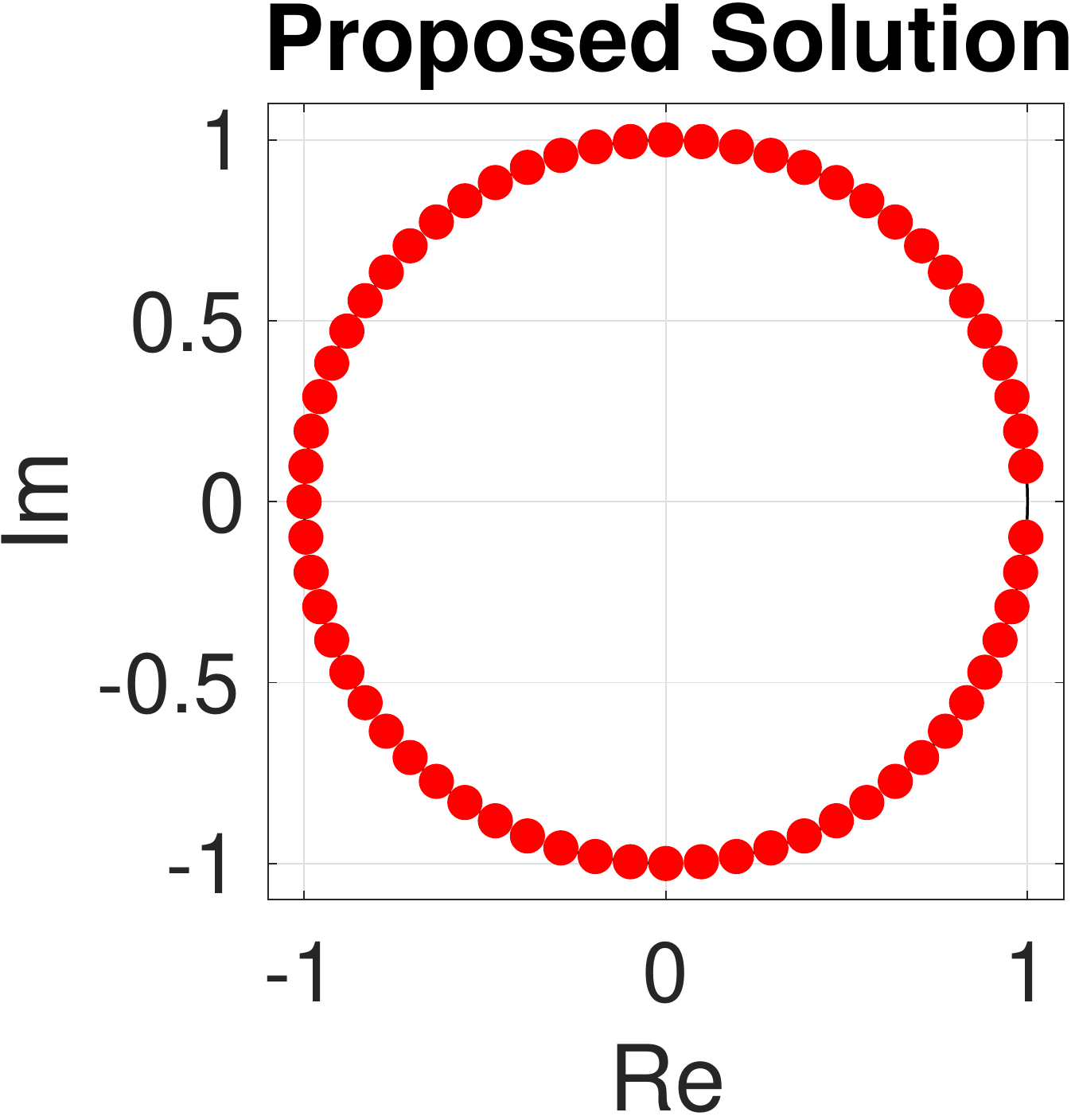}}  \enskip &
\subfloat[$N=64$]{\label{fig:NT_64}\includegraphics[width=1\linewidth]{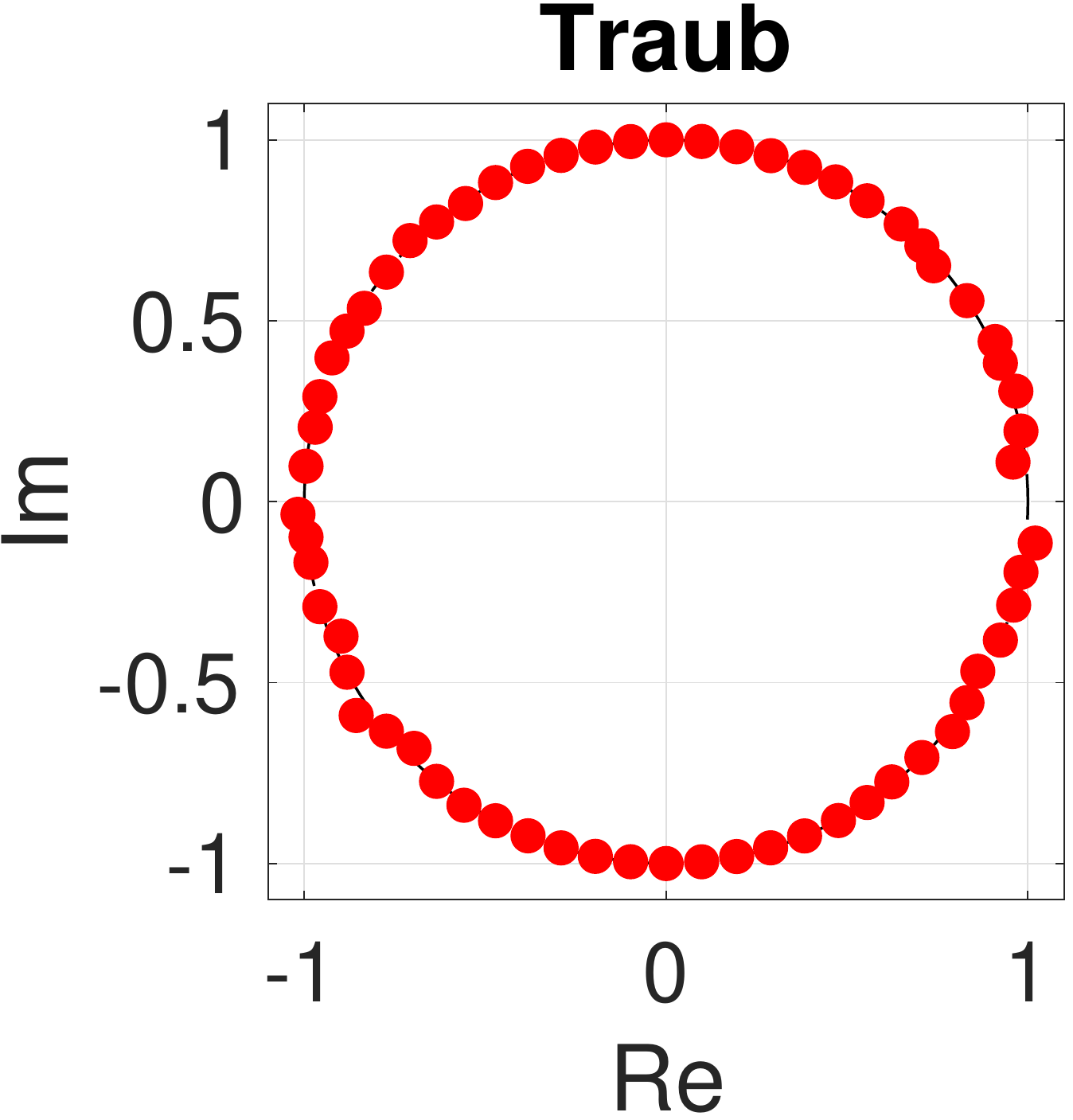}}\enskip &
\subfloat[$N=64$]{\label{fig:NM_64}\includegraphics[width=1\linewidth]{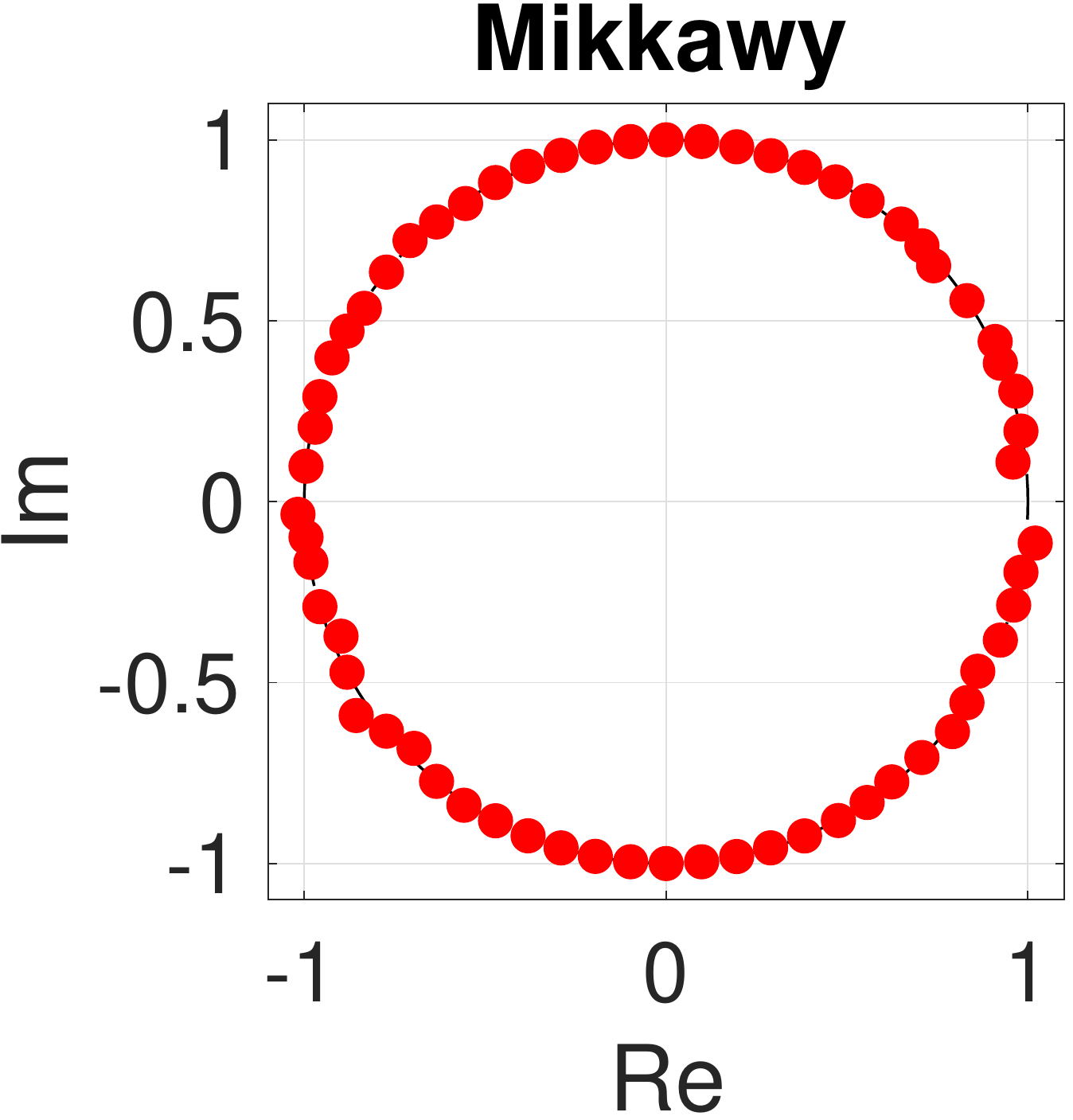}}\enskip &
\subfloat[$N=64$]{\label{fig:NY_64}\includegraphics[width=1\linewidth]{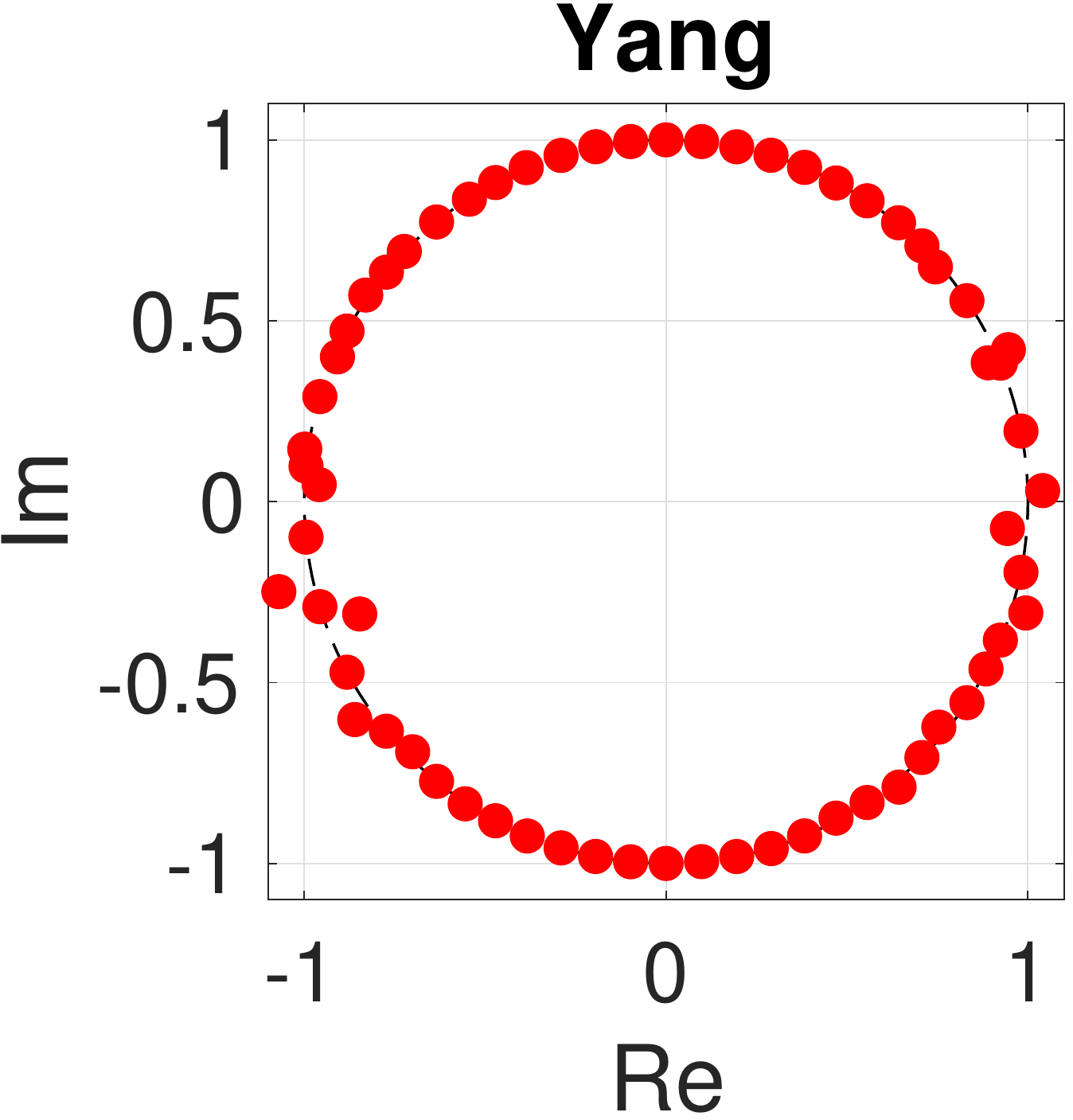}} \\
                                                                                                                                            &
\subfloat[$N=70$]{\label{fig:N_70}\includegraphics[width=1\linewidth]{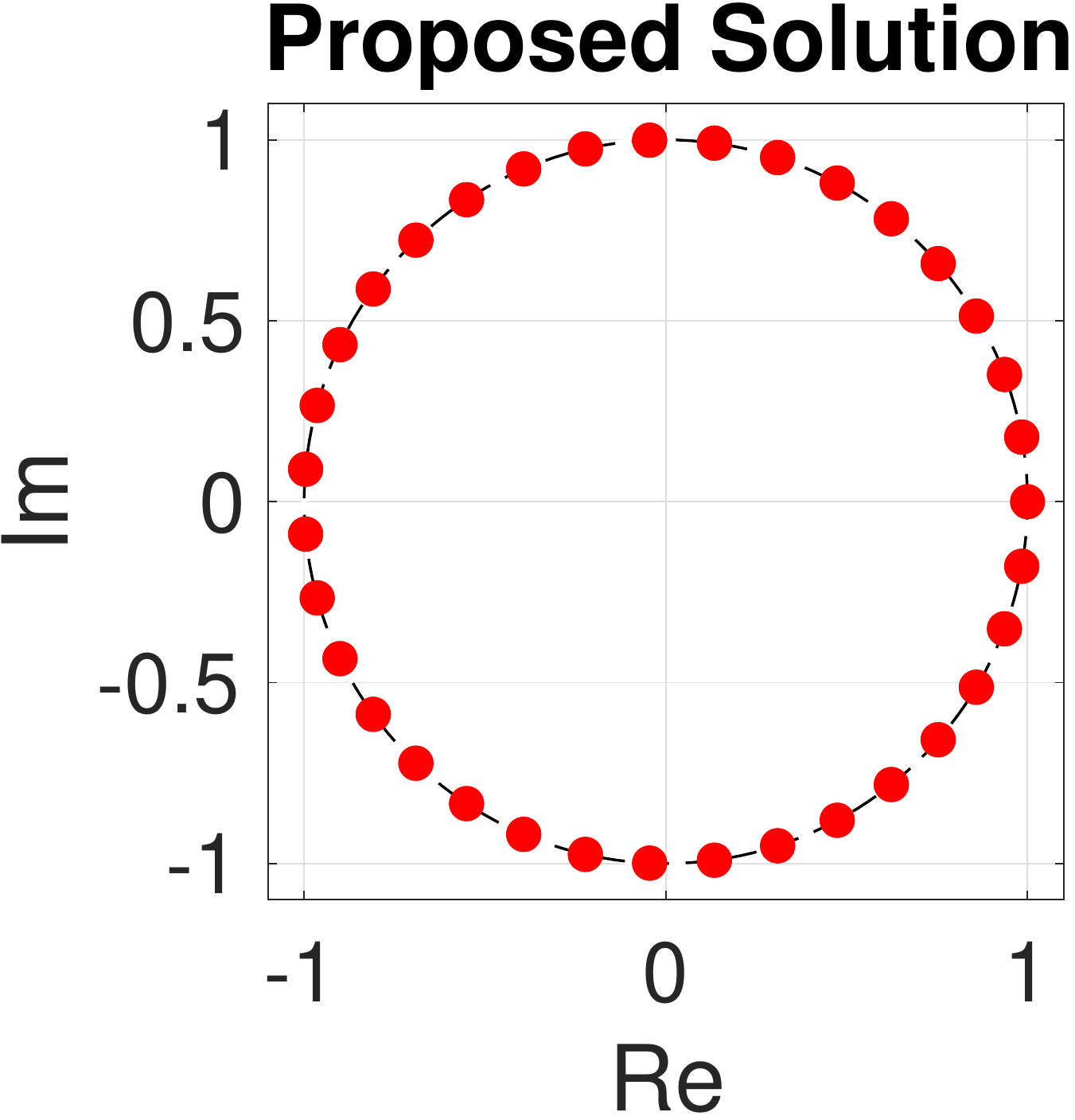}}\enskip &
\subfloat[$N=70$]{\label{fig:NT_70}\includegraphics[width=1\linewidth]{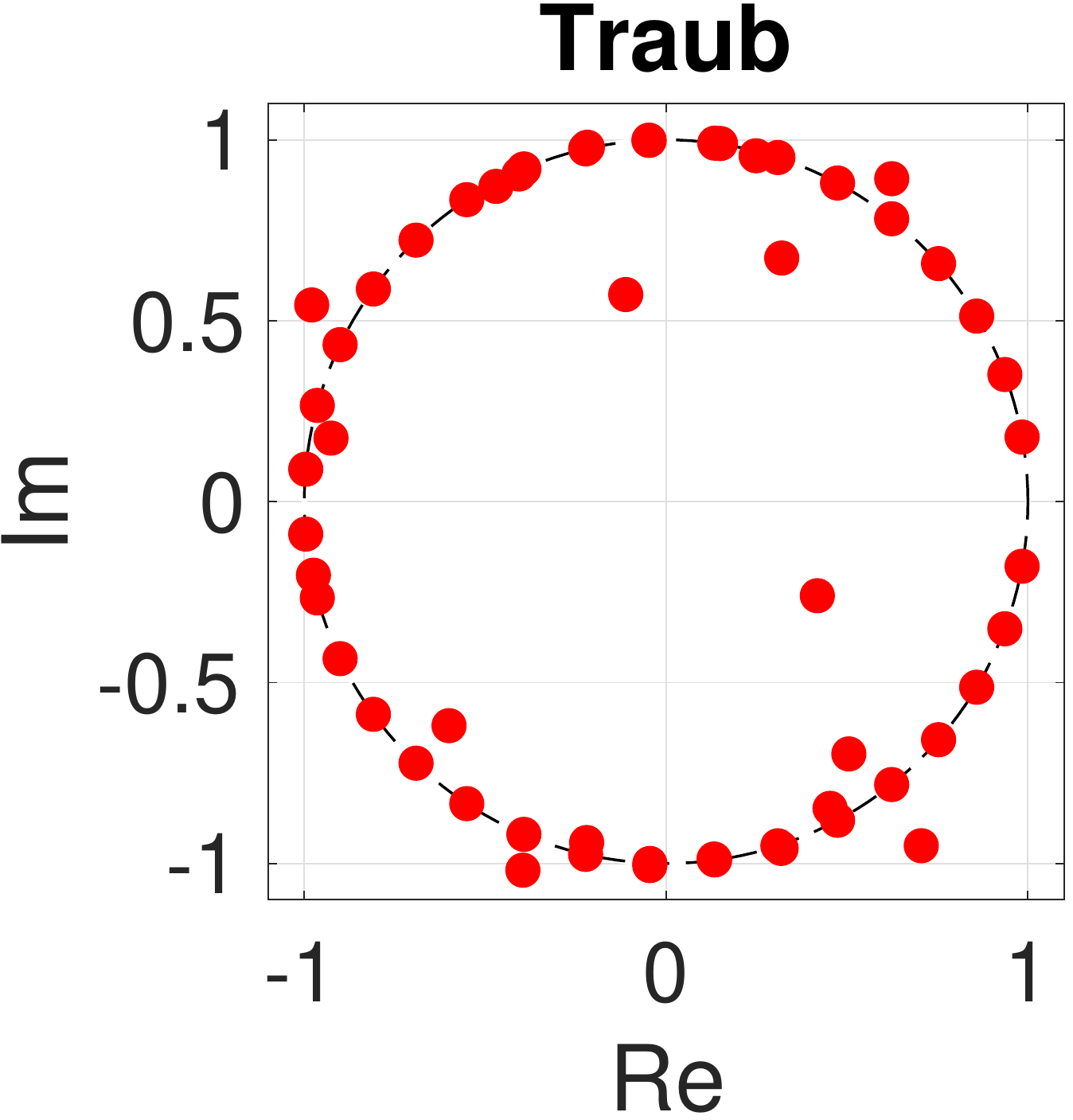}}\enskip &
\subfloat[$N=70$]{\label{fig:NM_70}\includegraphics[width=1\linewidth]{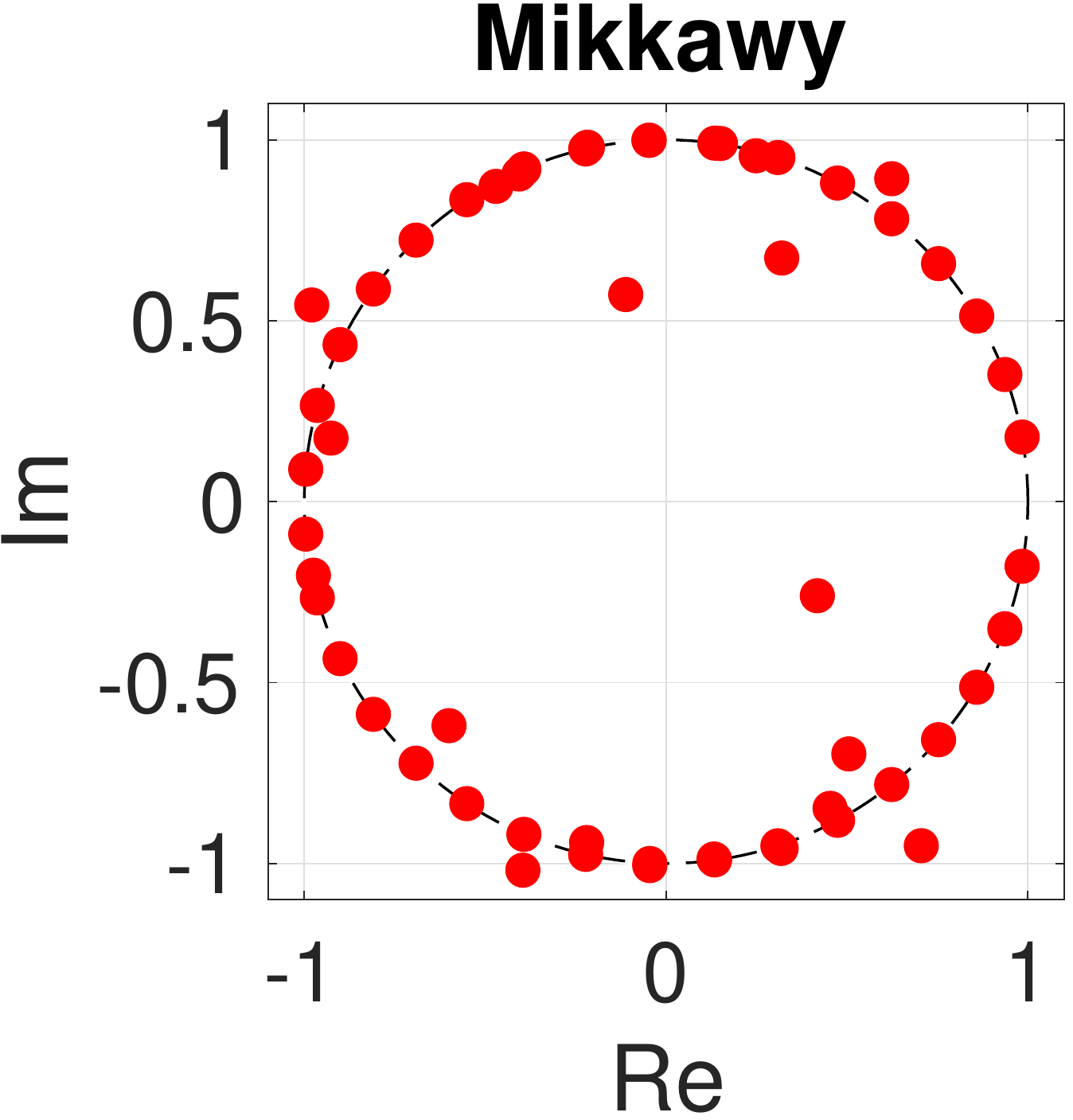}} &
\subfloat[$N=70$]{\label{fig:NY_70}\includegraphics[width=1\linewidth]{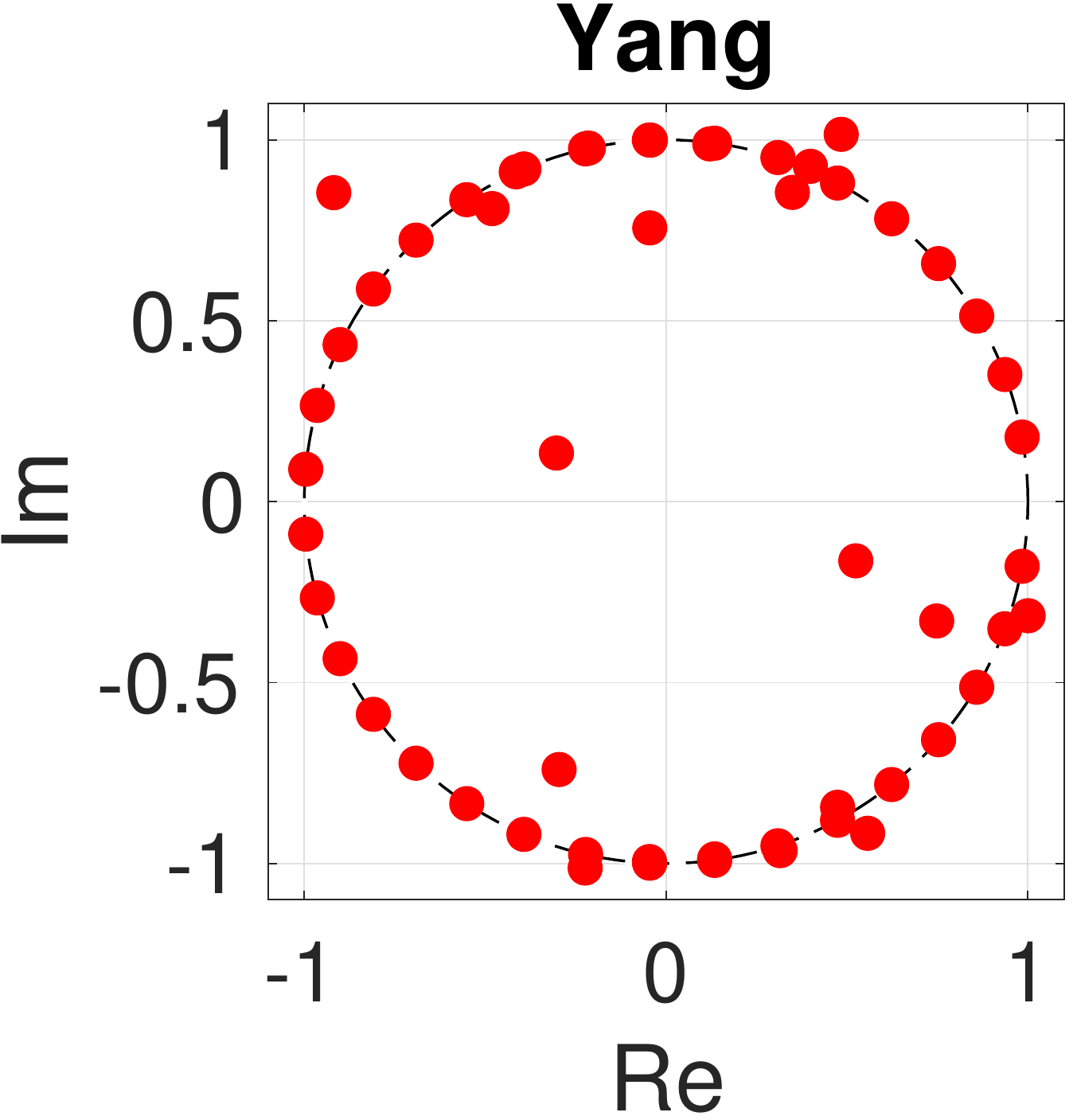}}																																																																			
\end{tabular}
\caption{\cref{fig:simple_case} and \cref{fig:half_N} are to help visualize the properties of the ESP using the roots of unity. Blue points are the samples while the red vector is the resulting imbalance caused by summing all the vectors. In \cref{fig:half_N}, only the horizontal components are canceled while the vertical components of the sample points accumulate and result in a very large offshoot. \cref{fig:NT_50} to \cref{fig:NY_70}) visually presents the significant gain in accuracy when the sampling nodes are the roots of unity. Inaccuracies can be seen in existing methods starting at $64$ samples and will exponentially get worse. Note that for certain sample sizes, such as $70$, the ESP mappings are not unique and hence overlap on top of each other.}\label{fig:bigfig}
\end{figure}
\subsection{Performance on the ESP Calculation}
Provided by the sample points from the $N$th roots of unity, our aim in this section is to calculate the ESP method defined in \cref{thm:hypercube_summation} and compare against the three methods listed in Section \ref{sec_ESP_introduction}. To analyze the stability of symmetric summation methods, we design the following experiment. Note that by dropping an arbitrary element $i$ from the $N$th roots of unity samples the ESP can be simplified to
\begin{align}
|\sigma_{N,n}|=1,~~~\text{for}~n\in\{0,1,\ldots,i-1,i+1,\ldots,N-1\}
\end{align}
The intuition behind this comes from the geometric symmetry of the $N$th roots of unity. Visualized in \cref{fig:simple_case} using the 4'th roots of unity, the roots can be thought of as vectors pointing from the origin to the unit circle. If a vector is removed, the complementary opposite vector on the unit circle will create an imbalance. This imbalance, along with the set being closed under multiplication, helps map the ESP onto the unit circle. The idea is similar for an odd number of samples. However, since an odd number of samples are not evenly distributed on the angle domain, the complementary opposite vector is a summation of components from multiple vectors. 

The results of $\sigma_{N,n}$ using the roots of unity sample set, for $N=\{50, 64, 70\}$ samples, are visualized in \cref{fig:bigfig} and summarized in \cref{fig:UC_Acc}. Both figures show the significant improvements in accuracy for the proposed ESP method against other state-of-the-art methods. Perturbations can be visually seen in \cref{fig:bigfig} at around $64$ samples for current ESP methods. Note that for some samples, the ESP may not map to unique nodes on the unit circle. For example in \cref{fig:N_70}, the proposed method maps the $70$ roots of unity onto only $36$ points. The inherent differences in the proposed algorithm allows significant accuracy gains when compared to the existing methods.

\begin{figure}[htp]
\centering 
\subfloat[Accuracy]{\label{fig:UC_Acc}\includegraphics[width=0.3\linewidth]{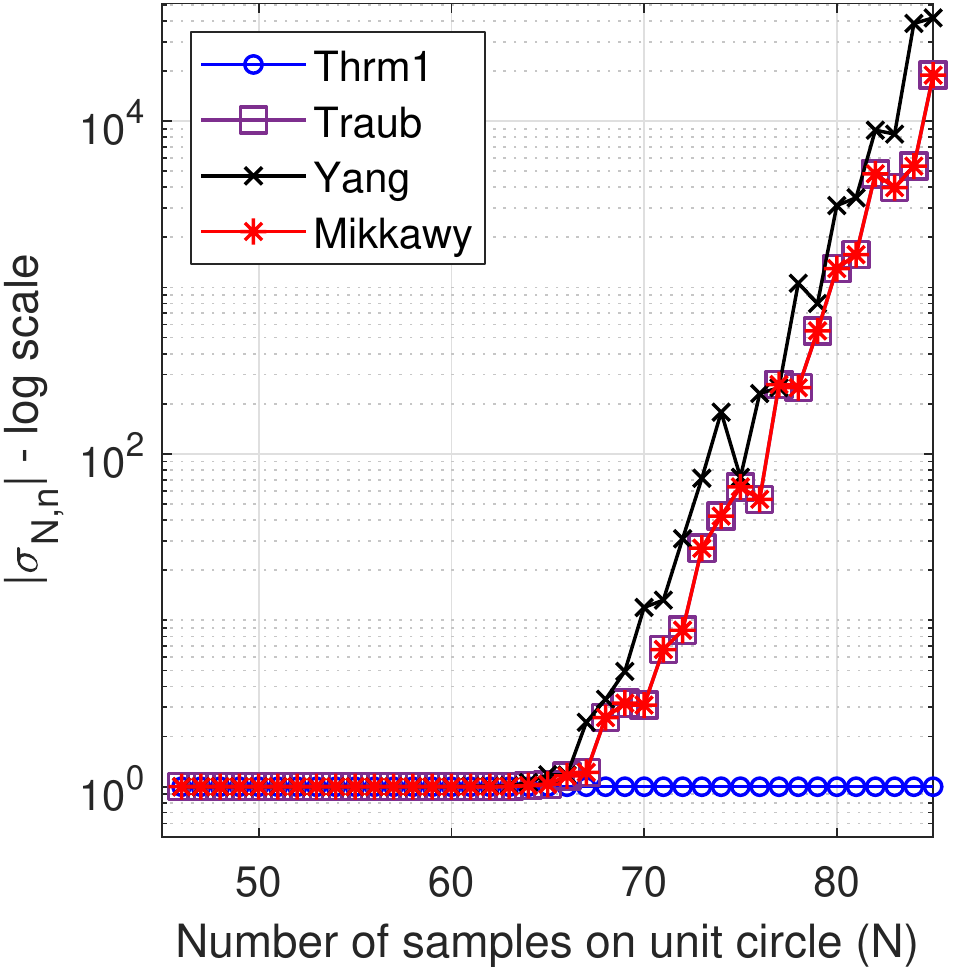}}\hspace{.2in}
\subfloat[Computation speed]{\label{fig:UC_time}\includegraphics[width=0.3\linewidth]{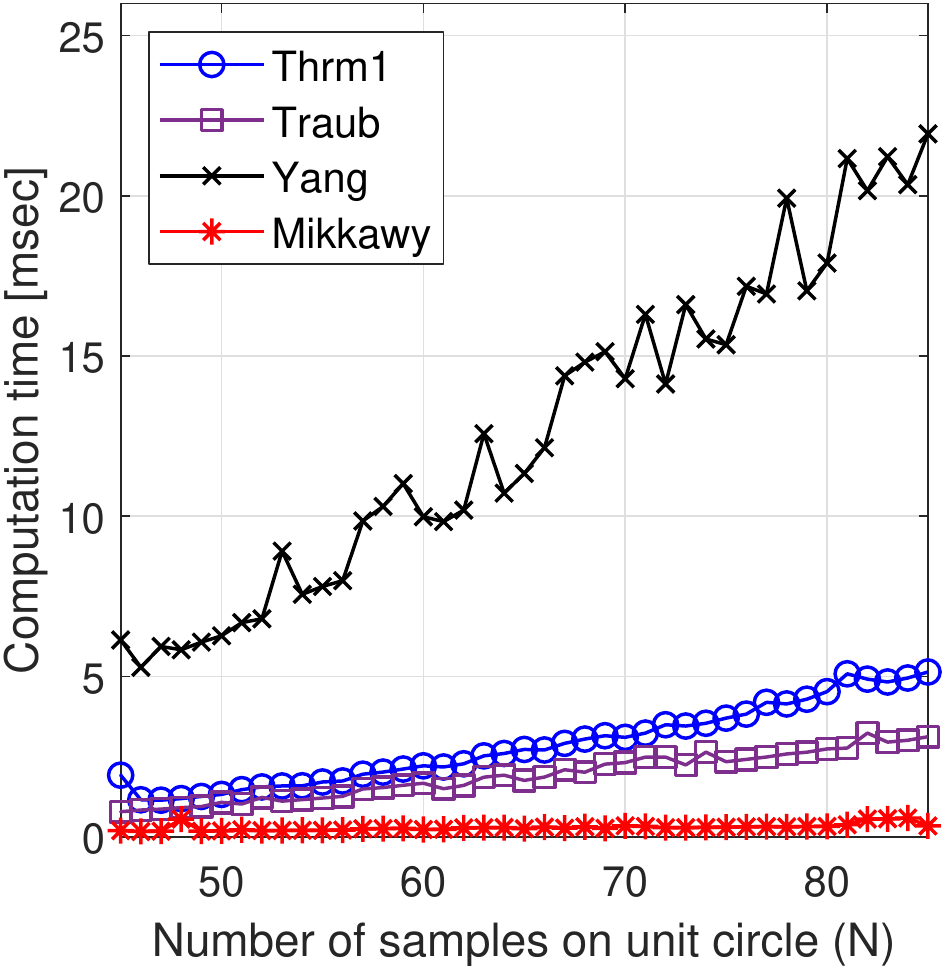}}
\caption{Performance analysis on various solutions to the elementary symmetric polynomials. \cref{fig:UC_Acc} accuracy measurement, the true answer is $10^0$. \cref{fig:UC_time} is measured using MATLAB's ``\texttt{timeit()}'' function. The speed is listed in seconds. }
\end{figure}

The main difference between the proposed and the existing ESP methods is the way the recursions incorporate past calculations defined in Section \ref{sec_ESP_introduction}. Specifically, the algorithm in \cite{Traub_1966} requires $\sigma_{N-1,n-1}$ and $\sigma_{N-2,n-1}$. The algorithm in \cite{Yang_2009} requires past calculations from $\sigma_{N-1-n,0}$ to $\sigma_{N-1,n-1}$. And the algorithm in \cite{Mikkawy_2003} requires $\sigma_{N-2,n}$. Using sample sets of various sizes will lead to the inaccuracies seen in \cref{fig:UC_Acc,fig:bigfig}. The equally distant samples on the unit circle require symmetry to map back to the unit circle. However, the existing algorithms take the first consecutive elements in the set. For example consider $\sigma_{\frac{N}{2},n}$ shown in \cref{fig:half_N} with $N=8$. The ESP methods will take the first $4$ samples for $\sigma_{\frac{N}{2},n}$. Geometrically, this translates to the top half of the unit circle. With only the top half of the circle, the calculations will be heavily clustered towards the top of the complex plane. This will map to a large vector that, by the architecture of the existing algorithms, will propagate throughout the output matrix. In contrast, $\sigma_{N-1,n-1}$ using the proposed solution in \cref{thm:main_vander} only requires $\sigma_{N-1,n-2}$. Therefore, avoiding the problem by retaining the full set of elements for the calculations.

The computational complexity analysis of the ESP methods is summarized in \cref{fig:UC_time}. The analysis is performed using a discrete range of samples $N=\{40,\cdots,85\}$ using a Monte-Carlo simulation with a $1000$ iterations. The proposed solution in \cref{thm:hypercube_summation} remains competitive with the top two ESP methods i.e. Traub and Mikkawy. Note that the summation of $C_n$ in every recursive step for \cref{thm:hypercube_summation} is the largest computational footprint of the algorithm. Still, the novel approach to the proposed method deviates from the trend seen from the existing solutions and allow a very stable calculation for the $N$th roots of unity.

%sample sets provided in the Fourier domain 
%\begin{align}
%\hat{f}_k= \sum^{N-1}_{n=0} f_k\exp{-i2\pi k n/N},~\text{where}~|k|\leq f_c.
%\end{align}
%Here, $f_c$ is the cutoff frequency band which avoids aliasing in the sampling paradigm.
\subsection{Performance on the Vandermonde Inverse Calculation}
The numerical instability caused by ESP methods are the leading sources of error for the Vandermonde inverses considered in this paper. A direct way to prove the claim is to compare the method results with the true inverse. However, the ill-conditioning of the Vandermonde make it impossible to conduct direct performance analysis. Our aim in this section is to establish an indirect evaluation framework to analyze the well-posedness of an inverse Vandermonde solution regardless of its node specification. Recall the Frobenius companion matrix $C_p$ of the monic polynomial $p(t)=c_1+c_2t+\cdots+c_Nt^{N-1}+t^{N}$ where the matrix is diagonalizable by means of a spectral decomposition 
\begin{align}
C_p = V^{-T} \Lambda V,~\text{where}~
\Lambda=
\begin{bmatrix}
v_1 & & \\
& \ddots & \\
& & v_N
\end{bmatrix}~\text{and}~
C_p =
\begin{bmatrix}
0 & 0 & \cdots & 0 & c_1 \\
1 & 0 & \cdots & 0 & c_2 \\
0 & 1 & \cdots & 0 & c_3 \\
\vdots & \vdots & \ddots & \vdots \\
0 & 0 & \cdots & 1 & c_{N}
\end{bmatrix}.\label{eq:Frobenius_companion_matrix}
\end{align}
The sample nodes $v_1,\cdots,v_N$ from the Vandermonde matrix $V_N$ are the eigenvalues associated with the roots of the corresponding monic polynomials. The left identity block matrix from the companion matrix $C_p$ in (\ref{eq:Frobenius_companion_matrix}) is an intriguing framework where it can be used to evaluate the well-posedness of a numerical solution to the Vandermonde matrix inversion. We define the normalized mean square error $\text{NMSE}(\tilde{I}, I) = ||\tilde{I}-I||/||I||$ to measure the skewness of the estimated block identity matrix $\tilde{I}$ (obtained from the above spectral decomposition) from the true identity matrix $I$. This error evaluates the performance of a particular inverse Vandermonde calculation method in terms of its numerical stability and accuracy.

\begin{table}[htp]
\caption{Recovery error $\text{NMSE}(\tilde{I}, I)$ obtained from seven different combinations for inverse Vandermonde calculations. The error is shown for different discritization of the $N$th roots of unity.}
\label{table_iV_varying_N}
\center
\scriptsize{
\begin{tabular}{c|ccc|cccc}
\hlinewd{1.3pt}
\hspace{-.06in}\multirow{2}{*}{$N$}&\multicolumn{3}{c|}{$V_{N}^{-1}$ by Eisinberg \cite{EisinbergFedele2006}} & \multicolumn{4}{c}{$V_{N}^{-1}$ by \cref{thm:main_vander}}\\ \cline{2-8}
& \hspace{-.03in}Traub \cite{Traub_1966} & \hspace{-.03in}Yang \cite{Yang_2009} & \hspace{-.03in}\cref{thm:hypercube_summation} & \hspace{-.03in}Traub \cite{Traub_1966} & \hspace{-.03in}Mikkawy \cite{Mikkawy_2003} & \hspace{-.03in}Yang \cite{Yang_2009} & \hspace{-.03in}\cref{thm:hypercube_summation}\\
\hlinewd{1.3pt}
\hspace{-.06in}$5$  & $5.68\mathrm{e}{-16}$ & $6.00\mathrm{e}{-16}$ & $3.67\mathrm{e}{-16}$ & $3.62\mathrm{e}{-16}$ & $3.62\mathrm{e}{-16}$ & $3.55\mathrm{e}{-16}$ & $3.31\mathrm{e}{-16}$\\ \cline{1-8}
\hspace{-.06in}$10$ & $1.57\mathrm{e}{-14}$ & $1.49\mathrm{e}{-14}$ & $8.54\mathrm{e}{-16}$ & $4.51\mathrm{e}{-15}$ & $4.51\mathrm{e}{-15}$ & $4.09\mathrm{e}{-15}$ & $7.01\mathrm{e}{-16}$\\ \cline{1-8}
\hspace{-.06in}$15$ & $3.19\mathrm{e}{-13}$ & $3.21\mathrm{e}{-13}$ & $1.45\mathrm{e}{-15}$ & $7.51\mathrm{e}{-14}$ & $7.51\mathrm{e}{-14}$ & $7.62\mathrm{e}{-14}$ & $1.06\mathrm{e}{-15}$\\ \cline{1-8}
\hspace{-.06in}$20$ & $5.65\mathrm{e}{-12}$ & $7.02\mathrm{e}{-12}$ & $1.83\mathrm{e}{-15}$ & $1.18\mathrm{e}{-12}$ & $1.18\mathrm{e}{-12}$ & $1.43\mathrm{e}{-12}$ & $1.43\mathrm{e}{-15}$\\ \cline{1-8}
\hspace{-.06in}$25$ & $1.12\mathrm{e}{-10}$ & $1.47\mathrm{e}{-10}$ & $2.28\mathrm{e}{-15}$ & $2.12\mathrm{e}{-11}$ & $2.12\mathrm{e}{-11}$ & $2.80\mathrm{e}{-11}$ & $1.84\mathrm{e}{-15}$\\ \cline{1-8}
\hspace{-.06in}$30$ & $2.09\mathrm{e}{-09}$ & $3.01\mathrm{e}{-09}$ & $3.12\mathrm{e}{-15}$ & $3.74\mathrm{e}{-10}$ & $3.74\mathrm{e}{-10}$ & $5.39\mathrm{e}{-10}$ & $2.41\mathrm{e}{-15}$\\ \cline{1-8}
\hspace{-.06in}$35$ & $3.72\mathrm{e}{-08}$ & $6.34\mathrm{e}{-08}$ & $3.62\mathrm{e}{-15}$ & $6.46\mathrm{e}{-09}$ & $6.46\mathrm{e}{-09}$ & $1.05\mathrm{e}{-08}$ & $2.90\mathrm{e}{-15}$\\ \cline{1-8}
\hspace{-.06in}$40$ & $6.93\mathrm{e}{-07}$ & $1.17\mathrm{e}{-06}$ & $4.04\mathrm{e}{-15}$ & $1.16\mathrm{e}{-07}$ & $1.16\mathrm{e}{-07}$ & $1.92\mathrm{e}{-07}$ & $3.38\mathrm{e}{-15}$\\ \cline{1-8}
\hspace{-.06in}$45$ & $1.44\mathrm{e}{-05}$ & $2.33\mathrm{e}{-05}$ & $4.99\mathrm{e}{-15}$ & $2.22\mathrm{e}{-06}$ & $2.22\mathrm{e}{-06}$ & $3.67\mathrm{e}{-06}$ & $4.03\mathrm{e}{-15}$\\ \cline{1-8}
\hspace{-.06in}$50$ & $2.25\mathrm{e}{-04}$ & $4.66\mathrm{e}{-04}$ & $5.22\mathrm{e}{-15}$ & $3.55\mathrm{e}{-05}$ & $3.55\mathrm{e}{-05}$ & $6.91\mathrm{e}{-05}$ & $4.50\mathrm{e}{-15}$\\
\hlinewd{1.3pt}
\end{tabular}
}
\end{table}

\cref{table_iV_varying_N} demonstrates the error recovery of the inverse Vandermonde calculations using seven different combinations: (a) the Eisinberg et.al. \cite{EisinbergFedele2006} inverse method utilized by three possible ESPs; and (b) the proposed \cref{thm:main_vander} inverse method utilized by four possible ESPs. The nodes are sampled using the $N$th roots of unity with no noise. The progression of the error performance is shown for different discrete samples in the table. Notice the high robustness of the proposed solution throughout different node samples that clearly shows the impact from ESP accuracy for inverse calculations.

Using the same seven combinations for evaluation, we study the robustness of the Vandermonde inversion methods on perturbed sample nodes measured in the vicinity of the $Nth$ roots of unity. We employ two different contaminating factors on the nodes by
\begin{align}
v_n = e^{i2\pi n/N +\eta_{S}} + \eta_M,\label{eq:node_complex_Nth_root}
\end{align}
where $\eta_S\sim\mathcal{N}(0,\sigma^2_S)$ simulates sampling irregularities by randomly shifting the nodes along the roots of unity, and $\eta_M\sim\mathcal{N}(0,\sigma^2_M)$ is the noise magnitude perturbing the the frequency samples. 

\begin{figure}[htp] 
\centering 
\subfloat[Eisin/Prop.]{\label{fig:iV_Eisin_ESP_1_noise_variation}\includegraphics[width=0.225\linewidth]{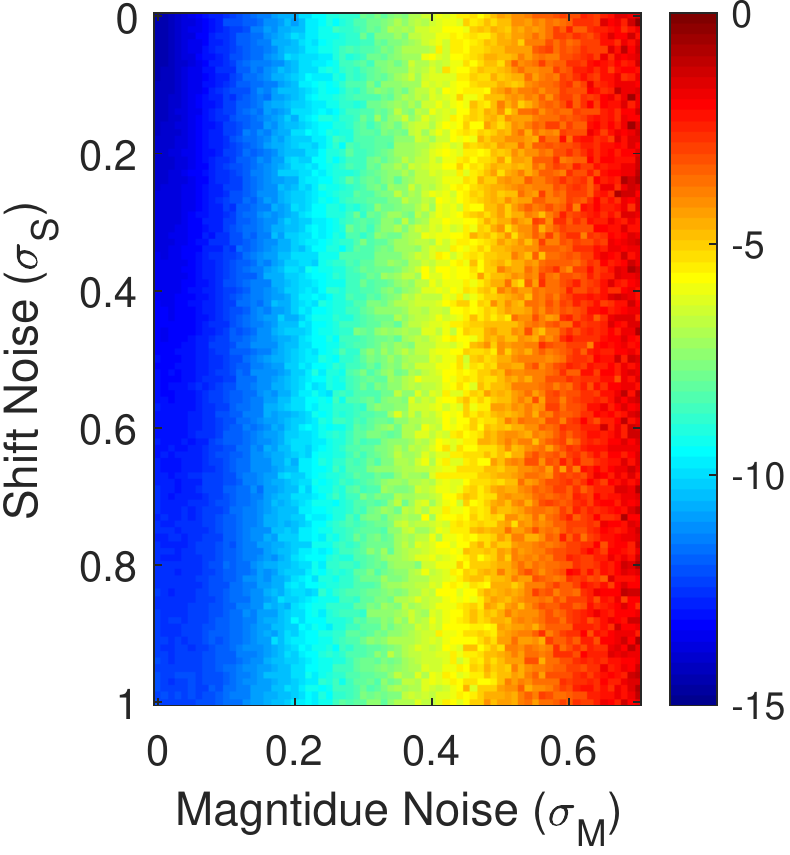}} \enskip
\subfloat[Eisin/Traub]{\label{fig:iV_Eisin_ESP_2_noise_variation}\includegraphics[width=0.225\linewidth]{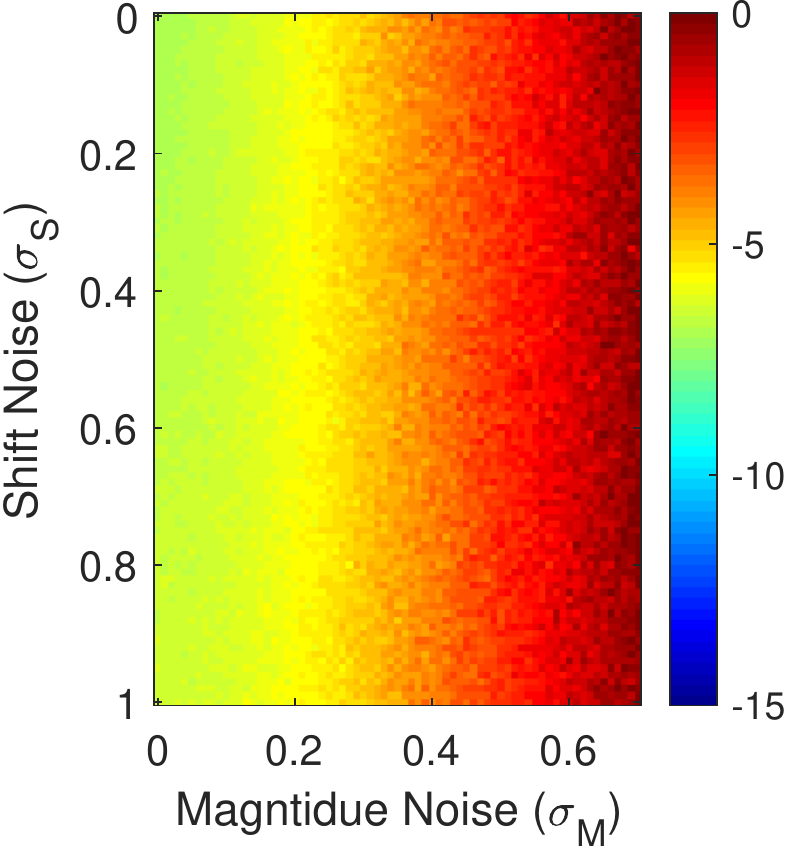}} \enskip
\subfloat[Eisin/Yang]{\label{fig:iV_Eisin_ESP_3_noise_variation}\includegraphics[width=0.225\linewidth]{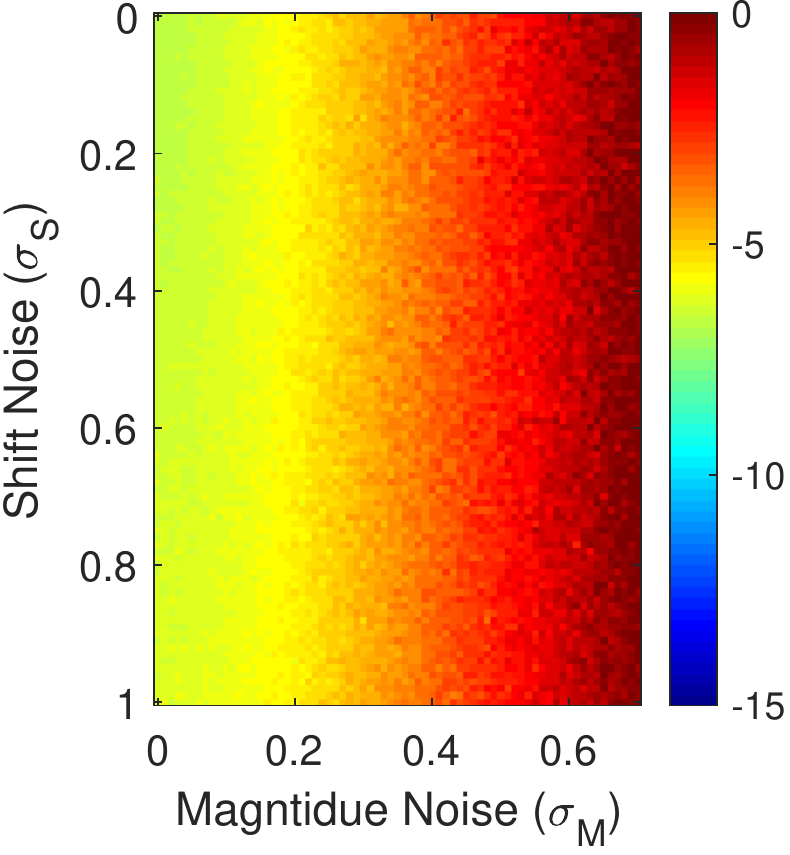}} \\
\subfloat[Prop./Prop.]{\label{fig:iV_proposed_ESP_1_noise_variation}\includegraphics[width=0.225\linewidth]{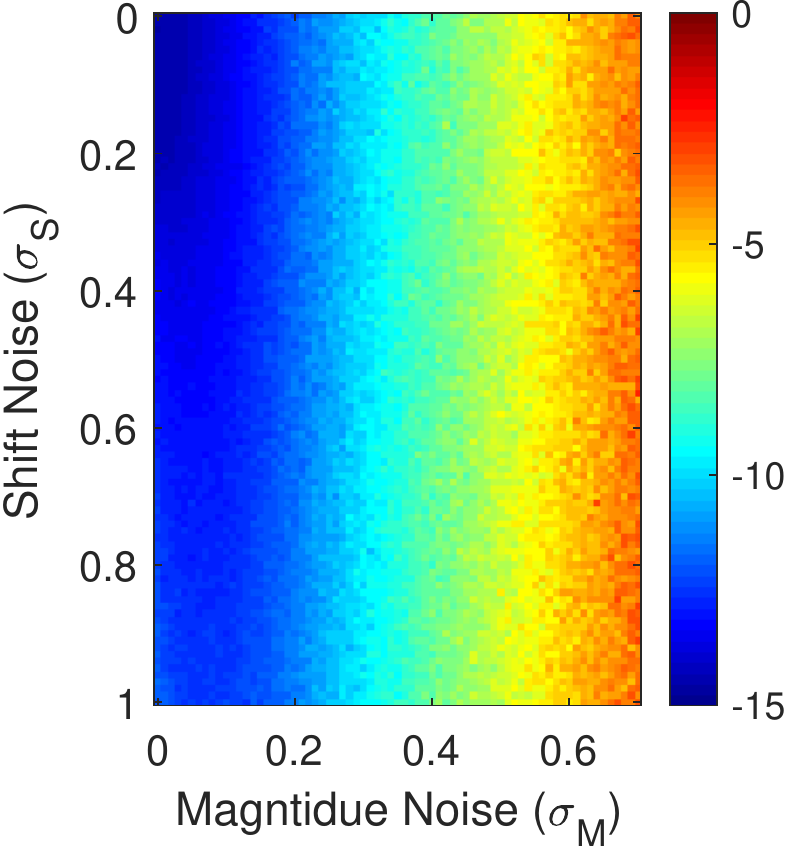}} \enskip
\subfloat[Prop./Traub]{\label{fig:iV_proposed_ESP_2_noise_variation}\includegraphics[width=0.225\linewidth]{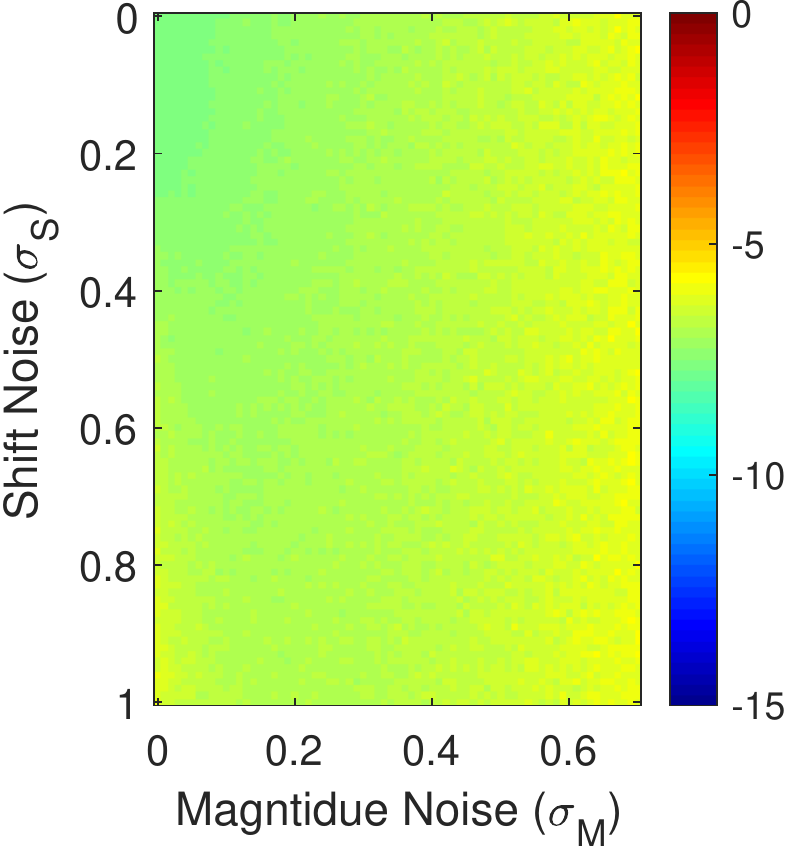}} \enskip
\subfloat[Prop./Yang]{\label{fig:iV_proposed_ESP_3_noise_variation}\includegraphics[width=0.225\linewidth]{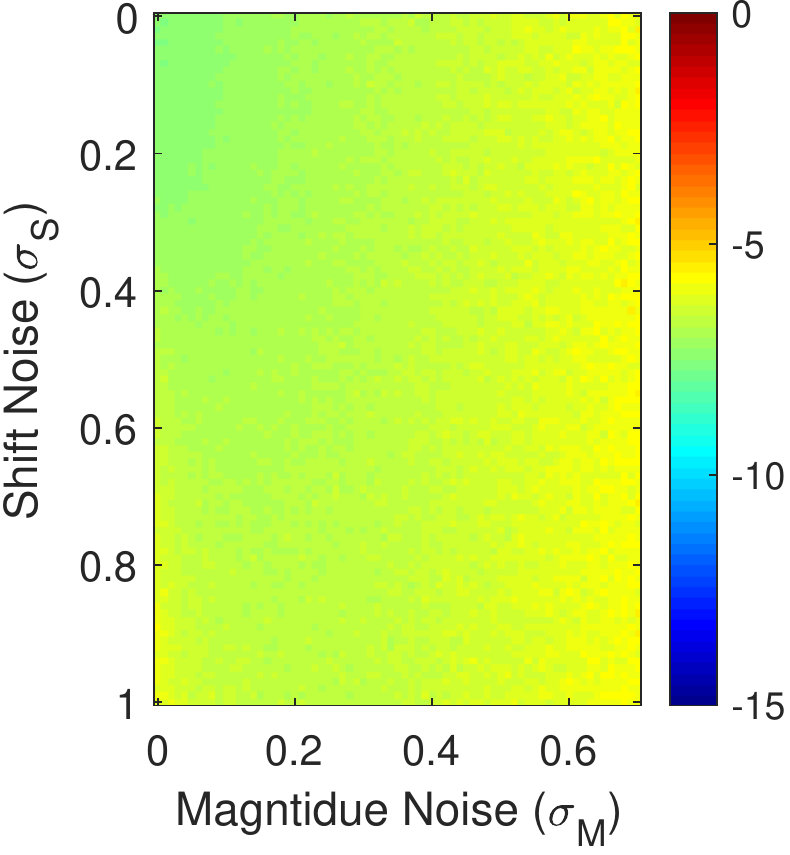}} \enskip
\subfloat[Prop./Mikkawi]{\label{fig:iV_proposed_ESP_4_noise_variation}\includegraphics[width=0.225\linewidth]{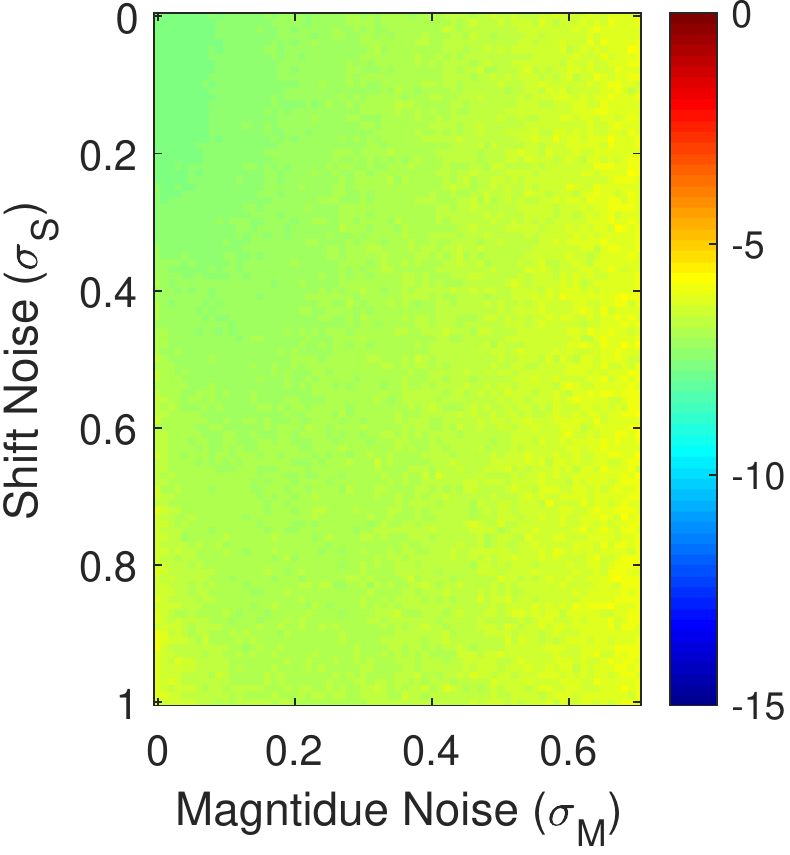}}
\label{fig:iV_noise_variation_Nth_roots_unity}
\caption{Numerical results of Vandermonde inverse calculation, populated 37 samples of complex nodes defined in \cref{eq:node_complex_Nth_root} by modeling two different perturbations for sampling irregularities and frequency magnitudes. Two Vandermonde inverse methods (i.e. Eisinberg \cite{EisinbergFedele2006}, and our proposed \cref{thm:main_vander}) are utilized by four different ESP methods (i.e.  Traub \cite{Traub_1966}, Yang \cite{Yang_2009}, Mikkawy \cite{Mikkawy_2003}, and our proposed \cref{thm:hypercube_summation}) for comparison. The spectral analysis framework defined in \cref{eq:Frobenius_companion_matrix} is used for numerical validations. The shade of colors corresponds to $\log_{10}$ magnitude of error i.e. $\log_{10}\text{NMSE}(\tilde{I},I)$.}\label{fig:Nth_root_noise_variation}
\end{figure}

\cref{fig:iV_noise_variation_Nth_roots_unity} demonstrates the numerical accuracy of the Vandermonde inverse methods. We test the performance of each inverse method by utilizing four different ESP methods i.e. Traub \cite{Traub_1966}, Yang \cite{Yang_2009}, Mikkawy \cite{Mikkawy_2003}, and the proposed \cref{thm:hypercube_summation}. The nodes of the Vandermonde matrix used in \cref{fig:iV_noise_variation_Nth_roots_unity} are the perturbed complex nodes defined in \cref{eq:node_complex_Nth_root} and the number of samples considered to discretize the complex nodes is $37$ in this experiment. In \cref{fig:iV_noise_variation_Nth_roots_unity}, the error is shown in logarithmic scale where the transition of color shades from blue to the red corresponds from low to the high error magnitude, respectively. The standard deviations $\sigma_S$ and $\sigma_M$ are also shown in normalized scale. Notice the robustness of the proposed solution over wide selection of irregular node shifts. For example in \cref{fig:iV_proposed_ESP_1_noise_variation}, the \cref{thm:main_vander} inverse method with the \cref{thm:hypercube_summation} ESP method at a noise level of $\sigma^2_S=0.2$ and $\sigma^2_M=0.1$ has a $6.12$ ($log_{10}$ scale) lower NMSE than the best competing method. Furthermore, \cref{thm:main_vander} significantly improves the robustness of all current ESP methods discussed in this paper from noise magnitudes $\sigma^2_M$ shown in \cref{fig:iV_proposed_ESP_2_noise_variation}-\cref{fig:iV_proposed_ESP_4_noise_variation}.

\section{Experiments}\label{sec:app}
A comprehensive library code written in MATLAB called \texttt{GVAN} is provided along the submission of this paper, available to download at \footnote{\url{https://github.com/mahdihosseini/GVAN/}}, which includes the numerical implementation of \cref{thm:hypercube_summation}, \cref{thm:main_vander}, and existing ESP and inverse methods from the literature for comparison. For more information on this library, please refer to the \texttt{GVAN} user's guidelines and provided demo examples.

In this section, we evaluate the performance of our proposed Vandermonde inverse on one-dimensional interpolation problem; a variant of super-resolution problem. The problem is to solve the system of equation $V^T c = f$ with variables defined in \cref{eq:variables} for finite number of samples. The estimated coefficients are used after for interpolating super-resolved nodes i.e.
\begin{align}
V_N^T c = f_N \rightarrow c = V_N^{-T} f_N \xrightarrow[\text{}]{\text{Use }c} \tilde{f}_{2N} =V_{2N}^{T} c.
\end{align} 
Three analytical functions i.e. $f(x)\in\{\cos(2\pi t x), \tanh(tx), \exp(tx)\}$ are employed to obtain initial samples for interpolation. The nodes for sampling are defined by: (1) equidistant; (2) Chebyshev; (3) extended-Chebyshev; (4) Gauss-Lobbatto (extrema Chebyshev); and (5) $N$th-roots of Unity on the complex plane. The definitions of nodes for (1)-(4) can be found in \cite{EisinbergFedele2006}. Note that the fixed variables for these experiments are chosen based on unique characteristics and usefulness. For example, harmonics is a simple function with only one frequency coefficeint whereas $\tanh$ has a wider frequency spectrum and isolates edge behaviors (such as images). These two functions are chosen because they provide a broad range of analysis for interpolation. The Chebyshev and Extrema Chebyshev nodes were chosen because they are the best and commonly used choice in minimizing the effects of the Runge phenomenon \cite{cheby_facts}. In contrary, the $N$th roots of unity is chosen as it manifests the hardcore problem for super-resolution \cite{candes2014towards, superResolution, batenkov2019super, batenkov2019rethinking}. The normalized mean square error, defined in previous section, is chosen here as the performance metric to analyze the error relative to the true signal i.e. $\text{NMSE}(f_{2N}, \tilde{f}_{2N})$. All experiments are implemented and analyzed in MATLAB R2018b on an 2.9GHz Intel Core i7 machine with 16GB of 2133MHz memory.

Given by two Vandermonde inverse methods and possible ESP solutions, the possible combinations for Vandermonde inversion is summarized in \cref{tab:table_summary} by possible utilization of ESPs in different inverse framework.
\begin{table}[h!]
\caption{Summary of parameter design for Vandermonde inverse calculation. Note that the inherent differences in the Mikkawy solution prevents it from being used in the Eisinberg et.al. inverse.}\label{tab:table_summary}
\center
\scriptsize{
\begin{tabular}{*{5}{c}}% <-- Alignments: 1st column left, 2nd middle and 3rd right, with vertical lines in between
\hlinewd{1.3pt}
\multirow{2}{*}{Inverse Method} &\multicolumn{4}{c}{ESP Solution}\\
&Proposed \cref{thm:hypercube_summation}&{Traub \cite{Traub_1966}}&{Yang \cite{Yang_2009}}&{Mikkawy \cite{Mikkawy_2003}}\\
\hline
Proposed \cref{thm:main_vander} & \checkmark&\checkmark & \checkmark& \checkmark\\
Eisinberg et.al. \cite{EisinbergFedele2006}& \checkmark&\checkmark &\checkmark& $\times$\\   
\hlinewd{1.3pt}
\end{tabular}
}
\end{table}

\cref{fig:nmse_samples} demonstrates the recovery error of the above-mentioned interpolation problem using different analytical functions and sampling nodes. Notice the huge improvement of recovery using $N$th-roots of unity on all three function sampling. This is in concordance with the results discussed in \cref{sec:elementary} where the ESP calculation using \cref{thm:hypercube_summation} is the leading source of such a boost. We also obtained competitive performance using the other sampling nodes (1)-(4) mentioned above. We noticed that for sample nodes in (1)-(4) the accuracy of recovery on signal boundaries hampered the overall results for comparison. Therefore, we excluded seven nodes from both boundaries to minimize any uncontrollable errors and characteristics that happen at the boundaries such as Runge's Phenomenon. For Nth roots of unity sample node at (5) we considered the whole sample domain for error reporting. We have further noticed that on certain sample measurements such as sinusoid and exponential functions, the Eisinberg's framework, associated with our ESP calculation method, provides even further improvement on the recovery error.

\begin{figure}[tbhp]
\centering 
\subfloat[$\sin$: Equid]{\label{fig:NMSE_equidistant_sin}\includegraphics[width=0.2\linewidth]{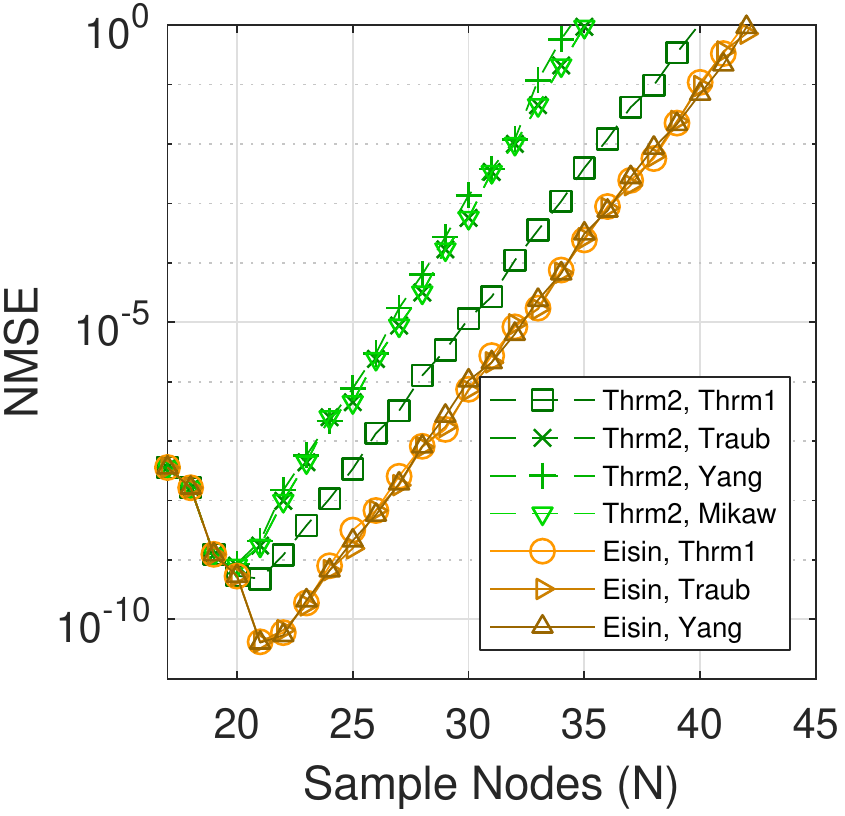}}
\subfloat[$\sin$: Cheby]{\label{fig:NMSE_chebyshev_sin}\includegraphics[width=0.2\linewidth]{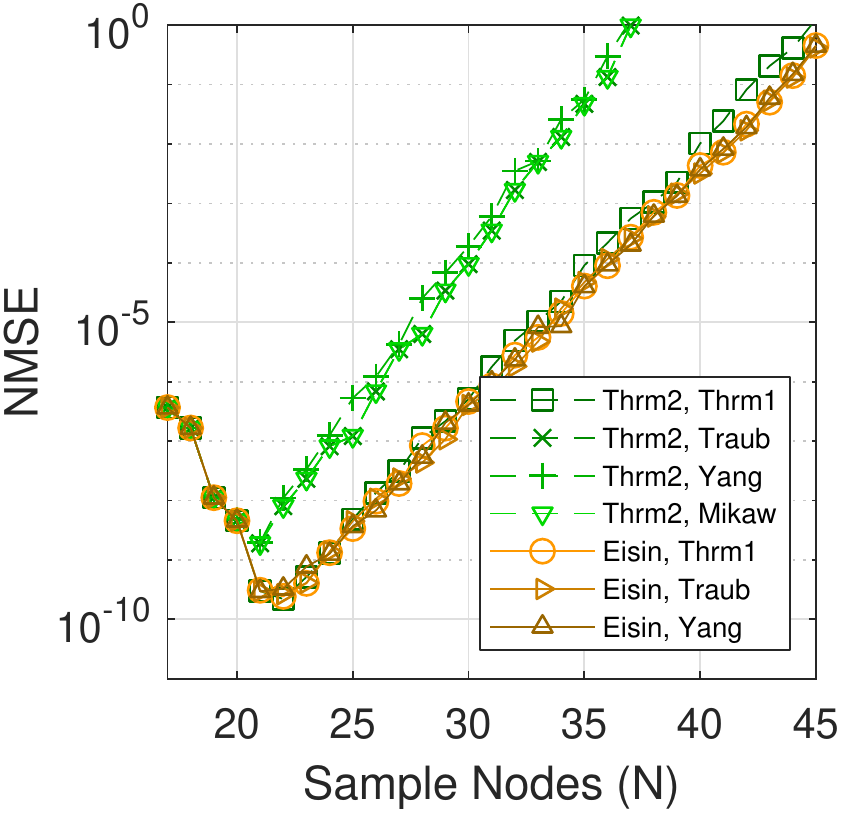}}
\subfloat[$\sin$: Ext-Cheby]{\label{fig:NMSE_extended_chebyshev_sin}\includegraphics[width=0.2\linewidth]{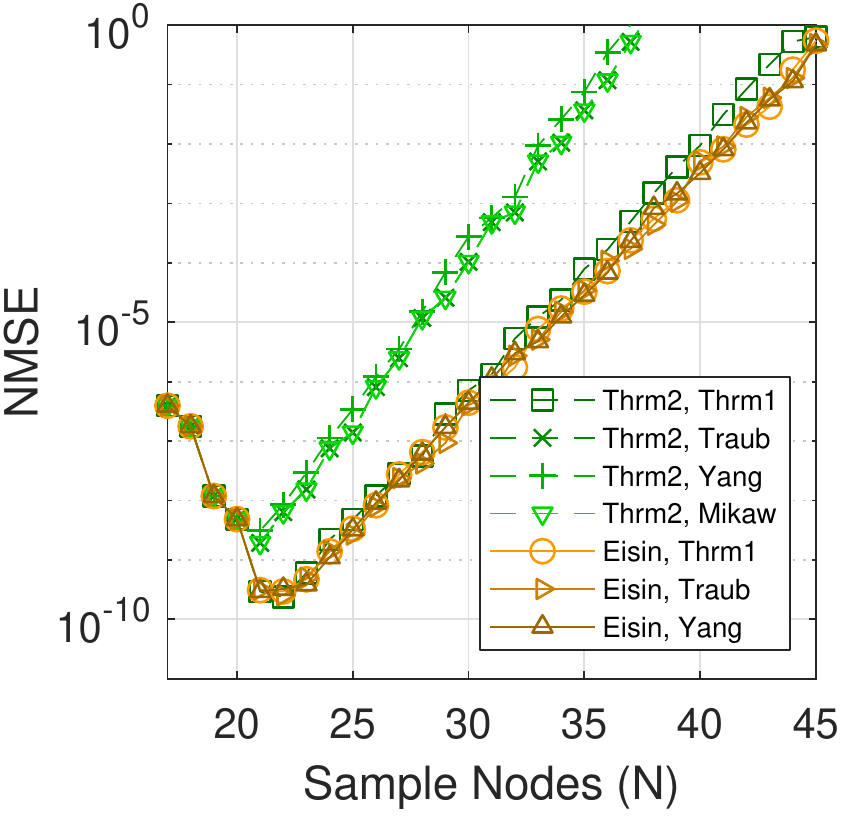}}
\subfloat[$\sin$: Gauss-Lob]{\label{fig:NMSE_gauss_lobbatto_sin}\includegraphics[width=0.2\linewidth]{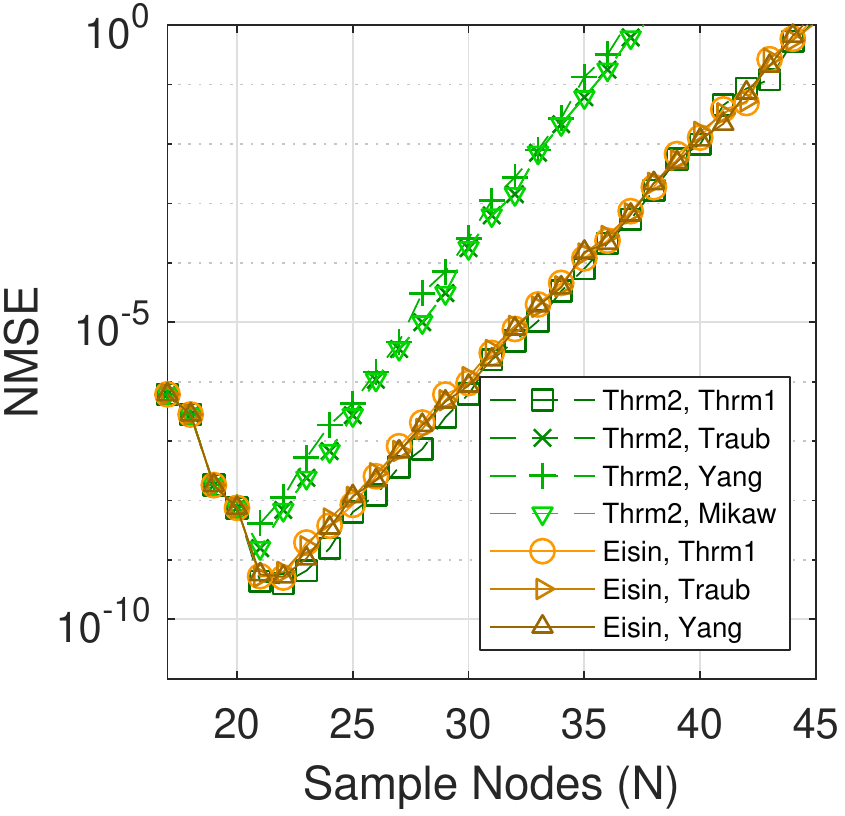}}
\subfloat[$\sin$: $N$th-roots]{\label{fig:NMSE_fekete_sin}\includegraphics[width=0.2\linewidth]{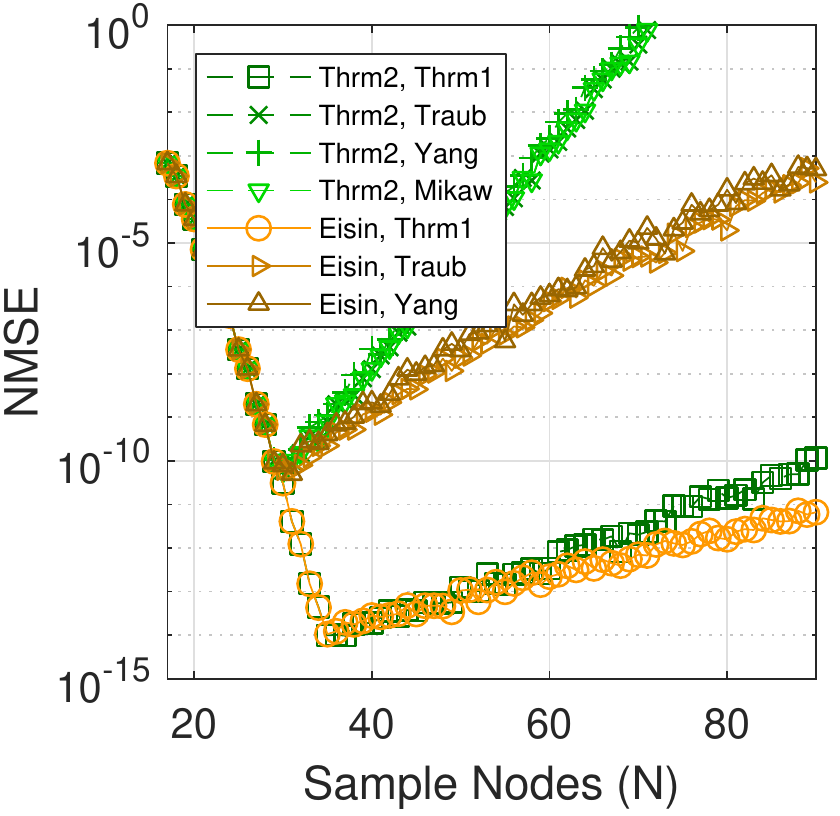}} \\
\subfloat[$\tanh$: Equid]{\label{fig:NMSE_equidistant_tanh}\includegraphics[width=0.2\linewidth]{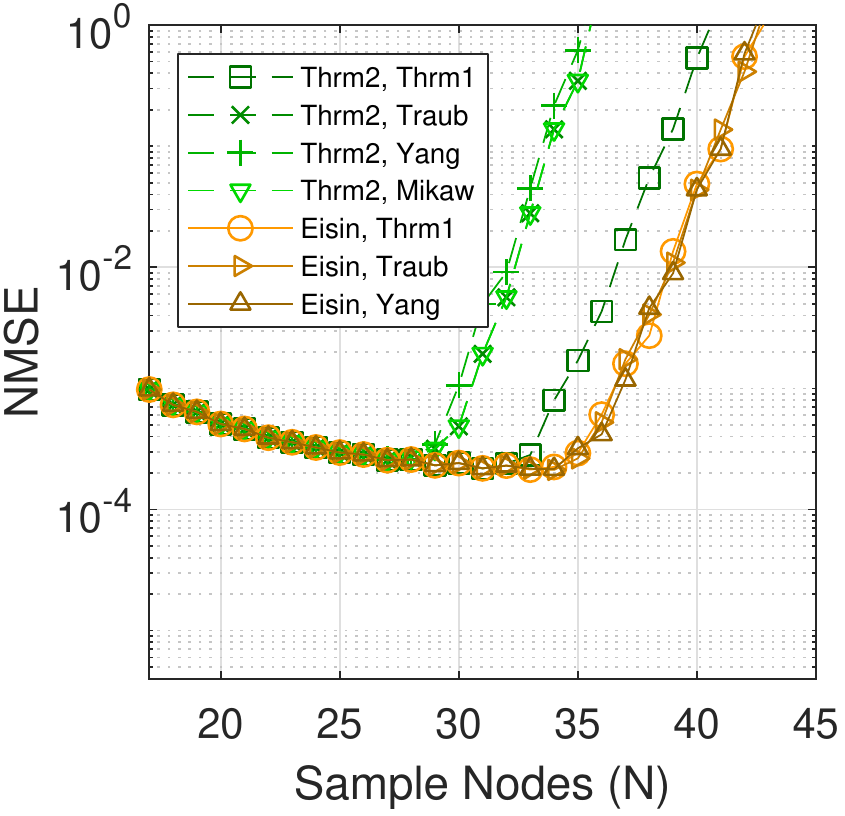}}
\subfloat[$\tanh$: Cheby]{\label{fig:NMSE_chebyshev_tanh}\includegraphics[width=0.2\linewidth]{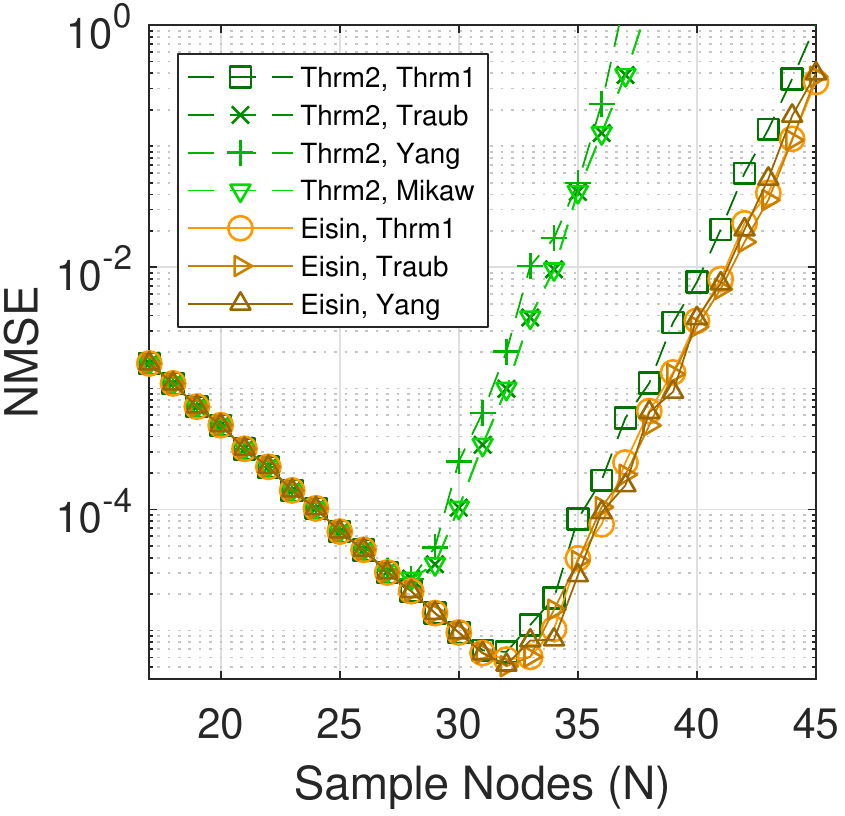}}
\subfloat[$\tanh$: Ext-Cheby]{\label{fig:NMSE_extended_chebyshev_tanh}\includegraphics[width=0.2\linewidth]{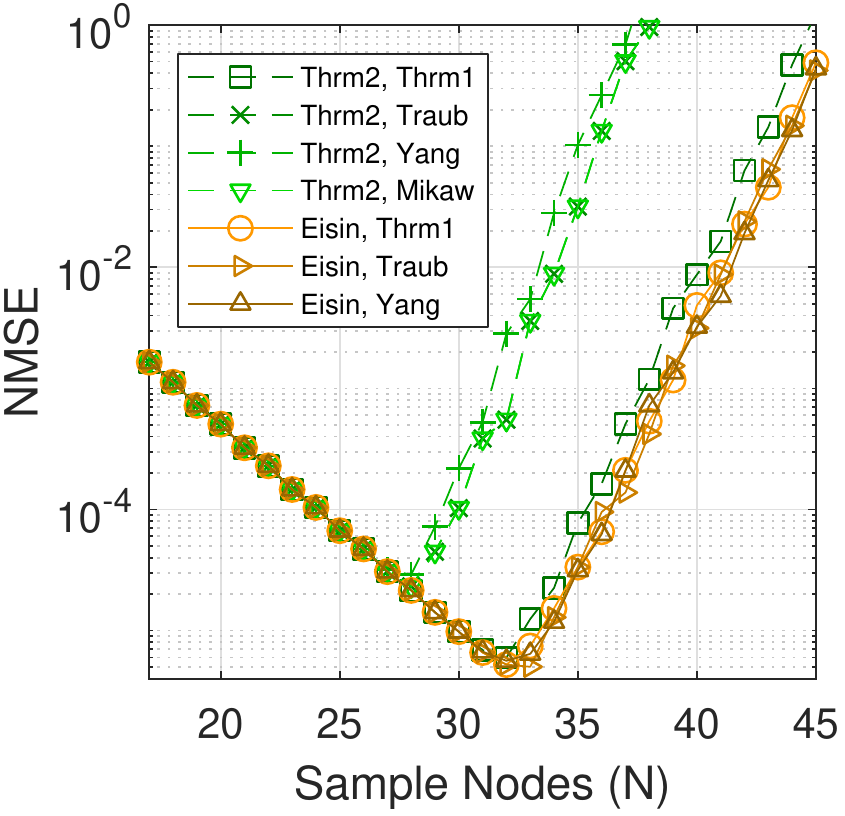}}
\subfloat[$\tanh$: Gauss-Lob]{\label{fig:NMSE_gauss_lobbatto_tanh}\includegraphics[width=0.2\linewidth]{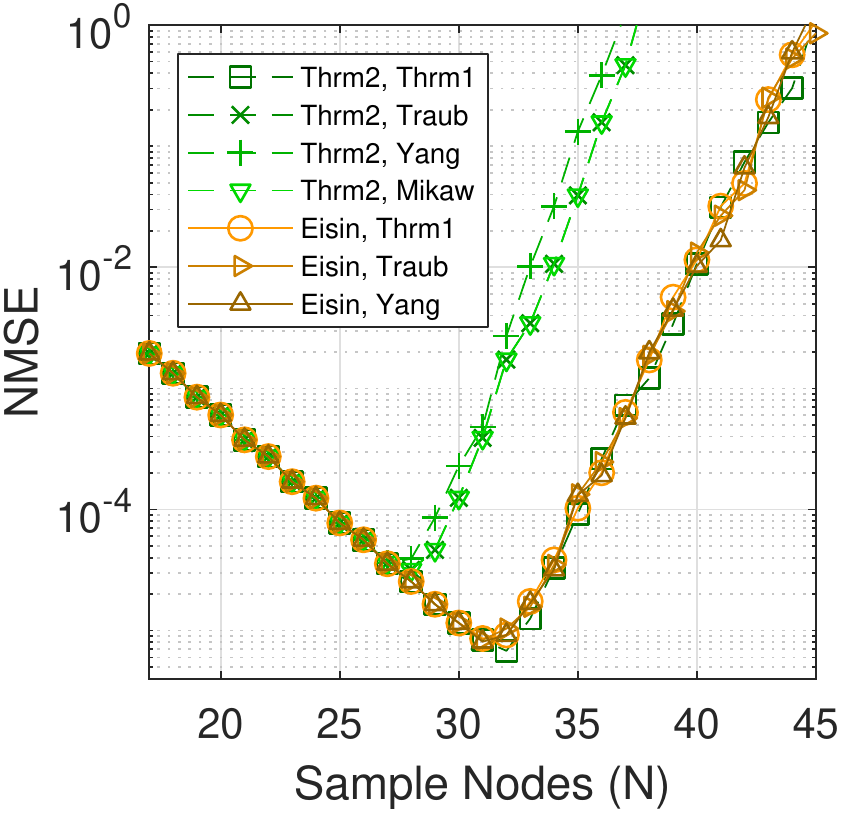}}
\subfloat[$\tanh$: $N$th-roots]{\label{fig:NMSE_fekete_tanh}\includegraphics[width=0.2\linewidth]{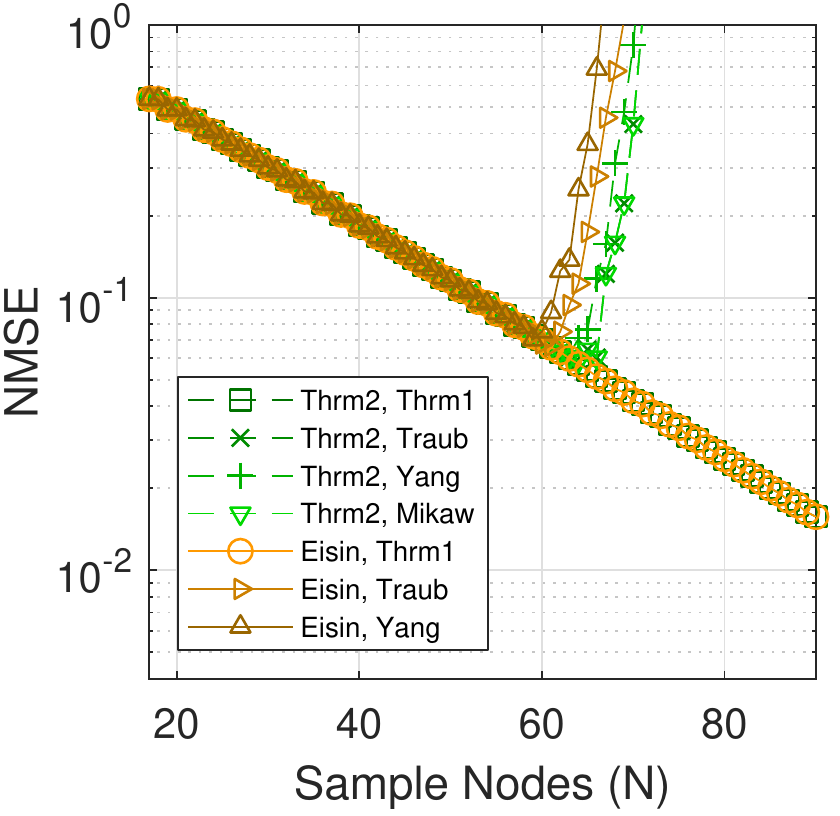}} \\
\subfloat[$\exp$: Equid]{\label{fig:NMSE_equidistant_exp}\includegraphics[width=0.2\linewidth]{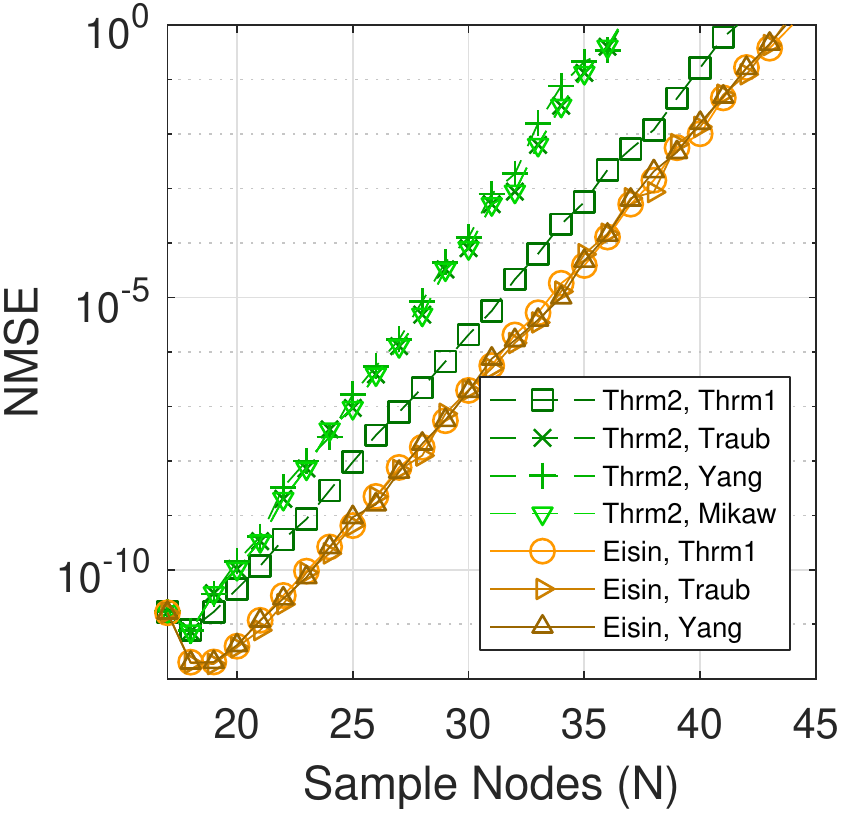}}
\subfloat[$\exp$: Cheby]{\label{fig:NMSE_chebyshev_exp}\includegraphics[width=0.2\linewidth]{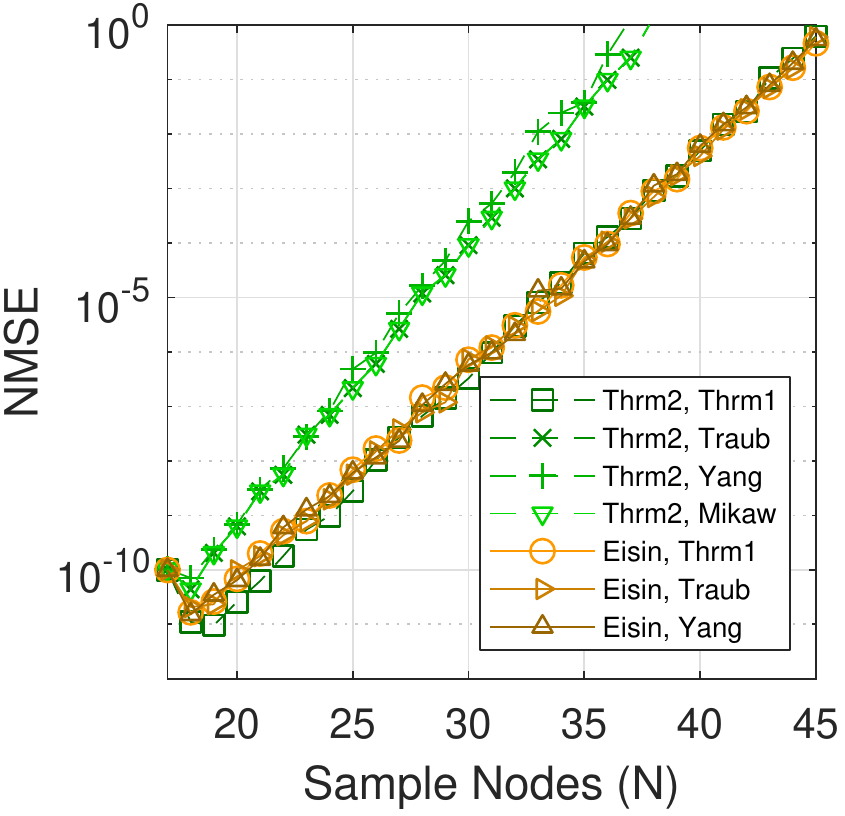}}
\subfloat[$\exp$: Ext-Cheby]{\label{fig:NMSE_extended_chebyshev_exp}\includegraphics[width=0.2\linewidth]{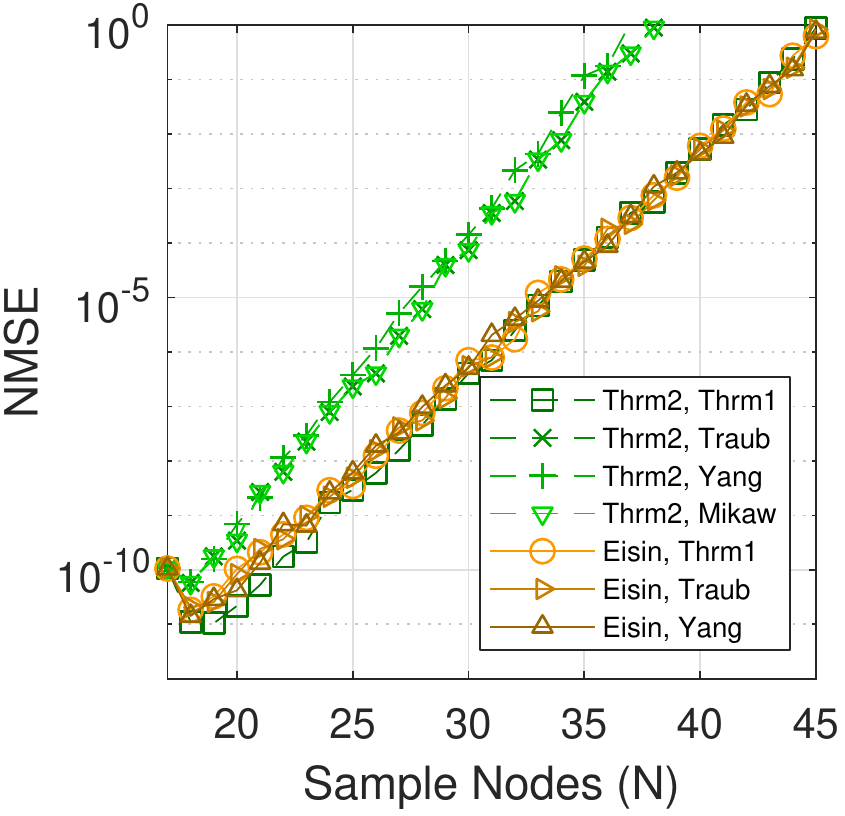}}
\subfloat[$\exp$: Gauss-Lob]{\label{fig:NMSE_gauss_lobbatto_exp}\includegraphics[width=0.2\linewidth]{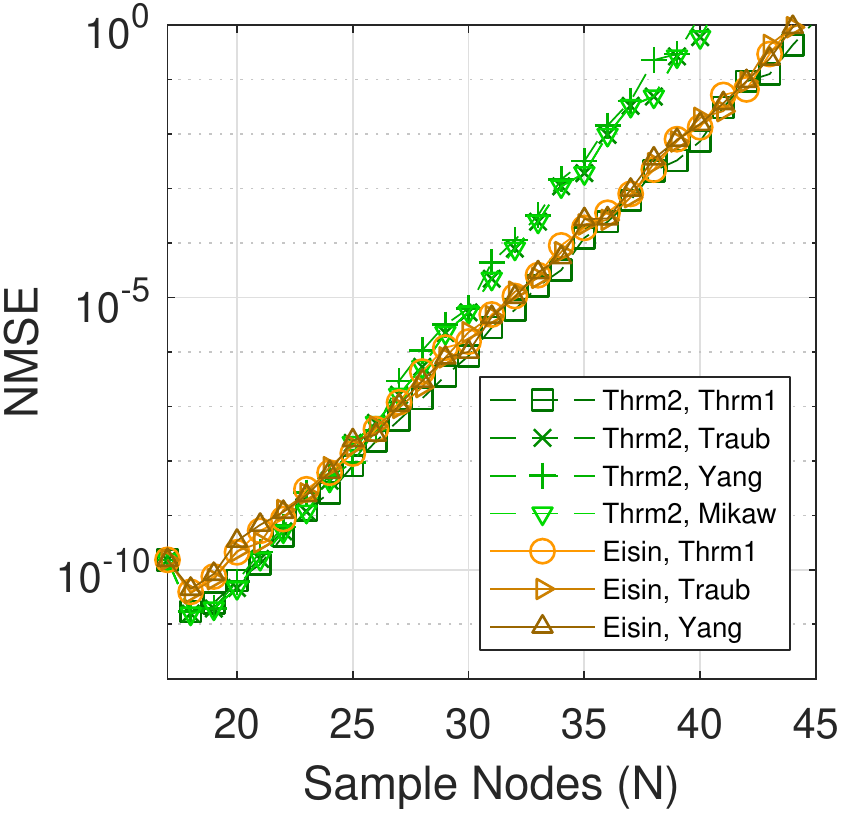}}
\subfloat[$\exp$: $N$th-roots]{\label{fig:NMSE_fekete_exp}\includegraphics[width=0.2\linewidth]{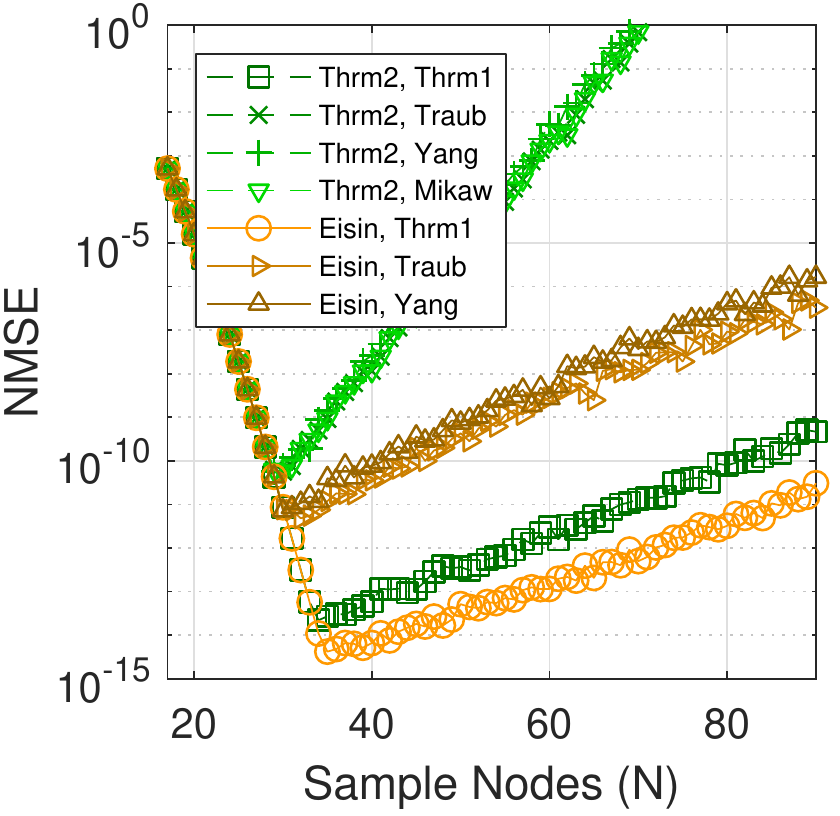}} \\
\caption{NMSE performance on interpolating three different signal samples (i.e. $\sin$, $\tanh$, and $\exp$) by means of different Vandermonde inverse calculation methods. The sampling nodes are obtained by different design i.e. equidistant, Chebyshev, extended-Chebyshev, Gauss-Lobbato, and $N$th-roots of unity on complex circle. }\label{fig:nmse_samples}
\end{figure}

Examples of interpolation results using sample measures from three analytical functions are shown in \cref{fig:plot_samples}. Notice the high deviations from true signal profile using the Eisinberg's approach in $N$th roots of unity sample nodes compared to our proposed solution.

\begin{figure}[tbhp]
\centering 
\subfloat[{\scriptsize{Equid: $1.38\mathrm{e}{-1}$/$9.1\mathrm{e}{-2}$}}]{\label{fig:plot_equidistant_sin}\includegraphics[width=0.32\linewidth]{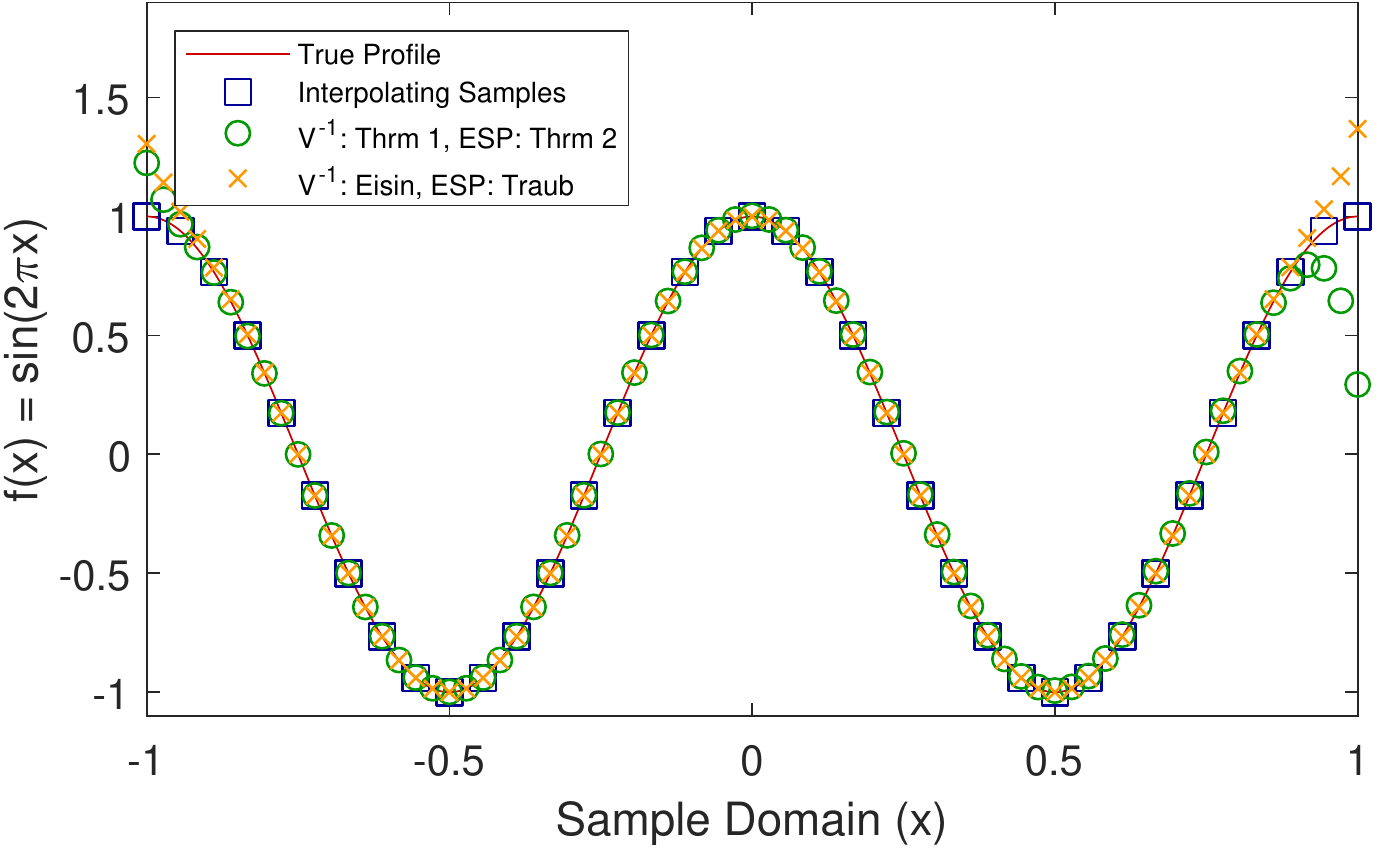}}
\subfloat[{\scriptsize{Cheby: $6.49\mathrm{e}{-4}$/$6.72\mathrm{e}{-5}$}}]{\label{fig:plot_chebyshev_sin}\includegraphics[width=0.32\linewidth]{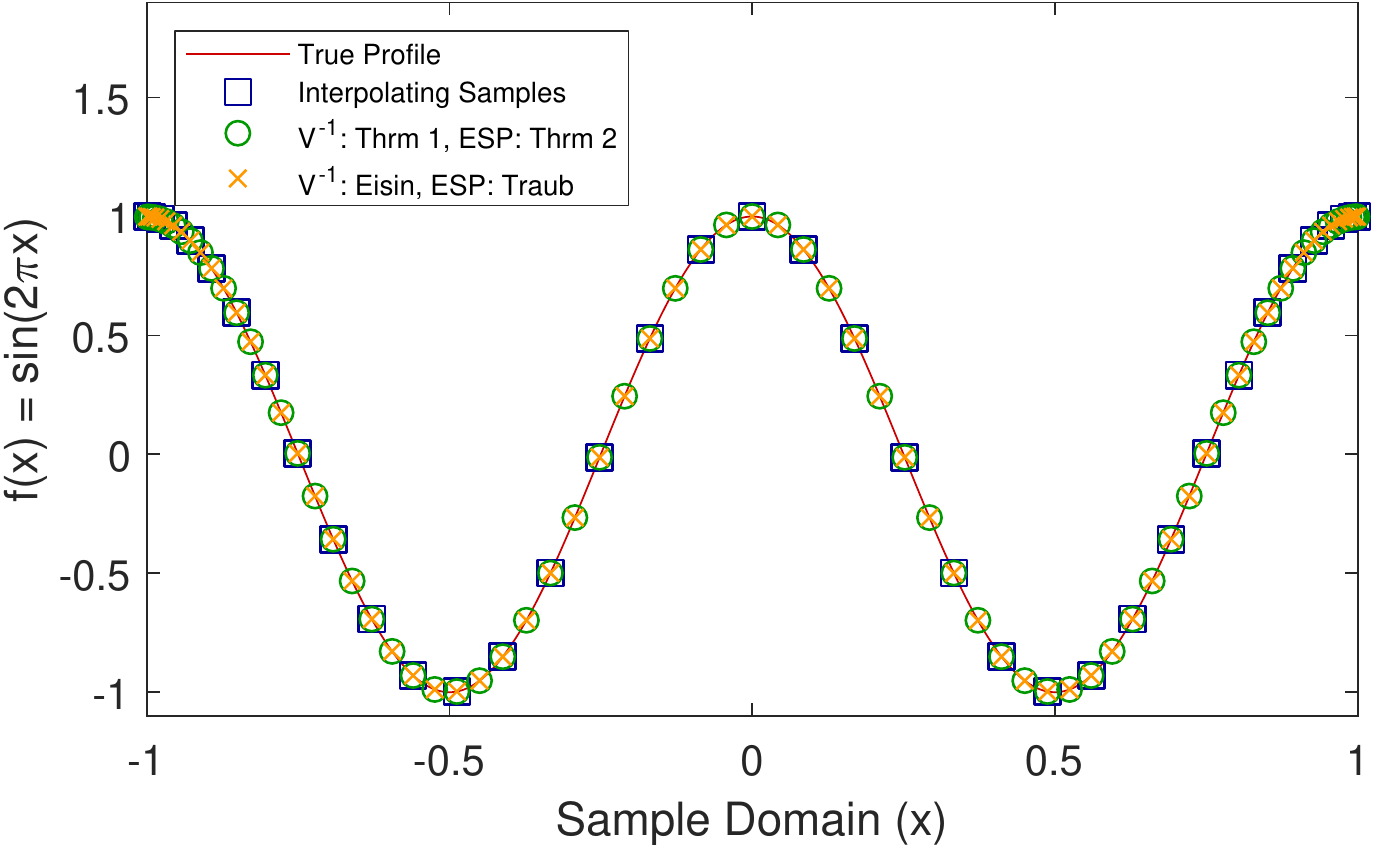}}
\subfloat[{\scriptsize{Nth-Roots: $5.74\mathrm{e}{-15}$/$5.46\mathrm{e}{-7}$}}]{\label{fig:plot_Nth_roots_sin}\includegraphics[width=0.36\linewidth]{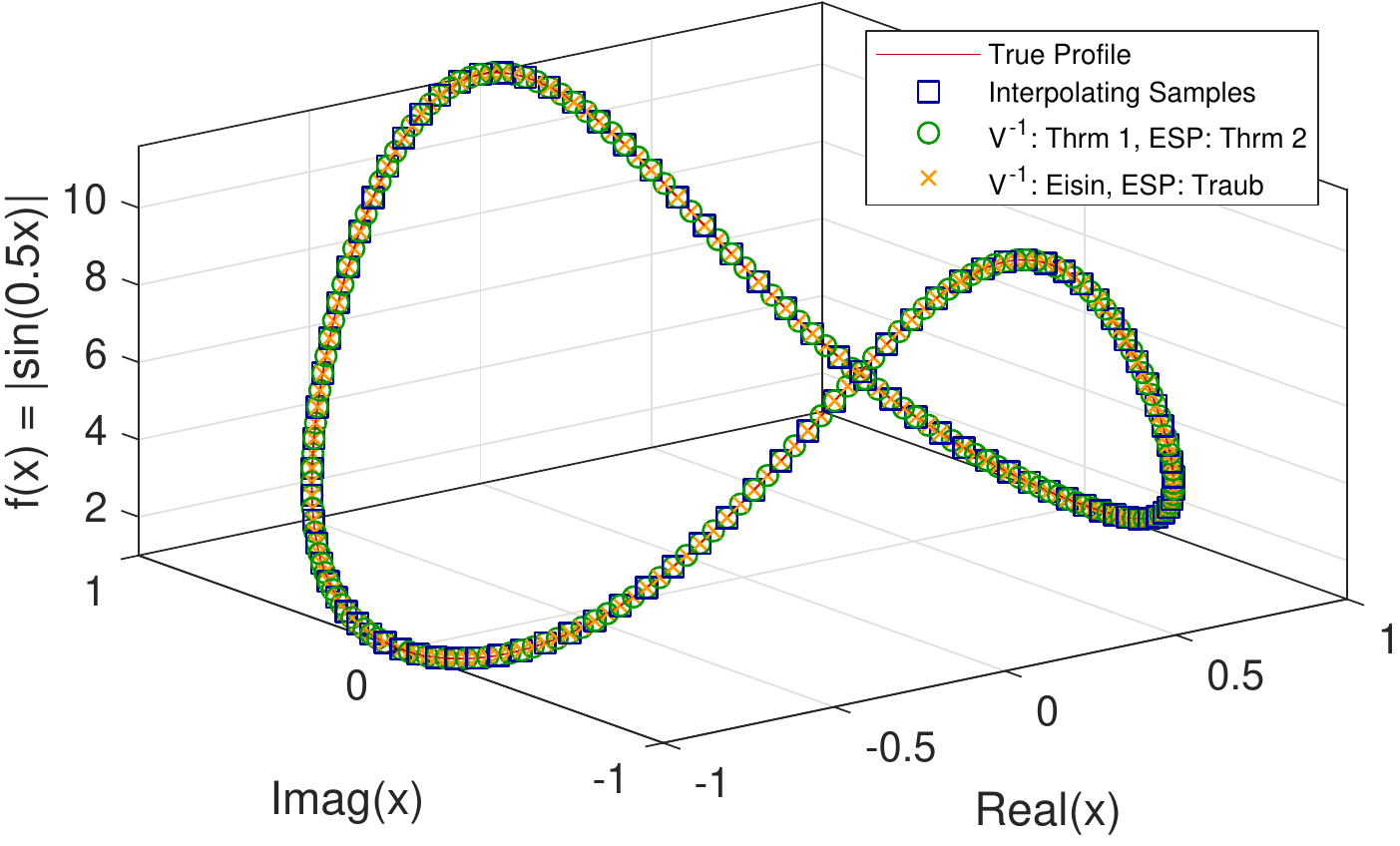}} \\
\subfloat[{\scriptsize{Equid: $5.24\mathrm{e}{-1}$/$2.83\mathrm{e}{-1}$}}]{\label{fig:plot_equidistant_tanh}\includegraphics[width=0.32\linewidth]{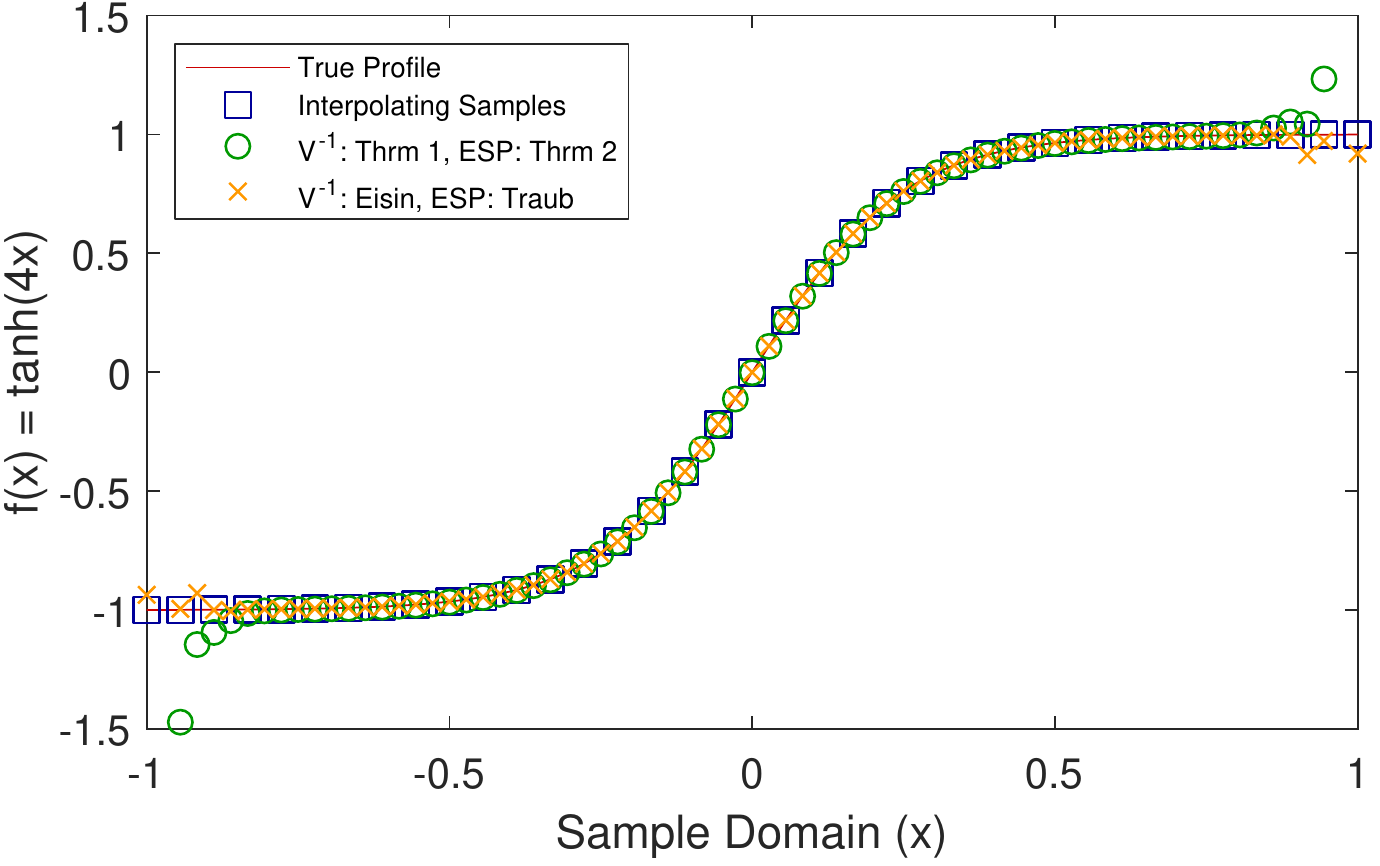}}
\subfloat[{\scriptsize{Ext-Cheby: $6.4\mathrm{e}{-4}$/$2.62\mathrm{e}{-5}$}}]{\label{fig:plot_extended_chebyshev_tanh}\includegraphics[width=0.32\linewidth]{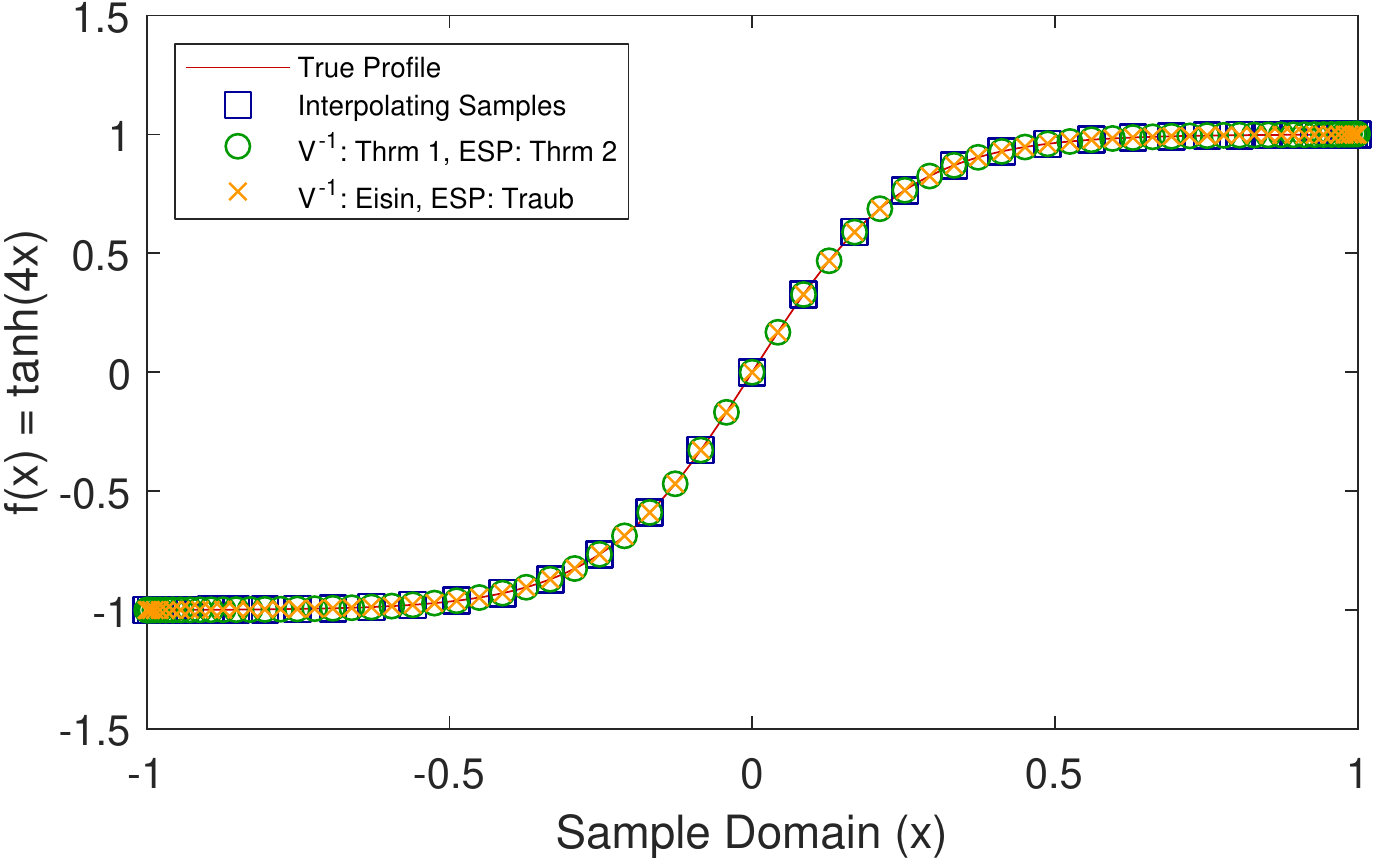}}
\subfloat[{\scriptsize{Nth-Roots: $4.09\mathrm{e}{-4}$/$4.52\mathrm{e}{-1}$}}]{\label{fig:plot_extended_Nth_roots_tanh}\includegraphics[width=0.36\linewidth]{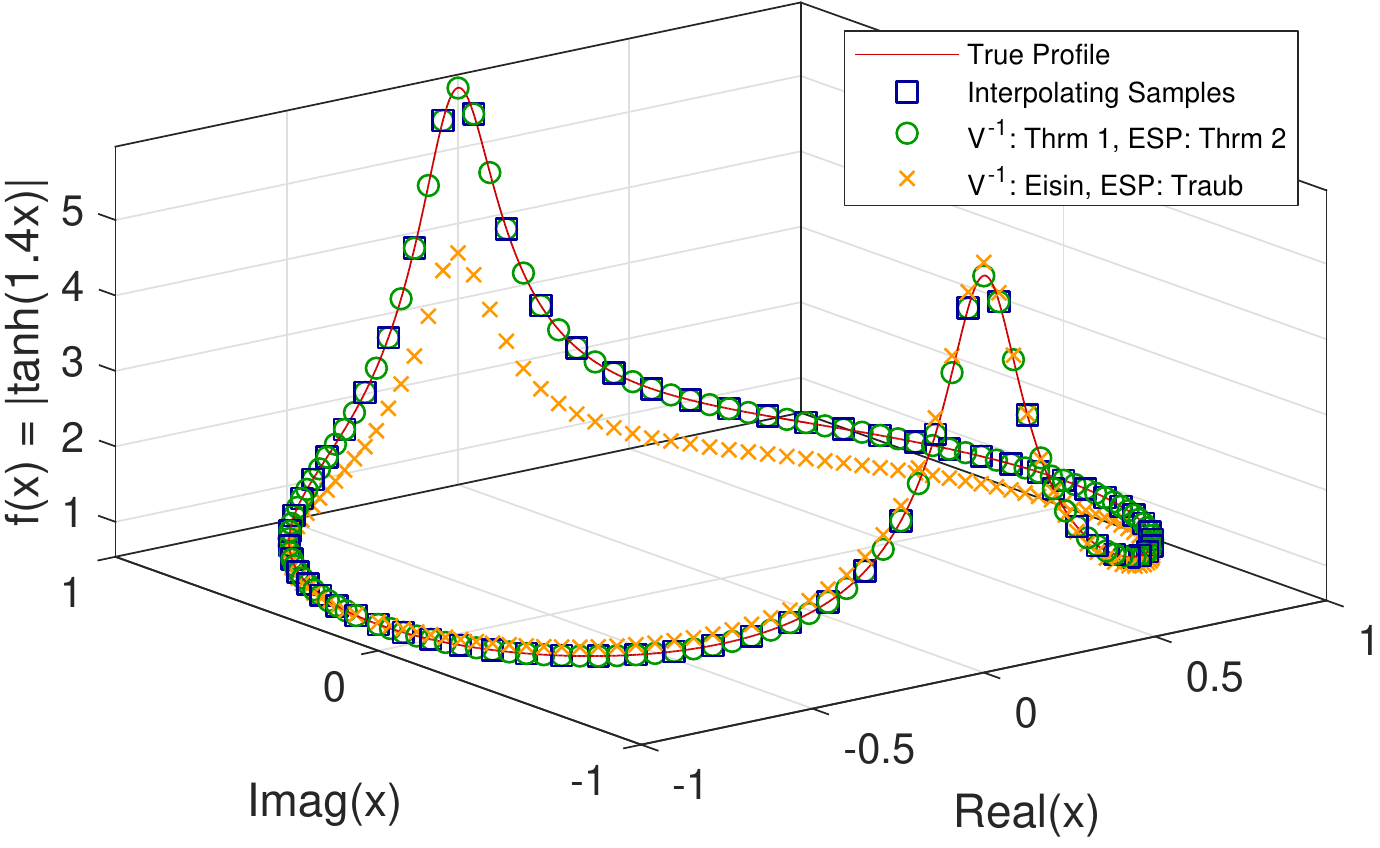}}\\
\subfloat[{\scriptsize{Equid:$3.01\mathrm{e}{-2}$/$1.11\mathrm{e}{-2}$}}]{\label{fig:plot_equidistant_exp}\includegraphics[width=0.32\linewidth]{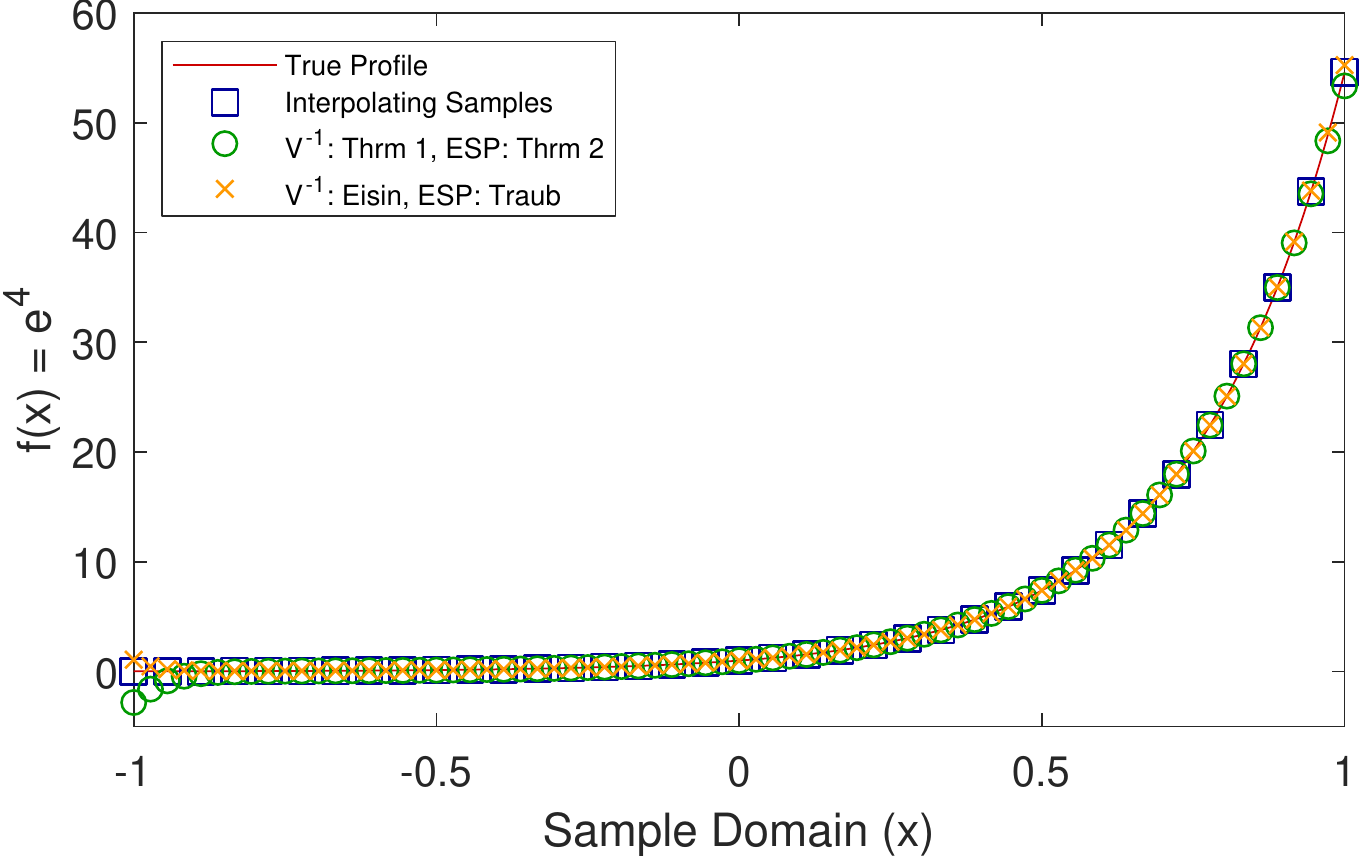}}
\subfloat[{\scriptsize{Gauss-Lob: $6.78\mathrm{e}{-4}$/$2.23\mathrm{e}{-4}$}}]{\label{fig:plot_gauss_lobbatto_exp}\includegraphics[width=0.32\linewidth]{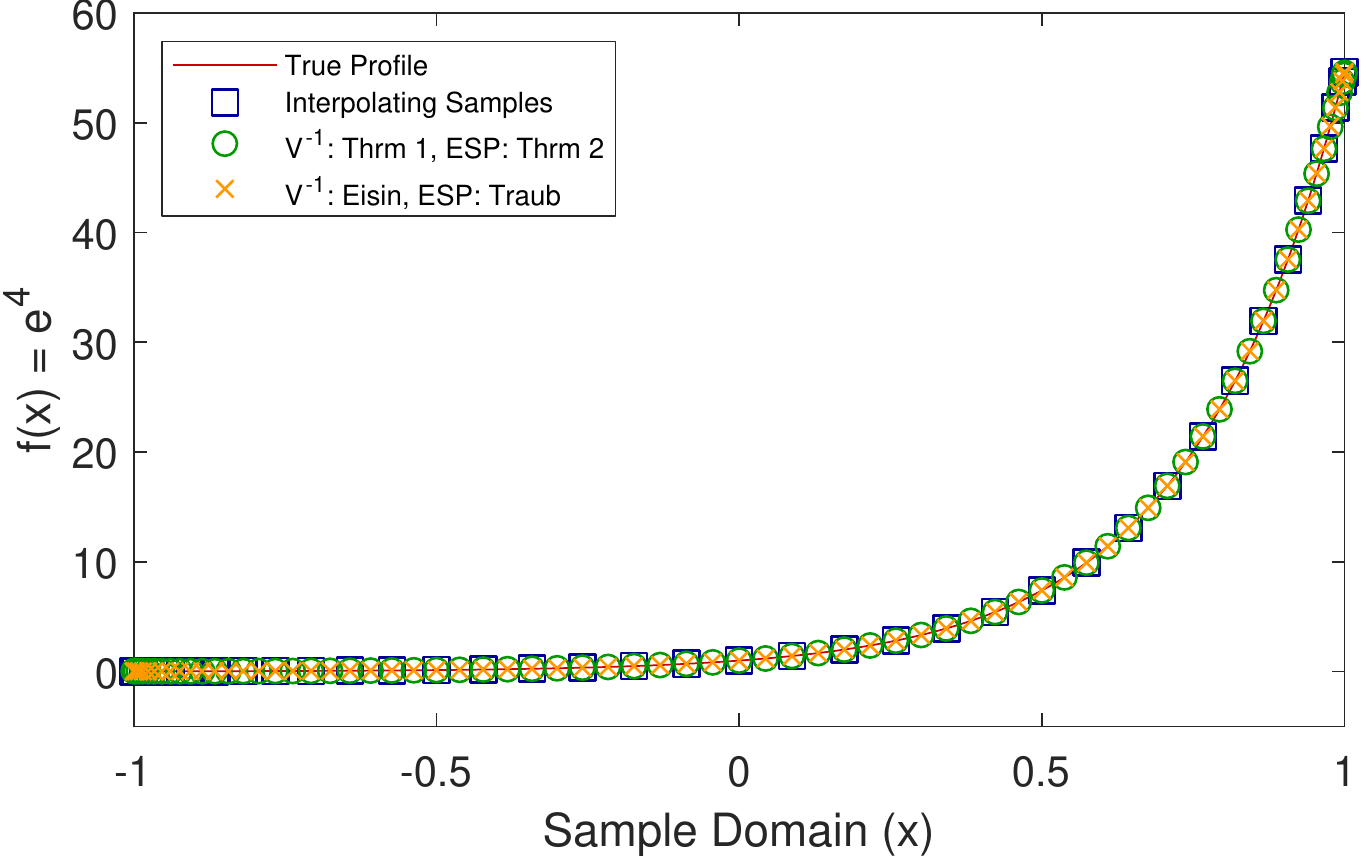}}
\subfloat[{\scriptsize{Nth-Roots: $4.13\mathrm{e}{-15}$/$1.73\mathrm{e}{-6}$}}]{\label{fig:plot_Nth_roots_exp}\includegraphics[width=0.36\linewidth]{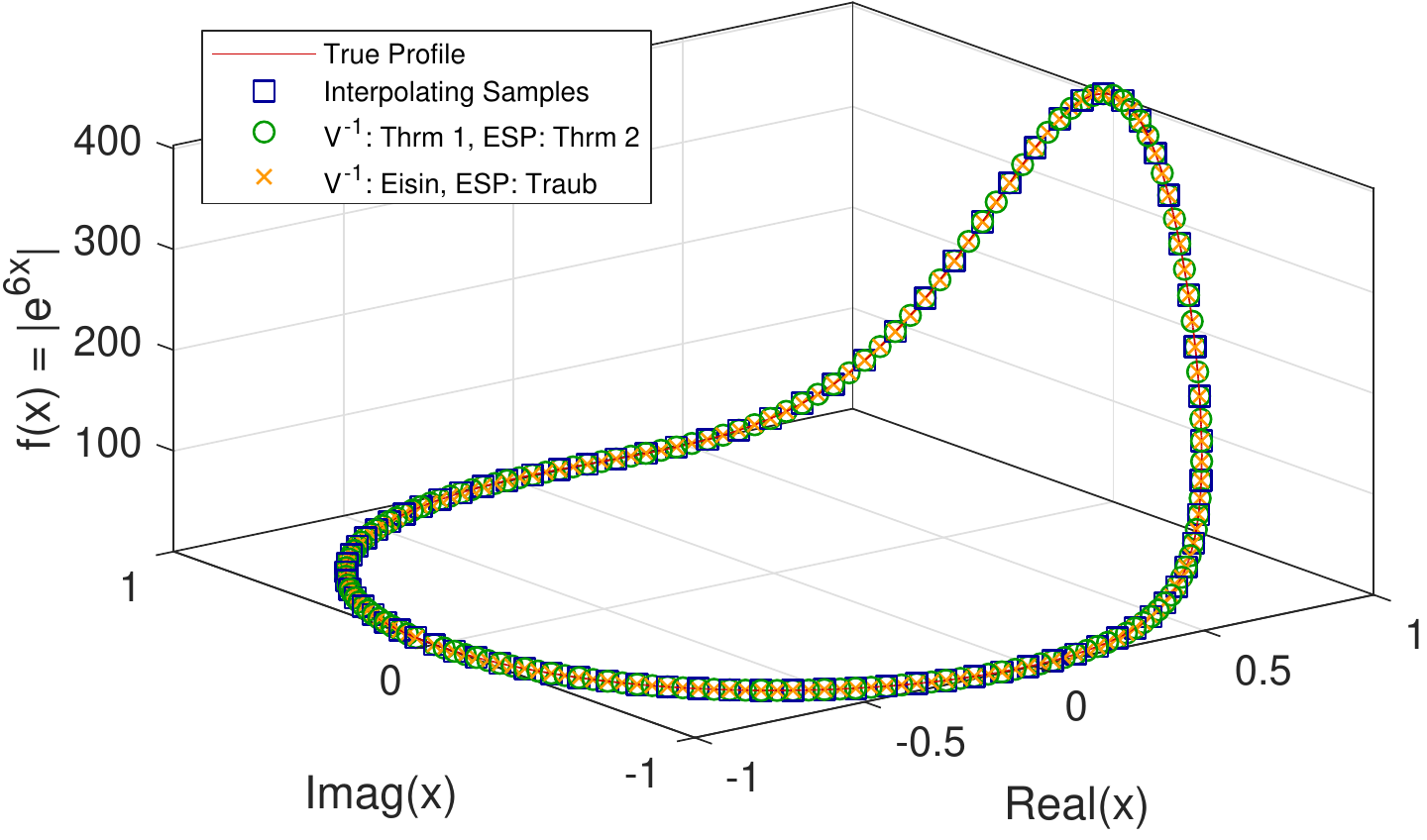}}\\
\caption{Analytical function interpolation using two different Vandermonde inversion methods, i.e. proposed vs Eisinberg \cite{EisinbergFedele2006}, using different sampling nodes. Number of samples used for equidistant, Chebyshev, extended-Chebyshev and Gauss-Lobbato nodes is $37$ for all analytical function. The numbers of samples using $Nth$-roots of unity for $\sin$, $\tanh$, and $\exp$ are $100$, $70$, and $85$, respectively. The NMSEs are stated in each caption of experiment for both proposed/Eisinberg's methods.}\label{fig:plot_samples}
\end{figure}

Overall, both of our inverse Vandermonde and ESP calculation methods, not only outperforms the existing state-of-the-art numerical solutions, but also integrating our ESP method on other inverse calculation methods might give some further boost for estimation. However, the trade off to the proposed inverse solution is the computation time as it must call the ESP solution for every row. This iterative nature magnifies the computation time of simple recursive functions. Summarized in a plot in \cref{fig:plot_Computation} the proposed inverse falls behind in computational efficiency past $30$ samples.

\begin{figure}[tbhp]
\centering 
\subfloat[Eisinberg's Method ]{\label{fig:EisinComputation}\includegraphics[width=0.3\linewidth]{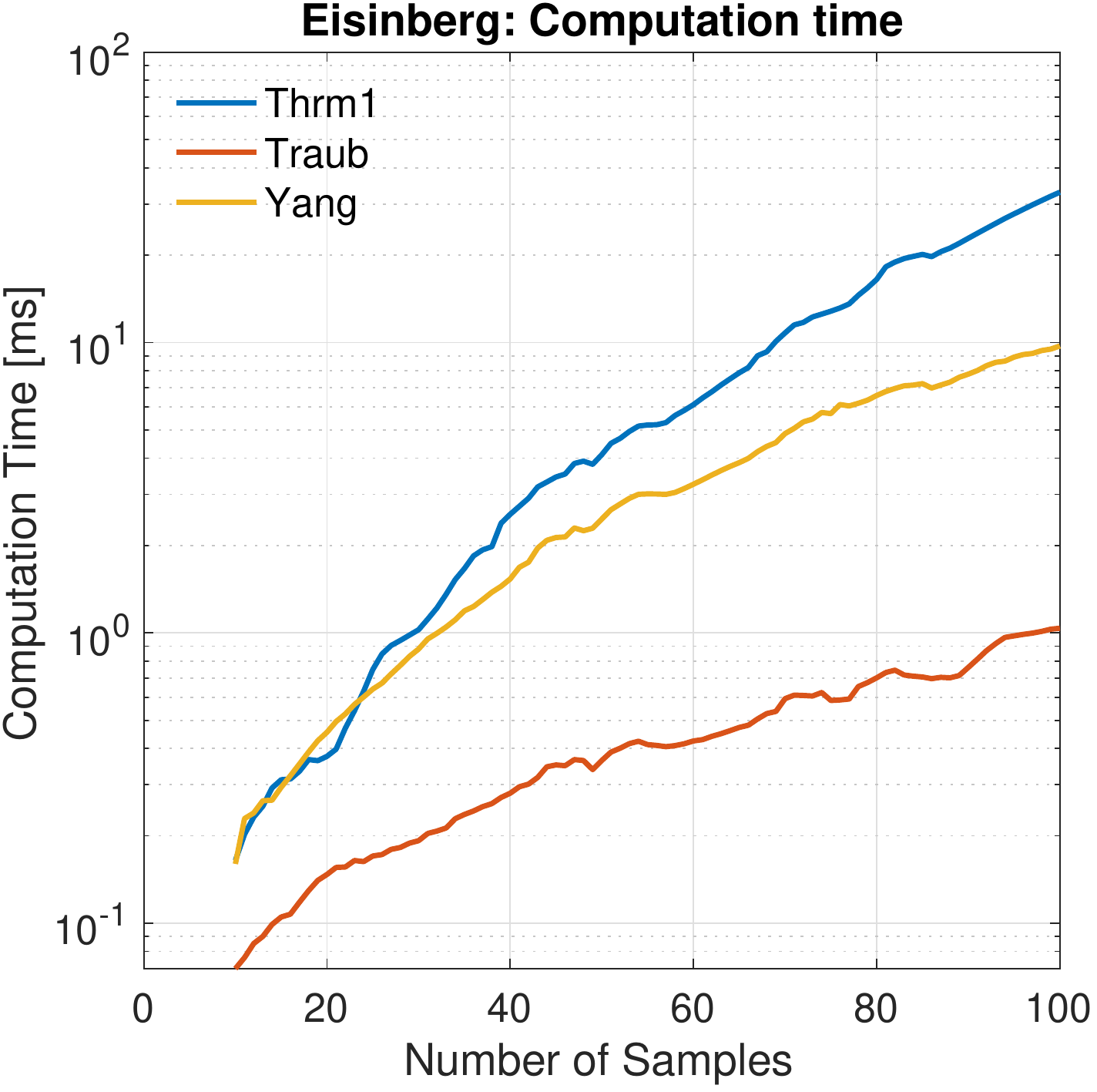}}
\subfloat[Proposed Method]{\label{fig:ProposedComputation}\includegraphics[width=0.3\linewidth]{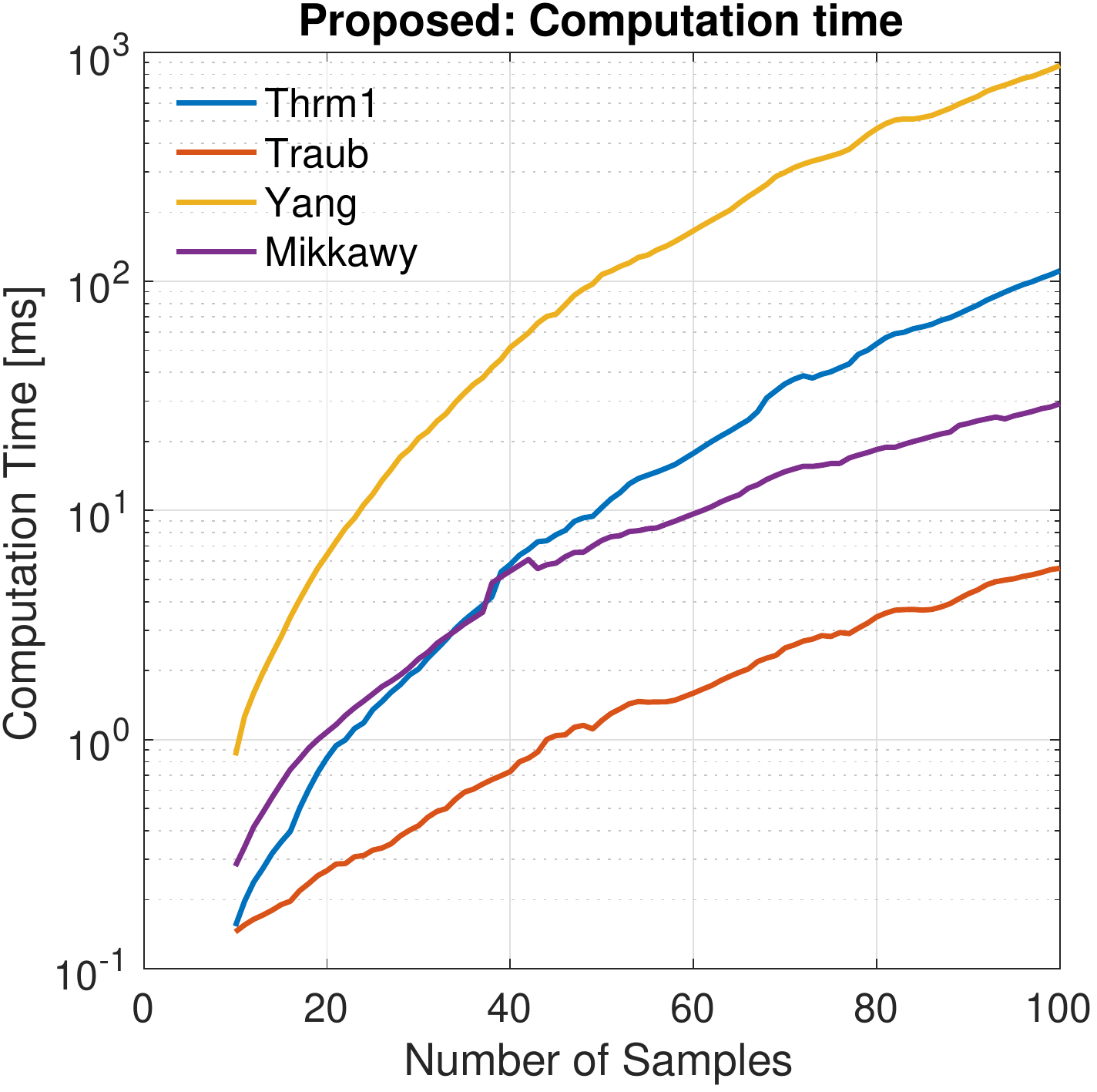}}
\caption{Computation time for inverse Vandermonde calculation using different sample nodes.}
\label{fig:plot_Computation}
\end{figure}

\section{Conclusions}\label{sec:conclusions}
While there is a growing interest in solving Vadermonde equation system in many applied science problems such as in super-resolution, spectral analysis and cryptography, the lack of well generalizable, numerically stable, and accurate inverse solution is yet to be discovered. In this paper, we have presented a framework to unify this mounting needs to solve the inverse for any arbitrary Vandermonde matrix in a close-form. In particular, we made two contributions by (a) expressing the elementary symmetric polynomial in a recursive summation that only takes $\mathcal{O}(N)$ for computational complexity; and (b) obtain a closed-form solution to the inverse Vandermonde matrix developed based on the partial-fraction technique. We showed that not only our proposed method can generalize into any arbitrary sampling nodes, but also demonstrate significant stability and accuracy on the $N$th roots of unity samples on the complex plane. Our results can be of great interest to the researchers in the field of super-resolution e.g. \cite{candes2014towards, superResolution, batenkov2019super, batenkov2019rethinking}, where the robustness and accuracy of our numerical solution can greatly impact the recovery errors. We further demonstrated the utility of proposed inverse method on one-dimensional signal interpolation under different sampling scenarios using equidistant, Chebyshev, Gauss-Lobbato, and $N$th roots of unity nodes. The proposed method is clearly applicable to fairly arbitrary sampling nodes.

%\cleardoublepage
%\appendix
%\section{More performance data} 
%I could post a bunch of NMSE data for other nodes, real 1D signal interpolations, etc
%\begin{lemma}
%Test Lemma.
%\end{lemma}

%\section*{Acknowledgments}
%We would like to acknowledge the assistance of ..

\bibliographystyle{siamplain}
\bibliography{references}

\begin{thebibliography}{10}

\bibitem{youness_2016}
{\sc Y.~Aliyari and J.~Chen}, {\em A recursive algorithm for computing the
  inverse of the vandermonde matrix}, Cogent Engineering, 3 (2016).

\bibitem{batenkov2019rethinking}
{\sc D.~Batenkov, A.~Bhandari, and T.~Blu}, {\em Rethinking super-resolution:
  the bandwidth selection problem}, in ICASSP 2019-2019 IEEE International
  Conference on Acoustics, Speech and Signal Processing (ICASSP), IEEE, 2019,
  pp.~5087--5091.

\bibitem{batenkov2018conditioning}
{\sc D.~Batenkov, L.~Demanet, G.~Goldman, and Y.~Yomdin}, {\em Conditioning of
  partial nonuniform fourier matrices with clustered nodes}, arXiv preprint
  arXiv:1809.00658,  (2018).

\bibitem{batenkov2019super}
{\sc D.~Batenkov, G.~Goldman, and Y.~Yomdin}, {\em Super-resolution of
  near-colliding point sources}, arXiv preprint arXiv:1904.09186,  (2019).

\bibitem{bazan2000conditioning}
{\sc F.~S. Baz{\'a}n}, {\em Conditioning of rectangular vandermonde matrices
  with nodes in the unit disk}, SIAM Journal on Matrix Analysis and
  Applications, 21 (2000), pp.~679--693.

\bibitem{stirlingInv}
{\sc C.~Bender, D.~Brody, and B.~Meister}, {\em Inverse of a vandermonde
  matrix},  (2002).

\bibitem{bjorck_1970}
{\sc A.~Bj{\"o}rck and V.~R. Pereyra}, {\em Solution of vandermonde systems of
  equations}, 1970.

\bibitem{candes2014towards}
{\sc E.~J. Cand{\`e}s and C.~Fernandez-Granda}, {\em Towards a mathematical
  theory of super-resolution}, Communications on pure and applied Mathematics,
  67 (2014), pp.~906--956.

\bibitem{chen1981new}
{\sc C.~Chen and K.~Leung}, {\em A new look at partial fraction expansion from
  a high-level language viewpoint}, Computers \& Mathematics with Applications,
  7 (1981), pp.~361--367.

\bibitem{csaki_1975}
{\sc F.~{Csaki}}, {\em Some notes on the inversion of confluent vandermonde
  matrices}, IEEE Transactions on Automatic Control, 20 (1975), pp.~154--157.

\bibitem{demmel2000accurate}
{\sc J.~Demmel}, {\em Accurate singular value decompositions of structured
  matrices}, SIAM Journal on Matrix Analysis and Applications, 21 (2000),
  pp.~562--580.

\bibitem{dick_1945}
{\sc L.~Dickson}, {\em Relations Between Roots and Coefficients}, John Wiley \&
  Sons, Inc., New York, NY, 1945, p.~18.

\bibitem{drmavc2018data}
{\sc Z.~Drma{\v{c}}, I.~Mezi{\'c}, and R.~Mohr}, {\em Data driven koopman
  spectral analysis in vandermonde-cauchy form via the dft: numerical method
  and theoretical insights}, arXiv preprint arXiv:1808.09557,  (2018).

\bibitem{cheby_facts}
{\sc A.~Eisinberg and G.~Fedele}, {\em Vandermonde systems on gauss–lobatto
  chebyshev nodes}, Applied Mathematics and Computation, 170 (2005), pp.~633 --
  647.

\bibitem{EisinbergFedele2006}
{\sc A.~Eisinberg and G.~Fedele}, {\em On the inversion of the vandermonde
  matrix}, Applied Mathematics and Computation, 174 (2006), pp.~1384 -- 1397.

\bibitem{eisinberg_1998}
{\sc A.~Eisinberg, G.~Franz{\'e}, and P.~Pugliese}, {\em Vandermonde matrices
  on integer nodes}, Numerische Mathematik, 80 (1998), pp.~75--85.

\bibitem{eisin_1981}
{\sc A.~Eisinberg and C.~Picardi}, {\em On the inversion of vandermonde
  matrix}, IFAC Proceedings Volumes, 14 (1981), pp.~507 -- 511.
\newblock 8th IFAC World Congress on Control Science and Technology for the
  Progress of Society, Kyoto, Japan, 24-28 August 1981.

\bibitem{Mikkawy_2003}
{\sc M.~E.~A. El-Mikkawy}, {\em Explicit inverse of a generalized vandermonde
  matrix}, Appl. Math. Comput., 146 (2003), pp.~643--651.

\bibitem{gautschi1975optimally}
{\sc W.~Gautschi}, {\em Optimally conditioned vandermonde matrices}, Numerische
  Mathematik, 24 (1975), pp.~1--12.

\bibitem{gautschi1990stable}
{\sc W.~Gautschi}, {\em How (un) stable are vandermonde systems}, Asymptotic
  and computational analysis, 124 (1990), pp.~193--210.

\bibitem{ill_posed2}
{\sc W.~Gautschi and G.~Inglese}, {\em Lower bounds for the condition number of
  vandermonde matrices}, Numerische Mathematik, 52 (1987), pp.~241--250.

\bibitem{hosseini2017finite}
{\sc M.~S. Hosseini and K.~N. Plataniotis}, {\em Finite differences in forward
  and inverse imaging problems: Maxpol design}, SIAM Journal on Imaging
  Sciences, 10 (2017), pp.~1963--1996.

\bibitem{hua1990matrix}
{\sc Y.~Hua and T.~K. Sarkar}, {\em Matrix pencil method for estimating
  parameters of exponentially damped/undamped sinusoids in noise}, IEEE
  Transactions on Acoustics, Speech, and Signal Processing, 38 (1990),
  pp.~814--824.

\bibitem{kaufman_1969}
{\sc I.~{Kaufman}}, {\em The inversion of the vandermonde matrix and
  transformation to the jordan canonical form}, IEEE Transactions on Automatic
  Control, 14 (1969), pp.~774--777.

\bibitem{klinger_1967}
{\sc A.~Klinger}, {\em The vandermonde matrix}, The American Mathematical
  Monthly, 74 (1967), pp.~571--574, \url{http://www.jstor.org/stable/2314898}.

\bibitem{kunis2018condition}
{\sc S.~Kunis and D.~Nagel}, {\em On the condition number of vandermonde
  matrices with pairs of nearly-colliding nodes}, arXiv preprint
  arXiv:1812.08645,  (2018).

\bibitem{lacan2004systematic}
{\sc J.~Lacan and J.~Fimes}, {\em Systematic mds erasure codes based on
  vandermonde matrices}, IEEE Communications Letters, 8 (2004), pp.~570--572.

\bibitem{li2018direct}
{\sc Q.~Li, B.~Wu, and Z.~Liu}, {\em Direct constructions of (involutory) mds
  matrices from block vandermonde and cauchy-like matrices}, in International
  Workshop on the Arithmetic of Finite Fields, Springer, 2018, pp.~275--290.

\bibitem{li2017stable}
{\sc W.~Li and W.~Liao}, {\em Stable super-resolution limit and smallest
  singular value of restricted fourier matrices}, arXiv preprint
  arXiv:1709.03146,  (2017).

\bibitem{macon_1958}
{\sc N.~Macon and A.~Spitzbart}, {\em Inverses of vandermonde matrices}, The
  American Mathematical Monthly, 65 (1958), pp.~95--100,
  \url{http://www.jstor.org/stable/2308881}.

\bibitem{man2014inversion}
{\sc Y.-K. Man}, {\em On the inversion of vandermonde matrices}, in Proceedings
  of the World Congress on Engineering, vol.~2, 2014.

\bibitem{superResolution}
{\sc A.~Moitra}, {\em Super-resolution, extremal functions and the condition
  number of vandermonde matrices}, in Proceedings of the Forty-seventh Annual
  ACM Symposium on Theory of Computing, STOC '15, New York, NY, USA, 2015, ACM,
  pp.~821--830.

\bibitem{neagoe_1996}
{\sc V.~. {Neagoe}}, {\em Inversion of the van der monde matrix}, IEEE Signal
  Processing Letters, 3 (1996), pp.~119--120.

\bibitem{ill_posed3}
{\sc V.~Pan}, {\em How bad are vandermonde matrices?}, SIAM Journal on Matrix
  Analysis and Applications, 37 (2016), pp.~676--694.

\bibitem{pantelous_2013}
{\sc A.~A. Pantelous and A.~D. Karageorgos}, {\em Generalized inverses of the
  vandermonde matrix: Applications in control theory}, International Journal of
  Control, Automation and Systems, 11 (2013), pp.~1063--1070.

\bibitem{interp_text}
{\sc G.~M. Phillips}, {\em Univariate Interpolation}, Springer New York, New
  York, NY, 2003, pp.~1--48.

\bibitem{solomonCode}
{\sc I.~Reed and G.~Solomon}, {\em Polynomial codes over certain finite
  fields}, Journal of the Society for Industrial and Applied Mathematics, 8
  (1960), pp.~300--304.

\bibitem{sajadieh2012construction}
{\sc M.~Sajadieh, M.~Dakhilalian, H.~Mala, and B.~Omoomi}, {\em On construction
  of involutory mds matrices from vandermonde matrices in gf (2 q)}, Designs,
  Codes and Cryptography, 64 (2012), pp.~287--308.

\bibitem{sarkar1995using}
{\sc T.~K. Sarkar and O.~Pereira}, {\em Using the matrix pencil method to
  estimate the parameters of a sum of complex exponentials}, IEEE Antennas and
  Propagation Magazine, 37 (1995), pp.~48--55.

\bibitem{stanley1964time}
{\sc W.~Stanley}, {\em A time-to-frequency-domain matrix formulation},
  Proceedings of the IEEE, 52 (1964), pp.~874--875.

\bibitem{taher_2016}
{\sc R.~B. Taher and M.~Rachidi}, {\em Solving some generalized vandermonde
  systems and inverse of their associate matrices via new approaches for the
  binet formula}, Applied Mathematics and Computation, 290 (2016), pp.~267 --
  280.

\bibitem{Traub_1966}
{\sc J.~Traub}, {\em Associated polynomials and uniform methods for the
  solution of linear problems}, SIAM Review, 8 (1966), pp.~277--301.

\bibitem{nasa_60}
{\sc L.~Turner}, {\em Inverse of the vandermonde matrix with applications},
  Nasa Technical Note,  (1966).

\bibitem{ill_posed1}
{\sc E.~E. Tyrtyshnikov}, {\em How bad are hankel matrices?}, Numerische
  Mathematik, 67 (1994), pp.~261--269.

\bibitem{werts_1965}
{\sc H.~Wertz}, {\em On the numerical inversion of a recurrent problem: The
  vandermond'e matrix}, Automatic Control, IEEE Transactions on, 10 (1965),
  pp.~492 -- 492.

\bibitem{yaici2019particular}
{\sc M.~Yaici and K.~Hariche}, {\em A particular block vandermonde matrix}, in
  ITM Web of Conferences, vol.~24, EDP Sciences, 2019, p.~01008.

\bibitem{Yang_2009}
{\sc S.~L. Yang and Y.~Y. Jia}, {\em Symmetric polynomial matrices and
  vandermonde matrix}, Indian Journal of Pure and Applied Mathematics, 40
  (2009), pp.~373--388.

\end{thebibliography}
\end{document}